%~
\documentclass[12pt,regno]{amsart}
\usepackage{epsf,epsfig,amsmath}
\usepackage{amssymb,latexsym}
\usepackage{tikz}
\usepackage{amsthm} 
\usepackage{amsfonts}
\usepackage{graphicx,psfrag}

\numberwithin{equation}{section}

\textwidth15.6cm 
\textheight22cm 
\hoffset-1.7cm 
\voffset-.5cm

\newtheorem{thm}{Theorem}[section]
\newtheorem{pro}[thm]{Proposition}
\newtheorem{lem}[thm]{Lemma}

\newtheorem{cor}[thm]{Corollary}

\theoremstyle{definition}
\newtheorem{defi}[thm]{Definition}

\theoremstyle{remark}
\newtheorem{obs}[thm]{Observation}
\newtheorem{rem}[thm]{Remark}
\theoremstyle{definition}
\newtheorem*{ack}{Acknowledgement}

\title[Special matchings in Coxeter groups]{Special matchings in Coxeter groups}

\author{Fabrizio Caselli}\author{Mario Marietti}

\address{Dipartimento di matematica, Universit\`a di Bologna, Piazza di Porta San Donato 5, 40126 Bologna, Italy}
\address{Dipartimento  di Ingegneria Industriale e Scienze Matematiche, Universit\`a Politecnica delle Marche, Via Brecce Bianche, 60131 Ancona,  Italy}

\email{fabrizio.caselli@unibo.it}
\email{m.marietti@univpm.it}

\subjclass[2010]{05E99 (primary), 20F55 (secondary)}
\keywords{Bruhat order, Coxeter groups, Special matchings}

\begin{document}

\maketitle

\begin{abstract}

Special matchings are purely combinatorial objects associated with a partially ordered set, which have applications in Coxeter group theory. We provide an explicit characterization and a complete classification of all special matchings of any lower Bruhat interval. The results hold in any arbitrary Coxeter group and have also applications in the study of the corresponding parabolic Kazhdan--Lusztig polynomials.
\end{abstract}

\section{Introduction}

Coxeter groups have a wide range of applications in  several areas of mathematics such as algebra, geometry, and combinatorics. The Bruhat order plays an important role in Coxeter group theory; it was introduced,  in the case of Weyl groups, as the partial order structure controlling the inclusion between the Schubert varieties, but it is prominent  also in other contexts, including the study of Kazhdan--Lusztig polynomials. For Coxeter group theory and its applications, we  refer the  reader to the books \cite{BB}, \cite{Bou}, \cite{Hil}, \cite{Hum} (and references cited there).

Special matchings are  purely combinatorial objects, which can be defined for any partially ordered set, and have their main applications in  
Coxeter group theory.  The special matchings of a Coxeter group are abstractions of the maps  
given by the multiplication (on the left or on the right) by a Coxeter generator. Precisely, let $(W,S)$ be a Coxeter system, so that $W$ is both a group and a partially ordered set (under Bruhat order), and let  $e$ denote the identity element of $W$. Given $w\in W$,
a special matching of $w$  is an involution \( M:[e,w]\rightarrow [e,w] \)  of the Bruhat interval $[e,w]$
such that 
\begin{enumerate}
\item either \( u \lhd M(u)\) or  \( u \rhd M(u)\), for all \( u\in [e,w]  \),
\item  if $u_1\lhd u_2$ then  $M(u_1)\leq M(u_2),$
 for all \( u_1,u_2\in [e,w] \) such that \( M(u_1)\neq u_2 \).
\end{enumerate}
(Here, $\lhd$ denotes the covering relation, i.e., $x\lhd y$ means that $x<y$ and there is no $z$ with $x<z<y$.)

Special matchings were introduced in \cite{Brepr} and there studied for  the symmetric group (the prototype of a Coxeter group).
% in relation with the computation of the (ordinary) Kazhdan--Lusztig polynomials of the symmetric group. 
For sake of completeness, we also recall that Bruhat intervals are Eulerian posets and that a different but equivalent construction was introduced  by Du Cloux for all Eulerian posets in \cite{DCEJC}.
Later, for any arbitrary Coxeter group $W$, special matchings have been shown to be crucial in the study of the Kazhdan--Lusztig polynomials  of $W$ (see \cite{BCM1}), the Kazhdan--Lusztig representations of $W$ (see \cite{BCM2}), and the poset-theoretic properties of $W$ (see \cite{MJaco}). In particular, the main result
 in \cite{BCM1} is a formula to compute the  Kazhdan--Lusztig polynomial $P_{u,v}$, $u\leq v\in W$, from the knowledge of the special matchings of the elements in  $[e,v]$; as a corollary, $P_{u,v}$ depends only on the isomorphism class of the interval $[e,v]$.   

The main result of this paper is a complete classification of special matchings of lower Bruhat intervals in arbitrary Coxeter groups. In the process of proving such classification we provide several partial results on the structure of special matchings which have been applied in the theory of parabolic Kazhdan--Lusztig polynomials, which are a generalization of the Kazhdan--Lusztig polynomials  introduced by Deodhar in \cite{Deo87}. In fact, since the appearance of \cite{BCM1}, the authors have been asked many times whether the results in it could be generalized to the parabolic Kazhdan--Lusztig polynomials setting. This problem, still open for a  general Coxeter group $W$, has been recently solved in  \cite{Mpreprint} for the doubly laced Coxeter groups (and, also, in the much easier case of dihedral Coxeter systems) and in \cite{Tel} for universal Coxeter groups. Both of these papers make use of the classification of special matchings given in the present work.
 
 Since the results in this paper are valid for all Coxeter groups, we believe that they might be useful to extend the results in  \cite{Mpreprint} and \cite{Tel} also for other classes of Coxeter groups.

\iffalse
The motivation for this characterization  is in the study of Kazhdan--Lusztig polynomials. The ordinary Kazhdan--Lusztig polynomials are polynomials associated with a Coxeter group that were introduced by Kazhdan and Lusztig in \cite{K-L} and soon proved to  have many applications in  several contexts, including, among others, the geometry of Schubert varieties. 
 In \cite{BCM1}, it is shown that the ordinary Kazhdan--Lusztig polynomials can be computed using a formula which depends only on the special matchings of  lower Bruhat intervals.  The authors have been asked by several mathematicians about the possibility that  the arguments in \cite{BCM1} might be generalized and applied to the parabolic Kazhdan--Lusztig polynomials  introduced by Deodhar in \cite{Deo87}.  Ordinary Kazhdan--Lusztig polynomials are to Schubert varieties as parabolic Kazhdan--Lusztig polynomials are to the more general  parabolic Schubert varieties. 

The problem of the role of special matchings in the study of the parabolic Kazhdan--Lusztig polynomials  seems to be (at least technically) complicated and, at the moment, is open for a general Coxeter group. In a subsequent work \cite{Mpreprint}, we consider the problem for the doubly laced Coxeter groups and it turns out that we need the characterization we are presenting in this paper. We believe that this characterization might be useful also for further future results.

\fi

\section{Notation, definitions and preliminaries}

In this section, we collect  some notation, definitions,
 and results that will be used in the rest of this work.

We  follow \cite[Chapter 3]{StaEC1} for undefined notation and 
terminology concerning partially ordered sets. In particular,
 given $x,y$ in a partially ordered set  $P$, 
%we let $[x,y] = \{ z \in P: \; x \leq z \leq y \}$ and call this an {\em interval} of $P$. If $|[x,y]|=2$ then 
we say that $y$ {\em covers} $x$ and we write $x \lhd y$ if the interval $[x,y] = \{ z \in P: \; x \leq z \leq y \}$ has two elements,  $x$ and $y$.
%An element $z \in [x,y]$ is said to be an {\em atom} (respectively, a {\em coatom}) of $[x,y]$ if $x \lhd z$ (respectively, $z \lhd y$). 
We say that a poset $P$ is {\em graded} if $P$ has a minimum $\hat 0$ and there is a function
$\rho : P \rightarrow \mathbb N$ such that $\rho (\hat{0})=0$ and $\rho (y) =\rho (x)
+1$ for all $x,y \in P$ with $x \lhd y$. 
(This definition is slightly different from the one given in \cite{StaEC1}, but is
more convenient for our purposes.) 
We then call $\rho $ the {\em rank function}
of $P$.
The {\em Hasse diagram} of $P$ is any drawing of the graph having $P$ as vertex set and $ \{ \{ x,y \} \in \binom {P}{2} : \text{ either $x \lhd y$ or $y \lhd x$} \}$ as edge set, with the convention that, if $x \lhd y$, then the edge $\{x,y\}$ goes \emph{upward} from $x$ to $y$. When no confusion arises we will make no distinction between the Hasse diagram and its underlying graph.

A {\em matching} of a poset \( P \) is an involution
\( M:P\rightarrow P \) such that \( \{v,M(v)\}\) is an edge in the Hasse diagram of $P$, for all \( v\in P \).
A matching \( M \) of \( P \) is {\em special} if\[
u\lhd v\Longrightarrow M(u)\leq M(v),\]
 for all \( u,v\in P \) such that \( M(u)\neq v \).

The two simple results in the following lemma will often be used without explicit mention (see  \cite[Lemmas 2.1 and 4.1]{BCM1}). Given a poset $P$, two matchings $M$ and $N$ of  $P$, and $u\in P$, we denote by  $\langle M, N \rangle (u) $ the orbit of $u$ under the action of the subgroup $\langle M,N\rangle $ of the symmetric group on $P$ generated
 by $M$ and $N$. We call an interval $[u, v]$ in a poset $P$ {\em dihedral} if it is isomorphic to an interval in a finite Coxeter system of rank 2 ordered by Bruhat order (see Figure \ref{diheinte}).
\begin{lem}
\label{res}
Let \( P \) be a finite graded poset. 
\begin{enumerate}
\item Let $M$ be a special matching of $P$, and $u,v\in P$ be such that $u\leq v$,
$M(v)\lhd v$ and $M(u)\rhd u$. Then $M$ restricts to a special matching of the interval $[u,v]$.
\item Let  $M$ and $N$ be two special matchings of $P$. Then, for all $u \in P$, the orbit $\langle M, N \rangle (u) $ is a dihedral interval.
\end{enumerate} 
\end{lem}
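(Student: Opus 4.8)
My plan is to reduce both parts to two \emph{lifting properties} of a single special matching $M$, which I would establish first by induction on rank differences. \textbf{Claim A:} if $z\le v$ and $M(v)\lhd v$, then $M(z)\le v$. \textbf{Claim B:} dually, if $z\ge u$ and $M(u)\rhd u$, then $M(z)\ge u$. To prove Claim A I would induct on $\rho(v)-\rho(z)$: the base case $z=v$ is immediate, and for the inductive step I would choose a cover $z\lhd z'$ with $z'\le v$, apply the inductive hypothesis to get $M(z')\le v$, and then use the defining implication of a special matching on the cover $z\lhd z'$ (treating the harmless case $M(z')=z$ separately) to conclude $M(z)\le M(z')\le v$. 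Claim B is entirely dual.

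Part (1) then follows at once. For any $z\in[u,v]$ I have $z\le v$ with $M(v)\lhd v$, so Claim A gives $M(z)\le v$; and $z\ge u$ with $M(u)\rhd u$, so Claim B gives $M(z)\ge u$. Hence $M(z)\in[u,v]$ and $M$ preserves the interval. Since each edge $\{z,M(z)\}$ is a cover of $P$ with both endpoints in $[u,v]$, it is also a cover of $[u,v]$, so $M$ restricts to a matching there; and the special-matching implication is inherited because covers and the order relation of $[u,v]$ are those of $P$.

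For part (2) I would first observe that, $M$ and $N$ being fixed-point-free involutions, every element of the orbit $O=\langle M,N\rangle(u)$ has exactly one $M$-neighbour and one $N$-neighbour in the Hasse diagram; hence $O$ is a single cycle whose edges alternate between $M$ and $N$. I would then pick a minimal element $r$ of the poset $O$: since $M(r)\rhd r$ and $N(r)\rhd r$, iterating Claim B (for $M$ and for $N$) shows that every element reachable from $r$, that is all of $O$, lies above $r$, so $r$ is the \emph{unique} minimum of $O$; dually there is a unique maximum $v$. Consequently the cycle has exactly one local minimum and one local maximum, so it splits into two saturated chains $r=a_0\lhd\cdots\lhd a_k=v$ and $r=b_0\lhd\cdots\lhd b_k=v$, and each intermediate rank of $O$ contains exactly two elements.

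The main obstacle is to upgrade this ``two-chain'' picture to a genuine \emph{dihedral} interval, i.e.\ to show that comparability in $O$ is exactly comparison of rank ($a_i\le b_j$ precisely when $i<j$, and symmetrically), rather than the weaker order in which the chains meet only at $r$ and $v$. For this I would prove a monotonicity lemma for a single special matching $L$: if $x\le y$ with $L(x)\rhd x$ and $L(y)\rhd y$, then $L(x)\le L(y)$ (again by induction on $\rho(y)-\rho(x)$, splitting according to whether $L$ raises or lowers an intermediate cover and invoking Claim A in the lowering case). Noting that the passage from $a_i$ to $a_{i+1}$ and from $b_{i+1}$ to $b_{i+2}$ is effected by the \emph{same} generator $L\in\{M,N\}$, the monotonicity lemma turns $a_i\le b_{i+1}$ into $a_{i+1}\le b_{i+2}$; starting from $a_0=r\le b_1$ this yields $a_i\le b_{i+1}$ for all $i$, and symmetrically $b_i\le a_{i+1}$. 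Chaining these along the two increasing chains produces all the required cross-relations $a_i\le b_j$ and $b_i\le a_j$ for $i<j$, while distinct equal-rank elements are incomparable in a graded poset. This identifies $O$ with the length-$k$ Bruhat interval $[e,w]$ in the dihedral group $I_2(k)$, hence a dihedral interval. I expect the monotonicity lemma and the bookkeeping of which generator acts at each step to be the delicate points; everything else reduces to direct applications of Claims A and B.
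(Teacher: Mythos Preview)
The paper does not give its own proof of this lemma; it merely cites \cite[Lemmas~2.1 and~4.1]{BCM1}. So there is no in-paper argument to compare against.

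Your proposal is correct and is essentially the standard argument. Claims~A and~B and the deduction of part~(1) are fine as written. For part~(2), the cycle structure, the uniqueness of the minimum and maximum via Claims~A/B, and the two-chain decomposition are all sound, and your bookkeeping of which generator acts at each step is right: the step $a_i\to a_{i+1}$ and the step $b_{i+1}\to b_{i+2}$ are indeed effected by the same $L\in\{M,N\}$. One simplification worth noting: your ``monotonicity lemma'' (if $x\le y$, $L(x)\rhd x$, $L(y)\rhd y$, then $L(x)\le L(y)$) is an immediate corollary of Claim~A rather than needing a separate induction --- apply Claim~A with $v=L(y)$, using $x\le y\lhd L(y)$ and $L(L(y))=y\lhd L(y)$; the hypothesis $L(x)\rhd x$ is not even needed. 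Finally, since $P$ is graded, each relation $a_i\le b_{i+1}$ that you obtain is automatically a cover (the ranks differ by one), so the induced Hasse diagram on the orbit is exactly that of a rank-$k$ dihedral interval; the degenerate $2$-element orbit (when $M$ and $N$ agree on it) is the rank-$1$ case and causes no trouble.
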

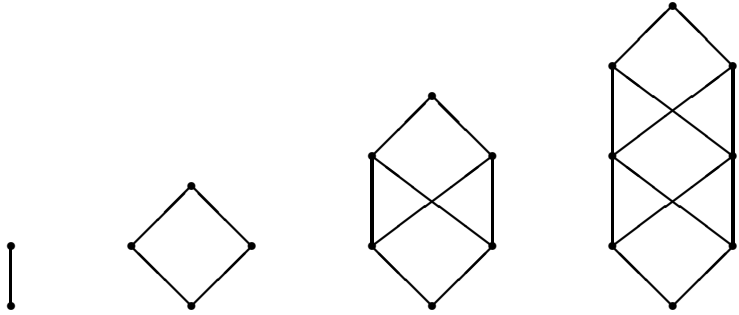
\begin{figure}
 \setlength{\unitlength}{8mm}
\begin{center}
\begin{picture}(12,6)
\thicklines
\put(0,0){\line(0,1){1}}
\put(0,0){\circle*{0.15}}
\put(0,1){\circle*{0.15}}

\put(3,0){\line(1,1){1}}
\put(3,0){\line(-1,1){1}}
\put(4,1){\line(-1,1){1}}
\put(2,1){\line(1,1){1}}
\put(3,0){\circle*{0.15}}
\put(3,2){\circle*{0.15}}
\put(2,1){\circle*{0.15}}
\put(4,1){\circle*{0.15}}

\put(7,0){\line(1,1){1}}
\put(7,0){\line(-1,1){1}}
\put(8,1){\line(0,1){1.5}}
\put(6,1){\line(0,1){1.5}}
\put(6,2.5){\line(1,1){1}}
\put(8,2.5){\line(-1,1){1}}
\put(6,1){\line(4,3){2}}
\put(8,1){\line(-4,3){2}}
\put(7,0){\circle*{0.15}}
\put(8,1){\circle*{0.15}}
\put(6,1){\circle*{0.15}}
\put(6,2.5){\circle*{0.15}}
\put(8,2.5){\circle*{0.15}}
\put(7,3.5){\circle*{0.15}}

\put(11,0){\line(1,1){1}}
\put(11,0){\line(-1,1){1}}
\put(12,1){\line(0,1){3}}
\put(10,1){\line(0,1){3}}
\put(10,1){\line(4,3){2}}
\put(12,1){\line(-4,3){2}}
\put(10,2.5){\line(4,3){2}}
\put(12,2.5){\line(-4,3){2}}
\put(10,4){\line(1,1){1}}
\put(12,4){\line(-1,1){1}}
\put(11,0){\circle*{0.15}}
\put(12,1){\circle*{0.15}}
\put(10,1){\circle*{0.15}}
\put(12,2.5){\circle*{0.15}}
\put(10,2.5){\circle*{0.15}}
\put(10,4){\circle*{0.15}}
\put(12,4){\circle*{0.15}}
\put(11,5){\circle*{0.15}}

\end{picture}
\end{center}
\caption{\label{diheinte} Dihedral intervals of rank 1,2,3,4}
\end{figure}

We follow \cite{BB} for undefined Coxeter groups notation
and terminology. 

Given a Coxeter system $(W,S)$ and $s,r\in S$, we denote by $m_{s,r}$ the order of the product $sr$. Given $w \in W$, we denote by $\ell
(w )$ the length of $w $ with respect to $S$,
and we let
\begin{align*}
  D_{R}(w )& =  \{ s \in S : \;
\ell(w s) < \ell(w ) \}, \\
D_{L}(w )& = \{ s \in S: \; \ell(sw)<\ell(w)\}. 
\end{align*}

We call the elements of $ D_R(w )$ and $D_{L}(w )$, respectively, the {\em right descents} and the 
{\em left descents} of $w $.
We denote by $e$
the identity of $W$, and we let $T =\{ w s w ^{-1} : w \in W, \; s \in S \}$ be the set of {\em reflections} of $W$.
%For $u,v \in W$, we also write $\ell(u,v) = \ell(v)-\ell(u)$.

The Coxeter group $W$ is partially ordered by {\em Bruhat order} (see, e.g.,  \cite[\S 2.1]{BB} or \cite[\S 5.9]{Hum}), which will be denoted by $\leq$. The Bruhat order is the partial order whose  covering relation $\lhd$ is as follows: given $u,v \in W$, we have $u \lhd v$ if and only if $u^{-1}v \in T$ and $\ell (u)=\ell (v)-1$. 
There is a well known characterization of Bruhat order
on a Coxeter group (usually referred to as the {\em Subword Property})
that we will use repeatedly in this work,
often without explicit mention. We recall it here for the reader's
convenience. 

By a {\em subword} of a word $s_{1}\textrm{-}s_{2} \textrm{-} \cdots \textrm{-} s_{q}$ (where we use the symbol ``$\textrm{-}$'' to separate letters in a word in the alphabet $S$) we mean
a word of the form
$s_{i_{1}}\textrm{-} s_{i_{2}}\textrm{-} \cdots \textrm{-} s_{i_{k}}$, where $1 \leq i_{1}< \cdots
< i_{k} \leq q$.
If $w\in W$ then a {\em reduced expression} for $w$ is a word $s_1\textrm{-}s_2\textrm{-}\cdots \textrm{-}s_q$ such that $w=s_1s_2\cdots s_q$ and $\ell(w)=q$. When no confusion arises we also say in this case that $s_1s_2\cdots s_q$ is a reduced expression for $w$. 
\begin{thm}[Subword Property]
\label{subword}
Let $u,w \in W$. Then the following are equivalent:
\begin{itemize}
\item $u \leq w$ in the Bruhat order,
\item  every reduced expression for $w$ has a subword that is 
a reduced expression for $u$,
\item there exists a  reduced expression for $w$ having a subword that is 
a reduced expression for $u$.
\end{itemize}
\end{thm}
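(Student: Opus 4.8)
The plan is to establish the cycle of implications (i)$\Rightarrow$(ii)$\Rightarrow$(iii)$\Rightarrow$(i), where I label the three conditions (i), (ii), (iii) in the order listed. The implication (ii)$\Rightarrow$(iii) is immediate, since every $w\in W$ admits at least one reduced expression, so a property holding for \emph{all} reduced expressions holds for \emph{some}. The substance is therefore in the two remaining implications, both of which I would prove by induction on $\ell(w)$, taking as black boxes the two standard foundational results of Coxeter group theory: the \emph{Strong Exchange Property} and the \emph{Lifting Property}. These are the genuine content underlying the statement, and I would cite them from \cite{BB} rather than reprove them; the inductions built on top of them are short.

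For (i)$\Rightarrow$(ii) I would argue as follows. Since the Bruhat order is by definition generated by its covering relation $\lhd$, the hypothesis $u\le w$ yields a saturated chain $u=x_0\lhd x_1\lhd\cdots\lhd x_m=w$; writing $r_j=x_j^{-1}x_{j-1}$, each step reads $x_{j-1}=x_jr_j$ with $r_j\in T$ and $\ell(x_{j-1})=\ell(x_j)-1$. Now fix an \emph{arbitrary} reduced expression $s_1\textrm{-}\cdots\textrm{-}s_q$ for $w=x_m$. Applying the Strong Exchange Property to the reflection $r_m$, which shortens $w$, gives $x_{m-1}=s_1\cdots\widehat{s_{i}}\cdots s_q$ for some index $i$; because the chain is saturated this expression has exactly $q-1$ letters, hence is reduced. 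Iterating one reflection at a time, each $x_{j-1}$ inherits a reduced expression obtained from that of $x_j$ by deleting a single letter, so $u=x_0$ ultimately receives a reduced expression that is a subword of $s_1\textrm{-}\cdots\textrm{-}s_q$. As the reduced expression for $w$ was arbitrary, (ii) follows.

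For (iii)$\Rightarrow$(i) I would induct on $q=\ell(w)$, the case $q=0$ being trivial. Given a reduced expression $s_1\textrm{-}\cdots\textrm{-}s_q$ for $w$ with a reduced subword $s_{i_1}\textrm{-}\cdots\textrm{-}s_{i_k}$ equal to $u$, set $s=s_q$ and $w'=s_1\cdots s_{q-1}$, so that $w'\lhd w$. If $i_k<q$, the subword lies entirely in the reduced expression $s_1\textrm{-}\cdots\textrm{-}s_{q-1}$ for $w'$, so the inductive hypothesis gives $u\le w'\lhd w$. If instead $i_k=q$, write $u=u's$ with $u'=s_{i_1}\cdots s_{i_{k-1}}$; then $u'$ is a reduced subword of the expression for $w'$, whence $u'\le w'$ by induction, while $\ell(u's)=\ell(u')+1$ and $\ell(w's)=\ell(w')+1$, i.e. $s\notin D_R(u')\cup D_R(w')$. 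The Lifting Property (in the form: if $u'\le w'$ and $s\notin D_R(u')\cup D_R(w')$, then $u's\le w's$, which follows from $u'\le w'<w's$ together with $s\in D_R(w's)\setminus D_R(u')$) now yields $u=u's\le w's=w$, completing the induction.

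The main obstacle is not any of these inductions, which are routine once arranged, but the foundational inputs themselves. Everything delicate about Bruhat order — in particular that deleting a letter from a reduced word of $w$ produces an element $\le w$, and that this behaves coherently under multiplication by a generator — is concentrated in the Strong Exchange and Lifting Properties, whose proofs ultimately rest on the reflection representation (equivalently, the Deletion Condition) of $(W,S)$. The one genuine pitfall I would guard against is circularity: I must invoke the Lifting Property in a formulation whose proof is logically independent of the Subword Property, rather than one derived from it.
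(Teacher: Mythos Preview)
Your argument is correct and is essentially the standard textbook proof. Note, however, that the paper does not give its own proof of this theorem: immediately after the statement it simply writes ``A proof of the preceding result can be found, e.g., in \cite[\S 2.2]{BB} or \cite[\S 5.10]{Hum}.'' The cycle (i)$\Rightarrow$(ii)$\Rightarrow$(iii)$\Rightarrow$(i) you outline, with (i)$\Rightarrow$(ii) via a saturated chain and the Strong Exchange Property and (iii)$\Rightarrow$(i) via induction on $\ell(w)$ together with the Lifting Property, is precisely the argument found in those references, so there is nothing to compare beyond observing that you have reproduced the standard proof that the paper chose to cite rather than include.
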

A proof of the preceding result can be found, e.g., in \cite[\S 2.2]{BB} or \cite[\S 5.10]{Hum}. 
It is well known that $W$, partially ordered by Bruhat order, is a graded poset having $\ell$ as its rank function.

We recall that two reduced expressions of an element are always linked by a sequence of {\em braid moves}, where a braid move consists in substituting a factor $s\textrm{-}t\textrm{-}s\textrm{-} \cdots$ ($m_{s,t}$ letters) with a factor $t\textrm{-}s\textrm{-}t\textrm{-} \cdots$ ($m_{s,t}$ letters), for some $s,t\in S$. We also recall that, if $w\in W$ and $s,t\in D_R(w)$, then there exists a reduced expression for $w$ of the form $s_1\textrm{-}\cdots \textrm{-} s_k\textrm{-}\underbrace{s\textrm{-}t\textrm{-}s\textrm{-} \cdots}_{m_{s,t}\textrm { letters}}$.

For each subset $J\subseteq S$, we denote by  $W_J $ the parabolic subgroup of $W$ generated by $J$, and by  $W^{J}$ the set of minimal coset representatives:
$$W^{J} =\{ w \in W \, : \; D_{R}(w)\subseteq S\setminus J \}.$$
The following is a useful factorization of $W$ (see, e.g., in \cite[\S 2.4]{BB} or \cite[\S 1.10]{Hum}).
\begin{pro}
\label{fattorizzo}
Let $J \subseteq S$. Then:
\begin{enumerate}
\item[(i)] 
every $w \in W$ has a unique factorization $w=w^{J} \cdot w_{J}$ 
with $w^{J} \in W^{J}$ and $w_{J} \in W_{J}$;
\item[(ii)] for this factorization, $\ell(w)=\ell(w^{J})+\ell(w_{J})$.
\end{enumerate}
\end{pro}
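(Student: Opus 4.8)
The plan is to deduce both statements from the single length-additivity lemma that
\[
\ell(xy)=\ell(x)+\ell(y)\qquad\text{whenever } x\in W^J \text{ and } y\in W_J.
\tag{$\ast$}
\]
I would prove $(\ast)$ by induction on $\ell(y)$, the case $y=e$ being trivial. For $y\neq e$ pick a right descent $s\in D_R(y)$; since $y\in W_J$ one has $s\in J$, and writing $y=y's$ with $y'\in W_J$ and $\ell(y')=\ell(y)-1$, the inductive hypothesis gives $\ell(xy')=\ell(x)+\ell(y')$. As $xy=(xy')s$, it then suffices to show that $s\notin D_R(xy')$, i.e.\ that appending $s$ produces no cancellation; this non-cancellation is the heart of the matter.

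To establish it, concatenate a reduced expression $s_1\cdots s_p$ for $x$ with a reduced expression $r_1\cdots r_{p'}$ (letters in $J$) for $y'$; by the inductive hypothesis $s_1\cdots s_p\,r_1\cdots r_{p'}$ is a reduced expression for $xy'$. If $s\in D_R(xy')$, then by the Exchange Property (see \cite{BB}) a reduced expression for $xy's=xy$ is obtained by deleting one letter of this word. If the deleted letter is some $r_j$, then $y=y's$ would be a product of $p'-1$ generators of $J$, contradicting $\ell(y)=p'+1$; hence the deleted letter is some $s_i$, giving $\ell(xt)<\ell(x)$ for the reflection $t:=y's(y')^{-1}\in T\cap W_J$. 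Thus everything reduces to the auxiliary fact that \emph{$x\in W^J$ forces $\ell(xt)>\ell(x)$ for every reflection $t\in T\cap W_J$}, which contradicts $\ell(xt)<\ell(x)$ and completes the induction. I expect this auxiliary fact to be the main obstacle: it says that no right inversion of an element of $W^J$ lies in the parabolic subgroup. It can be proved by induction on $\ell(t)$ (writing a reflection of length $>1$ as $t=st's$ with $s\in J$ and $t'\in T\cap W_J$ of length $\ell(t)-2$), or, most transparently, by invoking the standard root-realization: $x\in W^J$ is equivalent to $x(\alpha_s)\in\Phi^+$ for all $s\in J$, hence $x$ sends every positive root of $W_J$ to a positive root, and then the criterion $\ell(xt_\beta)>\ell(x)\Leftrightarrow x(\beta)\in\Phi^+$ yields the claim.

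Granting $(\ast)$, existence and part (ii) follow at once. Let $w^J$ be an element of minimal length in the coset $wW_J$ (it exists, since lengths are nonnegative integers) and set $w_J:=(w^J)^{-1}w\in W_J$, so that $w=w^J\,w_J$. Minimality forces $w^J\in W^J$: otherwise some $s\in D_R(w^J)\cap J$ would make $w^Js$ a strictly shorter element of the same coset, a contradiction. Then $(\ast)$ gives $\ell(w)=\ell(w^J)+\ell(w_J)$, which is exactly (ii).

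Finally, for uniqueness suppose $w=xy=x'y'$ with $x,x'\in W^J$ and $y,y'\in W_J$. Then $x'=x\,(yy'^{-1})$ and $x=x'\,(y'y^{-1})$ with $yy'^{-1},\,y'y^{-1}\in W_J$, so applying $(\ast)$ to both factorizations yields $\ell(x')=\ell(x)+\ell(yy'^{-1})$ and $\ell(x)=\ell(x')+\ell(yy'^{-1})$ (using $\ell(g)=\ell(g^{-1})$). Adding these forces $\ell(yy'^{-1})=0$, whence $y=y'$ and therefore $x=x'$, proving the uniqueness in (i).
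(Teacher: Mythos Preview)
Your argument is correct. Note, however, that the paper does not actually prove this proposition: it is quoted as a standard fact with references to \cite[\S 2.4]{BB} and \cite[\S 1.10]{Hum}. Your proof is essentially the classical one found in those sources (minimal coset representative plus the Exchange Property, with the ``no $W_J$-reflection is a right inversion of an element of $W^J$'' step handled via roots), so there is nothing to compare beyond observing that you have supplied the details the paper chose to outsource.
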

There are, of course, left versions of the above definition and
result. Namely, if we let  
\begin{equation*}
^{J}  W = \{ w \in W \, : \; D_{L}(w)\subseteq S\setminus J \} =(W^{J})^{-1}, 
\end{equation*}
then every $w \in W$ can be uniquely factorized $
w={_{J} w} \cdot  {^{J}\! w}$, where ${_{J} w} \in W_{J}$, ${^{J} \!
w} \in \,  ^{J} W$,
and $\ell(w)=\ell({_{J} w })+\ell({^{J} \! w})$.

We will also need the two following well known results (a proof of the first can be found, e.g., in \cite[Lemma 7]{Hom74}, while the second is easy to prove).
\begin{pro}
\label{unicomax}
Let \( J\subseteq S \) and $w \in W$.  The set 
$ W_{J}\cap [e,w] $ has a unique  maximal element  \( w_0(J) \),
so that \( W_{J}\cap [e,w]$ is the interval $[e,w_0(J)] \). 
\end{pro}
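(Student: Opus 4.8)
The plan is to prove, by induction on $\ell(w)$, the \emph{stronger} statement that $W_J\cap[e,w]$ has a \emph{greatest} element; both assertions of the proposition then follow immediately. Indeed, once a maximum $w_0(J)$ is produced it is in particular the unique maximal element, and since $W_J\cap[e,w]$ is downward closed in $W_J$ — if $v\le u$ with $u\in W_J$, then by the Subword Property every reduced expression of $v$, being extractable from one of $u$, uses only letters of $J$, so $v\in W_J$ — we obtain $W_J\cap[e,w]=[e,w_0(J)]$, the interval being the same whether computed in $W$ or in $W_J$.

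For the induction the base case $w=e$ is trivial. For $\ell(w)\ge 1$ I would fix $s\in D_R(w)$, write $w=w's$ with $\ell(w')=\ell(w)-1$, and set $x:=\max(W_J\cap[e,w'])$, which exists by the inductive hypothesis. The key tool throughout is the Lifting Property of Bruhat order, together with the elementary remark that $D_R(u)\subseteq J$ for every $u\in W_J$ (a reduced word of $u$ uses only letters of $J$). If $s\notin J$, then no $u\in W_J$ has $s$ as a right descent, so the Lifting Property forces $u\le ws=w'$ whenever $u\le w$; hence $W_J\cap[e,w]=W_J\cap[e,w']$ and $x$ is again the maximum.

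The substantial case is $s\in J$. Here I would introduce the candidate $m:=xs$ if $s\notin D_R(x)$, and $m:=x$ otherwise. In either case $m\in W_J$, one checks $s\in D_R(m)$ and $x\le m$, and a single application of the Lifting Property gives $m\le w$. It then remains to verify that $m$ dominates every $u\in W_J\cap[e,w]$, which I would do by splitting according to whether $s\in D_R(u)$: if $s\notin D_R(u)$ then lifting yields $u\le w'$, so $u\le x\le m$; if $s\in D_R(u)$ then $us\in W_J$ and lifting gives $us\le ws=w'$, hence $us\le x\le m$, and since $s\in D_R(m)\setminus D_R(us)$ a final lift recovers $u=(us)s\le m$.

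I expect this last ``lift back'' step — passing from $us\le m$ to $u\le m$ in the case $s\in D_R(u)$ — to be the only delicate point, since it is precisely where the choice of $m$ as an element having $s$ as a right descent is used; everything else is bookkeeping with descents and the inductive hypothesis. (The same statement is established in \cite[Lemma 7]{Hom74}.)
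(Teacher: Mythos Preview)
Your proof is correct: the induction on $\ell(w)$ via the Lifting Property goes through cleanly, and the one point you flag as delicate --- recovering $u\le m$ from $us\le m$ when $s\in D_R(u)$ --- is handled properly because your construction guarantees $s\in D_R(m)$, so that the Lifting Property applies (note $us\neq m$ since $s\notin D_R(us)$, hence $us<m$ strictly).

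As for comparison with the paper: the paper does not prove this proposition at all. It is stated as a known result, with the remark ``a proof of the first can be found, e.g., in \cite[Lemma 7]{Hom74}''. You yourself cite the same source at the end. So there is no ``paper's own proof'' to compare against; your argument is a standard and self-contained one, essentially the proof one would expect to find in the literature.
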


We note that the term $w_0(J)$ denotes something different in \cite{BB} and that if $J=\{s,t\}$ we adopt the lighter notation $w_0(s,t)$ to  mean $w_0(\{s,t\})$.

\begin{pro}
\label{mantiene}
Let \( J\subseteq S \) and $v,w \in W$, with $v\leq w$. Then $v^J\leq w^J$ and ${^J\! v}\leq \,{^J\!w}$.
\end{pro}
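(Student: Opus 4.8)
The plan is to prove the right-hand statement $v^{J}\le w^{J}$ directly via the Subword Property (Theorem \ref{subword}) together with the factorization of Proposition \ref{fattorizzo}, and then to deduce the left-hand statement ${^J\! v}\le {^J\! w}$ formally by passing to inverses.

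First I would fix compatible reduced expressions. By Proposition \ref{fattorizzo}, concatenating a reduced expression of $w^{J}$ with one of $w_{J}$ gives a reduced expression $w=s_{1}\textrm{-}\cdots\textrm{-}s_{p}\textrm{-}t_{1}\textrm{-}\cdots\textrm{-}t_{q}$, where $s_{1}\textrm{-}\cdots\textrm{-}s_{p}$ is reduced for $w^{J}$ and $t_{1}\textrm{-}\cdots\textrm{-}t_{q}$ is reduced for $w_{J}$ with all $t_{i}\in J$. Since $v\le w$, the Subword Property provides a subword that is a reduced expression for $v$. Writing $x$ for the product of the chosen letters among $s_{1},\dots,s_{p}$ and $y$ for the product of the chosen letters among $t_{1},\dots,t_{q}$, we obtain $v=xy$ with $\ell(v)=\ell(x)+\ell(y)$; here $x\le w^{J}$, because the chosen $s$-letters form a subword of the reduced expression $s_{1}\textrm{-}\cdots\textrm{-}s_{p}$ of $w^{J}$, and $y\in W_{J}$, because every chosen $t$-letter lies in $J$.

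Next I would compare $v^{J}$ with $x$. Since $y\in W_{J}$, the elements $v$ and $x$ lie in the same coset modulo $W_{J}$, that is $vW_{J}=xW_{J}$, so $v^{J}$ is also the minimal length representative of $xW_{J}$ and we may write $x=v^{J}z$ for some $z\in W_{J}$. The key observation is that this minimal representative is in fact the Bruhat-minimum of its whole coset: for any $z\in W_{J}$ one has $\ell(v^{J}z)=\ell(v^{J})+\ell(z)$ by Proposition \ref{fattorizzo}, so a reduced expression of $v^{J}$ occurs as a prefix, hence as a subword, of a reduced expression of $v^{J}z$, whence $v^{J}\le v^{J}z$. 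Applying this with $v^{J}z=x$ gives $v^{J}\le x$, and combining with $x\le w^{J}$ yields $v^{J}\le w^{J}$. The left-hand inequality then follows at once, since Bruhat order is preserved by the anti-automorphism $u\mapsto u^{-1}$: from $v\le w$ we get $v^{-1}\le w^{-1}$, the right version just proved gives $(v^{-1})^{J}\le (w^{-1})^{J}$, and inverting once more via the identity ${^J\! u}=((u^{-1})^{J})^{-1}$ (which reflects ${^{J}W}=(W^{J})^{-1}$ recorded above) produces ${^J\! v}\le {^J\! w}$.

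I expect the only genuine subtlety to lie in the middle step. The subword extracted for $v$ furnishes a length-additive factorization $v=xy$ with $y\in W_{J}$, but there is no reason for $x$ to belong to $W^{J}$, so one cannot simply identify $x$ with $v^{J}$. The coset argument above is exactly what circumvents this gap, exploiting that the minimal coset representative is the Bruhat-minimum of the entire coset.
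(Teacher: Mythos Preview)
Your proof is correct. The paper does not actually supply a proof of this proposition---it introduces it as one of two ``well known results'' and says only that it ``is easy to prove''---so there is nothing in the paper to compare your argument against. Your approach via the Subword Property, the length-additive factorization $v=xy$ with $y\in W_J$, and the observation that $v^J$ is the Bruhat minimum of its coset is a standard and clean way to establish the result.
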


The following is a useful combinatorial property fulfilled by all  Coxeter groups (see \cite[Proposition 3.2]{BCM1}).
\begin{pro}
\label{k32}
A Coxeter group $W$ avoids $K_{3,2}$ configurations (see Figure \ref{K32fig}), which means that there are no elements 
$a_1,a_2,a_3,b_1,b_2 \in W$, all distinct, such that either 
$a_i \lhd b_j$ for all $i \in [3]$, $j \in [2]$ or 
$a_i \rhd b_j$ for all $i \in [3]$, $j \in [2]$. 
\end{pro}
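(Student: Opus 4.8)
The plan is to prove Proposition \ref{k32} by induction on $\ell(b_1)=\ell(b_2)$ (the rank of the two-element side), using right multiplication by a common descent as the basic reduction. I treat the first case $a_i\lhd b_j$ in detail and set $r+1=\ell(b_1)=\ell(b_2)$, $r=\ell(a_1)=\ell(a_2)=\ell(a_3)$. The base cases are immediate: if $r=0$ the three $a_i$ would all equal $e$, and if $r+1=2$ then each $b_j$, having length $2$, has exactly two lower covers and cannot cover three distinct $a_i$. (More generally, any dihedral interval has at most two elements of each rank, which already forbids $K_{3,2}$ inside a rank-two parabolic subgroup.)

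The key reduction is the following: \emph{if $b_1$ and $b_2$ admit a common right descent $s\in D_R(b_1)\cap D_R(b_2)$, then right multiplication by $s$ produces a $K_{3,2}$ one rank lower.} Indeed, fix such an $s$. For each $i$, apply the standard lifting property of Bruhat order to $a_i\le b_1$ (with $s\in D_R(b_1)$): if $s\in D_R(a_i)$ then $a_is\le b_1s$, while if $s\notin D_R(a_i)$ then $a_i\le b_1s$, and since $\ell(a_i)=r=\ell(b_1s)$ in fact $a_i=b_1s$. The same dichotomy holds with $b_2$ in place of $b_1$. If some $a_i$ had $s\notin D_R(a_i)$ we would get $a_i=b_1s=b_2s$, whence $b_1=b_2$, a contradiction; hence $s\in D_R(a_i)$ for \emph{all} $i$, and $a_is\lhd b_1s$, $a_is\lhd b_2s$ for every $i$. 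As right multiplication by $s$ is a bijection, the five elements $a_1s,a_2s,a_3s,b_1s,b_2s$ remain distinct and form a $K_{3,2}$ with the small side at rank $r$, contradicting minimality. Thus a minimal counterexample admits no common right descent, and by the identical argument on the left, $D_L(b_1)\cap D_L(b_2)=\varnothing$ as well.

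It remains to rule out a counterexample with $D_R(b_1)\cap D_R(b_2)=\varnothing$, and this is the main obstacle. I would argue as follows. Pick $s_1\in D_R(b_1)$ and $s_2\in D_R(b_2)$; by disjointness $s_1\neq s_2$, $s_1\notin D_R(b_2)$, and $s_2\notin D_R(b_1)$. The dichotomy above shows that at most one $a_i$ equals $b_1s_1$, so at least two of the $a_i$ satisfy $s_1\in D_R(a_i)$; symmetrically at least two satisfy $s_2\in D_R(a_i)$. With only three elements $a_i$, pigeonhole forces some $a_i$ with $\{s_1,s_2\}\subseteq D_R(a_i)$. Since in an infinite dihedral parabolic every nonidentity element has a single right descent, this already gives $m_{s_1,s_2}<\infty$. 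Writing $J=\{s_1,s_2\}$ and using the parabolic factorization of Proposition \ref{fattorizzo}, the $W_J$-part of this $a_i$ is the longest element $w_0(J)$, whereas $s_1\notin D_R(b_2)$ and $s_2\notin D_R(b_1)$ force the $W_J$-parts of $b_1$ and $b_2$ to be proper elements of $W_J$. Comparing lengths across the covers $a_i\lhd b_1$ and $a_i\lhd b_2$, together with the monotonicity $a_i^{J}\le b_j^{J}$ of Proposition \ref{mantiene}, pins down the $W^J$-parts; feeding this back through the three covers then contradicts the well-known fact that every length-two Bruhat interval is a diamond, i.e.\ has exactly two elements of its middle rank. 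I expect this final length/parabolic bookkeeping, rather than the reduction step, to be the delicate part of the proof.

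Finally, the second case $a_i\rhd b_j$ (two elements $b_1,b_2$ each covered by three common elements) is the order-dual of the above and is handled by running the same induction \emph{upward}, reducing along common upper covers via the dual form of the lifting property; equivalently, when $W$ is finite one may apply the first case after composing with the order-reversing bijection $w\mapsto w_0w$. The reduction and the descent-existence analysis transpose verbatim, with right descents of the two-element side replaced throughout by the corresponding ascents of the three-element side.
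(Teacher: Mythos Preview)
The paper does not give its own proof of this proposition; it is simply quoted from \cite[Proposition~3.2]{BCM1}. Your reduction step is correct and standard: if $s\in D_R(b_1)\cap D_R(b_2)$ then lifting forces $s\in D_R(a_i)$ for all $i$, and right multiplication by $s$ produces a smaller $K_{3,2}$. But the residual case $D_R(b_1)\cap D_R(b_2)=\varnothing=D_L(b_1)\cap D_L(b_2)$ is not actually completed. You correctly locate an $a_i$ with $s_1,s_2\in D_R(a_i)$, so that $(a_i)_J=w_0(J)$ and $\ell(b_k^{J})\ge \ell(a_i^{J})+2$, yet this is fully compatible with $a_i\lhd b_k$: for example, in type $A_3$ with $J=\{s_1,s_2\}$ one has $a=s_1s_2s_1\lhd b=s_1s_2s_1s_3$, with $a_J=w_0(J)$, $b_J=s_1$, and $\ell(b^J)-\ell(a^J)=3$. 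Your data do not produce any rank-two interval with three middle elements, so the appeal to the diamond property hangs in the air; the ``bookkeeping'' you defer is the crux of the argument, not a formality.

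Your treatment of the second configuration $a_i\rhd b_j$ is also problematic. Bruhat order on an infinite Coxeter group is not self-dual (there is a minimum but no maximum), so neither $w\mapsto w_0w$ nor an abstract ``order-dual'' argument is available, and ``reducing along common upper covers'' would raise ranks rather than lower them---it is not an inductive step. The genuine analogue is to seek a common right (or left) descent of the three \emph{upper} elements $a_1,a_2,a_3$; when one exists it is forced to be a descent of both $b_j$ and the whole configuration drops by one. But the residual case here---three elements with no common descent---is different from, and not covered by, your case-one analysis.
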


\begin{figure}
 \setlength{\unitlength}{10mm}
\begin{center}
\begin{picture}(12,3)
\thicklines
\put(0,0){\line(1,2){1}}
\put(0,0){\line(3,2){3}}
\put(2,0){\line(-1,2){1}}
\put(2,0){\line(1,2){1}}
\put(4,0){\line(-3,2){3}}
\put(4,0){\line(-1,2){1}}
\put(0,0){\circle*{0.15}}
\put(1,2){\circle*{0.15}}
\put(2,0){\circle*{0.15}}
\put(3,2){\circle*{0.15}}
\put(4,0){\circle*{0.15}}

\put(8,0){\line(-1,2){1}}
\put(8,0){\line(1,2){1}}
\put(8,0){\line(3,2){3}}
\put(10,0){\line(-3,2){3}}
\put(10,0){\line(-1,2){1}}
\put(10,0){\line(1,2){1}}
\put(8,0){\circle*{0.15}}
\put(10,0){\circle*{0.15}}
\put(7,2){\circle*{0.15}}
\put(9,2){\circle*{0.15}}
\put(11,2){\circle*{0.15}}

\end{picture}
\end{center}
\caption{\label{K32fig} $K_{3,2}$ configurations}
\end{figure}
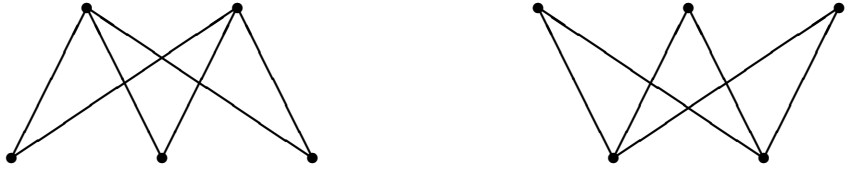

We are interested in the special matchings of a Coxeter group $W$ (to be precise, of intervals in $W$)  partially ordered by  Bruhat order. Given $w\in W$, we say that $M$ is a matching of $w$ if $M$ is a matching of the lower Bruhat interval $[e,w]$.
If \( s\in D_{R}(w) \) (respectively, $s\in D_{L}(w)$) we define a matching 
\(\rho_{s}  \) (respectively, $\lambda_{s}$) of $w$ 
by \( \rho_{s}(u)=us \) (respectively, 
$\lambda_{s}(u)=su$) for all \( u\leq w \). From 
the ``Lifting Property'' (see, e.g., \cite[Proposition 2.2.7]{BB} or \cite[Proposition 5.9]{Hum}), it easily follows that $\rho_s$ 
(respectively, $\lambda_s$) is a special matching of $w$. 
We call a matching $M$ of $w$ a \emph{left multiplication
matching} if there exists \( s \in S \) such that  \( M=\lambda _{s} \)
on $[e,w]$, and we call it a  \emph{right multiplication
matching} if there exists \( s \in S \) such that  \( M=\rho _{s} \)
on $[e,w]$.
%For sake of simplicity, when no confusion arises, we will write $\lambda _{s} $ and $\rho_s$ without writing explicitly the dependence on

The following result is a useful criterion to establish when two special matchings of an element coincide, and in particular it says that a special matching $M$ is uniquely determined by its action on dihedral intervals containing $e$ and $M(e)$. It can be proved following mutatis mutandis the proof of \cite[Lemma 5.2]{BCM1}, whereof this lemma represents a natural generalization. 
\begin{lem}\label{diheequa}
 Let $v,w,w'\in W$ $v\leq w,w'$ and $M, M'$ be special matchings of $w$ and $w'$ respectively such that $M(e)=M'(e)=s$. We also assume that
 \[
  M(u)=M'(u)\,\,\forall u \in \bigcup_{t\in S}[e,v_0(s,t)].
 \]
 Then $M(v)=M'(v)$.
\end{lem}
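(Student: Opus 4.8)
The plan is to argue by induction on $\ell(v)$. First I would record that the hypothesis is inherited by every $u\le v$: since $W_{\{s,t\}}\cap[e,u]\subseteq W_{\{s,t\}}\cap[e,v]$ we have $u_0(s,t)\le v_0(s,t)$, so $\bigcup_{t}[e,u_0(s,t)]\subseteq\bigcup_{t}[e,v_0(s,t)]$ and $M,M'$ still agree there. Thus, applying the inductive hypothesis to each coatom of $v$ (and more generally to each $u<v$), I may assume $M(u)=M'(u)$ for all $u<v$. The base of the induction, and in fact any $v$ lying in some $W_{\{s,t\}}$ (in particular every $v$ with $\ell(v)\le 1$), is handled directly by the assumption, since then $v\in[e,v_0(s,t)]$.

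For the inductive step I would split according to the direction of the matchings at $v$. If $M(v)\lhd v$, put $u:=M(v)<v$; then $M(u)=v$, so by induction $M'(u)=M(u)=v$, and since $M'$ is an involution $M'(v)=u=M(v)$. The symmetric argument disposes of the case $M'(v)\lhd v$. This leaves the case $M(v)\rhd v$ and $M'(v)\rhd v$; I write $z:=M(v)$ and $z':=M'(v)$, both covering $v$, and suppose for contradiction that $z\ne z'$.

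The heart of the argument is then a comparison of the lower covers of $z$ and of $z'$. Using the defining inequality of a special matching together with a rank count, I will show that $M$ induces a bijection between the coatoms of $z$ other than $v$ and the coatoms of $v$ that $M$ raises: for a coatom $c\lhd v$ with $M(c)\rhd c$ one has $M(c)\ne v$ (else $M(v)=c\lhd v$), hence $M(c)\le M(v)=z$ and, by length, $M(c)\lhd z$; conversely a coatom $y\lhd z$ with $y\ne v$ satisfies $M(y)\ne z$, so $M(y)\le M(z)=v$, forcing $M(y)\lhd v$. The identical statement holds for $M'$ and $z'$. Since $M(u)=M'(u)$ for all $u<v$, the set of raised coatoms of $v$ and their images coincide for $M$ and $M'$, whence $z$ and $z'$ have exactly the same set of lower covers. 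If this common set had at least three elements, these would all be covered by both $z$ and $z'$, yielding a forbidden $K_{3,2}$ configuration (Proposition \ref{k32}); therefore $z$ and $z'$ each have precisely two lower covers, namely $v$ and a common element $x$, and $M(x)=M'(x)$ is a common lower cover $c$ of $v$ and $x$.

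The remaining configuration — two distinct $z\ne z'$ covering exactly $\{v,x\}$, with $v,x$ closing up over $c$ — is the main obstacle, because it is \emph{not} ruled out by $K_{3,2}$-avoidance: a dihedral interval already realizes it, with $v,x$ the two Coxeter generators and $z,z'$ the two elements of length $2$. This is precisely why the dihedral hypothesis is indispensable here. To finish, I would invoke Lemma \ref{res}(1) to restrict $M$ and $M'$ to special matchings of $[e,z]$ and $[e,z']$ respectively (legitimate since $M(z)=v\lhd z$, $M'(z')=v\lhd z'$, and $M(e)=M'(e)=s\rhd e$), and then transport the discrepancy downward: $M$ and $M'$ preserve the diamonds $[c,z]$ and $[c,z']$ and agree strictly below $v$, so the ambiguity in the choice of top propagates to ever smaller instances until it lands in some $W_{\{s,t\}}$, where the assumption forces $M=M'$ and contradicts $z\ne z'$. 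Making this descent precise — showing that the two-cover ambiguity can always be pushed into a dihedral interval governed by $s$ — is the technical core, and is where the argument genuinely parallels the proof of \cite[Lemma 5.2]{BCM1}.
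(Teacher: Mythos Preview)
The paper does not actually prove this lemma: it only asserts that the proof of \cite[Lemma~5.2]{BCM1} goes through mutatis mutandis. Your write-up already supplies far more than that, and the inductive set-up, the treatment of the cases $M(v)\lhd v$ or $M'(v)\lhd v$, and the coatom bijection showing that $z=M(v)$ and $z'=M'(v)$ have the \emph{same} set of lower covers are all correct and well organised. (One small point: you use $M'(x)=M(x)$ for the sibling $x$ of $v$; this is fine, but note that $x\not<v$, so it does not come from the inductive hypothesis directly---rather $c:=M(x)\lhd v$, whence $M'(c)=M(c)=x$ and thus $M'(x)=c=M(x)$.)

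The genuine gap is your last paragraph. Saying that ``the ambiguity in the choice of top propagates to ever smaller instances until it lands in some $W_{\{s,t\}}$'' is not a proof, and you flag this yourself. The missing step is short once isolated, and it is the actual content behind the reference to \cite{BCM1}. You have reduced to: $z\neq z'$ each cover exactly $\{v,x\}$. Now use that rank-$2$ Bruhat intervals have four elements: for any $c'\lhd v$ the interval $[c',z]$ is $\{c',v,y,z\}$ with $y$ a coatom of $z$ other than $v$, hence $y=x$, so $c'\lhd x$. Symmetrically every coatom of $x$ lies below $v$; thus $v$ and $x$ have the same coatom set, which by $K_{3,2}$-avoidance has at most two elements. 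Iterating, $[e,z]$ has exactly two elements at every rank from $1$ to $\ell(z)-1$; in particular it has exactly two atoms, so $|\mathrm{supp}(z)|=2$. Since $s=M(e)\leq z$, one of these two simple reflections is $s$; call the other $t$. Then $z\in W_{\{s,t\}}$, hence $v\in W_{\{s,t\}}$, i.e.\ $v\in[e,v_0(s,t)]$, and the hypothesis gives $M(v)=M'(v)$ outright, contradicting $z\neq z'$. This replaces your informal ``descent'' by the clean structural fact that a Bruhat element with exactly two coatoms lies in a dihedral parabolic containing $s$.
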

For the reader's convenience, we write the following  results, which will be needed later (see \cite[Propositions 7.3 and 7.4]{BCM1} for a proof).

\begin{pro}
\label{734}
Given a Coxeter system  $(W,S)$, an element  $w\in W$ such that $w\geq r$ for all $r\in S$, and a  special matching  $M$  of $w$, let $s = M(e)$ and $J=\{r\in S : M(r)=sr\}$. 
\begin{enumerate}
\item For all $u \leq w$, every $r\in J$ such that  $r \leq u^J$  commutes with $s$.
\item Let \( t \in S \) be such that \( M \)
is not a multiplication matching on 
%\( W_{\{s,t\}}(w) \)
$[e,w_0(s,t)]$. Suppose
that \( M(t)=ts \) and  let \( x_{0}\) be
the minimal element in 
%\( W_{\{s,t\}}(w) \) 
$[e,w_0(s,t)]$
such that \( M(x_{0})\neq x_{0}s \) and $\alpha \in D_L(x_0)$.
Then \( \alpha \nleq (u^{J})^{\{s,t\}} \) for all $u \leq w$.
\end{enumerate}
\end{pro}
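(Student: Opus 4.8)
The plan is to study $M$ locally on the rank-two parabolic intervals $[e,w_0(s,t)]$ and then transport the information to all of $[e,w]$ through the parabolic factorization $u=u^J\cdot u_J$. I would first record the basic dichotomy: for every atom $r\neq s$ we have $e\lhd r$ and $M(e)=s\neq r$, so the special condition gives $s=M(e)\le M(r)$; since $M(r)$ also covers $r$, it is a length-two element lying above both $s$ and $r$, whence $M(r)\in\{sr,rs\}$. Thus $J=\{s\}\cup\{r\in S\setminus\{s\}:M(r)=sr\}$ is exactly the set of generators on which $M$ multiplies $s$ on the \emph{left}, and $M(r)=rs$ for every $r\in S\setminus J$. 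Next I would prove the restriction statement $M(w_0(s,t))\lhd w_0(s,t)$ for every $t\in S$; granting this, $M(e)=s\rhd e$ together with Lemma~\ref{res}(1) shows that $M$ restricts to a special matching of the dihedral interval $[e,w_0(s,t)]$. I would then invoke the classification of special matchings of a dihedral interval fixing $e\mapsto s$: each is either $\lambda_s$, or $\rho_s$, or (only when $w_0(s,t)$ is the longest element of $W_{\{s,t\}}$ and $m_{s,t}$ is even) one exceptional non-multiplication matching. This is the technical backbone for both parts.

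\emph{Part (1).} Since $u^J\le u\le w$ and $(u^J)^J=u^J$, I may replace $u$ by $u^J$ and so assume $u\in W^J\cap[e,w]$; the claim becomes: if $r\in J$ and $r\le u$ then $m_{s,r}=2$. I would argue by induction on $\ell(u)$, peeling a right descent $c\in D_R(u)\subseteq S\setminus J$ (so $c\neq s$, $c\notin J$, and $M(c)=cs$). If $r\le uc$ then $r\le (uc)^J$ up to a short argument and the inductive hypothesis applies to $(uc)^J$; the binding case is $r\le u$ but $r\nleq uc$, in which $r$ is reachable only through $c$. In that case I would localize to $[e,w_0(s,r)]$: because $r\in J$ the restriction there is $\lambda_s$, so if $m_{s,r}\ge 3$ then $rs$ and one of $srs,rsr$ all lie $\le w$, and combining the coverings among $\{s,r,sr,rs\}$ with $c$ produces a three-by-two fan forbidden by the $K_{3,2}$-avoidance of Proposition~\ref{k32} (or a direct violation of the special inequality). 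Hence $m_{s,r}=2$, as wanted.

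\emph{Part (2).} Here $M$ is not a multiplication matching on $[e,w_0(s,t)]$, so by the classification $w_0(s,t)$ is the longest element of $W_{\{s,t\}}$, $m_{s,t}$ is even, and $M$ is the unique exceptional special matching there; with $M(t)=ts$, below $x_0$ the matching $M$ agrees with right multiplication by $s$, and $x_0$ is the place where it first turns, recorded by $\alpha\in D_L(x_0)$. To prove $\alpha\nleq (u^J)^{\{s,t\}}$ I would suppose the contrary for some $u\le w$. Using Proposition~\ref{mantiene} and the factorization $u^J=(u^J)^{\{s,t\}}\cdot(u^J)_{\{s,t\}}$, I would lift $\alpha$ and the critical element $x_0$ into $[e,w]$ and compare $M$ with the multiplication matching $\rho_s$ on the relevant $[e,v_0(s,t)]$ via Lemma~\ref{diheequa}. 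Since $\alpha$ is a left descent of the turning point $x_0$, its presence in the quotient $(u^J)^{\{s,t\}}$ would force $M$ to act simultaneously as the exceptional matching (inherited from $[e,w_0(s,t)]$) and, through the $J$-generators controlled by part (1), as left multiplication by $s$; these two demands are incompatible and yield either a $K_{3,2}$ configuration or a failure of the special condition, the desired contradiction.

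\textbf{Main obstacle.} The special-matching inequality is purely local, so the real difficulty is the bookkeeping that converts it into the global statements about $u^J$ and $(u^J)^{\{s,t\}}$: controlling how the factorizations $u=u^J u_J$ and $u^J=(u^J)^{\{s,t\}}(u^J)_{\{s,t\}}$ behave as one removes right descents, and establishing the restriction $M(w_0(s,t))\lhd w_0(s,t)$ that legitimizes working inside the dihedral intervals. Both reductions in part (1) and the lifting in part (2) hinge on repeated, careful appeals to the $K_{3,2}$-avoidance of Proposition~\ref{k32} and to Lemma~\ref{diheequa}, and it is the interaction between the global matching $M$ and these parabolic quotients that constitutes the core of the argument.
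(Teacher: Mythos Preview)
The paper does not give a self-contained proof of this proposition: it records the statement ``for the reader's convenience'' and refers to \cite[Propositions~7.3 and~7.4]{BCM1}. What the present paper does display of that machinery is Lemma~\ref{5.4gen} (a generalization of \cite[Lemma~5.4]{BCM1}), and that is the relevant point of comparison.

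Your Part~(1) has a genuine gap, and it is precisely inverted from where you locate it. The ``binding case'' you isolate, $r\le u$ but $r\nleq uc$, is empty: since $r\in J$ and $c\in S\setminus J$ we have $r\neq c$, and any reduced expression of $u=(uc)\cdot c$ already exhibits $r$ inside $uc$. The step you wave through with ``up to a short argument'', namely passing from $r\le uc$ to $r\le (uc)^{J}$, is the actual content. It can fail: in type $A_3$ with $J=\{s_1,s_2\}$, $u=s_1s_2s_3\in W^{J}$, $c=s_3$, one gets $uc=s_1s_2\in W_J$ and $(uc)^{J}=e$, so $r=s_1\le uc$ but $r\nleq(uc)^{J}$. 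What rules this configuration out under $w$ is not a bare $K_{3,2}$ count on $\{s,r,sr,rs,c\}$ but the forbidden-subword lemma of \cite{BCM1} (the ancestor of Lemma~\ref{5.4gen} here): when $r\in J$, $l\notin J$, and neither commutes with $s$, the elements $rsl$ (and $rl$ if $rl\neq lr$) cannot lie below $w$. That lemma is itself proved by a delicate analysis of what $M$ must do on the length-$2$ and length-$3$ elements involved, and your sketch does not reproduce it.

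Your Part~(2) is too schematic to constitute an argument. The sentence ``lift $\alpha$ and $x_0$ into $[e,w]$ and compare $M$ with $\rho_s$ via Lemma~\ref{diheequa}'' is the whole proof, and you have not produced the explicit element on which $M$ would be forced to take two values, nor the covering pair that violates the special condition. In \cite{BCM1} this part (their Proposition~7.4) is again handled through a concrete local analysis of coverings near $x_0$ (compare the use of \cite[Lemma~6.2]{BCM1} in the proof of Proposition~\ref{stcst} above), not by Lemma~\ref{diheequa} alone. Your closing paragraph correctly names the obstacles, but the body of the sketch does not overcome them.
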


One of the main  propedeutic results of \cite{BCM1} is the following  (see \cite[Theorem 7.6]{BCM1}). Let $(W,S)$ be a Coxeter system. Given $u \in W$, $J \subseteq S$, $s \in J$
and $t \in S\setminus J$, we may factorize and write 
$$u=u^J \cdot \, u_J=(u^{J})^{\{s,t\}}\, \cdot  \, (u^{J})_{\{s,t\}} \, \cdot \, _{\{s\}} (u_{J}) \, \cdot \, ^{\{s\}}(u_{J}),$$ (see Proposition \ref{fattorizzo}). Recall that, clearly, if $M$ is a special matching and $M(e)=s$, then $M(r)\in \{rs,sr\}$, for all $r\in S$ such that $M(r)$ is defined.

\begin{thm}[\cite{BCM1}, Lemma 7.1 and Theorem 7.6]
\label{ma16}Let \( (W,S) \) be a Coxeter system, \( w\in W \), 
\( M \) be a special matching of \( w \) and \( s=M(e) \). Set $J=\{r \in S : r \leq w, M(r)=sr\}$.
\begin{itemize}
%\item [(i)]If there exists a (necessarily unique) \( t \in S \) such that \( M(t)=ts \) but \( M\neq \rho _{s} \) on \( W_{\{s,t\}}(w) \), then \[ M(u)=(u^{J})^{\{s,t\}}\, M\Big ((u^{J})_{\{s,t\}} \, \cdot \, _{\{s\}} (u_{J})\Big )\, ^{\{s\}}(u_{J}),\] for all $u \leq w$.
\item [(i)] Suppose that there exists a (necessarily unique) \( t \in S \) such that $M$ is not a multiplication matching on  $[e,  w_0(s,t)]$. Assume that  \( M(t)=ts \). Then \[
M(u)=(u^{J})^{\{s,t\}}\, \cdot \, M\Big ((u^{J})_{\{s,t\}} \, \cdot \, _{\{s\}} (u_{J})\Big )\, \cdot \,
 ^{\{s\}}(u_{J}),\]
for all $u \leq w$.
\item [(ii)]Suppose that  \( M \) is a multiplication matching on 
 $[e,  w_0(s,x)]$,
%\( W_{\{x,s\}}(w) \)
for all \( x\in S \). Then \[
M(u)=u^{J}su_{J},\]
for all $u \leq w$.
\end{itemize}
\end{thm}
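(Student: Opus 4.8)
The plan is to treat both formulas uniformly by letting $M'$ denote the map on $[e,w]$ defined by the right-hand side of the relevant identity, and then to prove that $M'$ is a special matching of $w$ with $M'(e)=s$ which agrees with $M$ on all the dihedral intervals $[e,w_0(s,x)]$, $x\in S$. The criterion of Lemma \ref{diheequa}, applied with $w'=w$ and $v$ arbitrary, then forces $M=M'$ at every $v\leq w$, which is exactly the asserted formula. Before starting I would reduce to the case $w\geq r$ for all $r\in S$ by replacing $S$ with the support $\{r\in S:r\leq w\}$ of $w$, since no generator outside this support occurs in a reduced word of any element of $[e,w]$; this makes the hypotheses of Proposition \ref{734} available.

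The combinatorial heart is to show, in case (i), that $M$ modifies only the middle factor $(u^J)_{\{s,t\}}\cdot{}_{\{s\}}(u_J)\in W_{\{s,t\}}$ of the factorization $u=(u^J)^{\{s,t\}}\cdot(u^J)_{\{s,t\}}\cdot{}_{\{s\}}(u_J)\cdot{}^{\{s\}}(u_J)$ and leaves the two outer factors inert. Here I would lean on Proposition \ref{734}: part (1), that every $r\in J$ below $u^J$ commutes with $s$, lets $s$ ``pass through'' the left factor $(u^J)^{\{s,t\}}$ without altering it, while part (2), the non-domination $\alpha\nleq(u^J)^{\{s,t\}}$ at the minimal $x_0$ where non-multiplication behavior first appears, is what prevents that behavior from leaking out of $W_{\{s,t\}}$ into the outer factors. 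Combined with the length-additivity of the (iterated) parabolic factorizations of Proposition \ref{fattorizzo} and with the Subword Property, these invariances show that $M'$ lands in $[e,w]$ and that applying the formula twice returns $u$ (the outer factors come back unchanged and the middle factor returns by $M^2=\mathrm{id}$), so $M'$ is a well-defined involution; since $M$ changes the length of the middle factor by exactly one and lengths add across the factorization, $\{u,M'(u)\}$ is always an edge, i.e. $M'$ is a matching.

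To see that $M'$ is special I would take $u\lhd v$ with $M'(u)\neq v$ and use Proposition \ref{mantiene} to compare the corresponding outer factors of $u$ and $v$, reducing the required inequality $M'(u)\leq M'(v)$ to a comparison of the middle factors inside $W_{\{s,t\}}$, where it follows from the specialness of the restriction of $M$ to that dihedral interval (Lemma \ref{res}(1)) together with the Subword Property. Case (ii) runs along the same lines with the middle factor replaced by the insertion of $s$: since $u^J\in W^J$ and $su_J\in W_J$ (recall $s\in J$), length-additivity gives $(u^Jsu_J)^J=u^J$ and $(u^Jsu_J)_J=su_J$, so $M'$ is at once an involution and a matching, and specialness again follows from Proposition \ref{mantiene} and the Subword Property. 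Finally I would verify the hypothesis of Lemma \ref{diheequa}, $M=M'$ on $\bigcup_{x\in S}[e,w_0(s,x)]$: for $u\in W_{\{s,x\}}$ the outer factors degenerate, so when $x=t$ the formula of (i) collapses to $M(u)$ itself, while for $x\neq t$ (and throughout case (ii)) one reads off from whether $x\in J$ that $M$ is the multiplication matching $\lambda_s$ when $\{s,x\}\subseteq J$ and $\rho_s$ when $x\notin J$ on $[e,w_0(s,x)]$, which is precisely what $M'$ produces.

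I expect the main obstacle to be the inertness of the outer factors in case (i): exactly the step where Proposition \ref{734}(2) must be invoked to confine the non-multiplication behavior of $M$ to the single dihedral factor $W_{\{s,t\}}$. Once this structural fact is in place, the involution, matching, and specialness properties are essentially bookkeeping built on top of it, and Lemma \ref{diheequa} delivers $M=M'$.
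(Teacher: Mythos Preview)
This theorem is not proved in the present paper; it is quoted from \cite{BCM1} (Lemma~7.1 and Theorem~7.6) as a preliminary result, so there is no in-paper proof to compare against. Your strategy --- define $M'$ by the formula, show it is a special matching of $w$, and conclude $M=M'$ via Lemma~\ref{diheequa} --- is appealing, and your verification that $M'$ agrees with $M$ on each $[e,w_0(s,x)]$ is correct. But two genuine gaps remain.

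First, you never establish that $M'(u)\leq w$. Proposition~\ref{734} and length-additivity constrain which generators occur in $(u^J)^{\{s,t\}}$, but they do not bound the product $(u^{J})^{\{s,t\}}\cdot M(\bar u)\cdot{}^{\{s\}}(u_{J})$ above by $w$ in Bruhat order. In the paper's later language this is precisely Property~R2 of a right system, and there it is \emph{assumed}, not proved; you also cannot import the right-system machinery to get it, since Proposition~\ref{R4-R5} (needed for R4 and R5) already invokes Theorem~\ref{ma16} in its proof, so that route is circular. Second, the specialness argument does not go through as written. Proposition~\ref{mantiene} yields $u^{J}\leq v^{J}$ and ${}^{J}u\leq{}^{J}v$, but says nothing about $u_{J}$ versus $v_{J}$, let alone about the four individual pieces; when $u\lhd v$ the deleted letter can lie in any of the four factors, and the outer pieces of $u$ and $v$ may genuinely differ, so the claimed ``reduction to a comparison of middle factors'' is not available. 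This is exactly why the paper's own proof that right-system maps are special (Theorem~\ref{special}) needs Theorem~\ref{fa2} and a three-case analysis rather than a monotonicity shortcut. The argument in \cite{BCM1} avoids these difficulties by working directly with $M$ (by induction on $\ell(u)$, using the special-matching axioms and $K_{3,2}$-avoidance) rather than constructing $M'$ independently and proving it special.
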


\section{First algebraic properties of special matchings in Coxeter groups}

Theorem \ref{ma16} establishes fundamental algebraic properties satisfied by the special matchings of  lower Bruhat intervals. These properties provide what is needed  for the study of the Kazhdan--Lusztig polynomials developed in \cite{BCM1}, however they are not sufficient for the generalization of the results in \cite{BCM1} to the 
 parabolic  Kazhdan--Lusztig polynomials. 
In this section, we provide further algebraic properties of the special matchings which are needed in the parabolic setting. These results will also serve as a motivation for the definition of left and right systems in Section \ref{leftright} and the resulting characterization and classification of all special matchings of a lower Bruhat interval. 

We begin with the following easy result, which holds for a larger class of posets, not only for the Coxeter groups. For sake of simplicity, we say that a matching $M$ of a poset $P$ is 
%$\parallel \hspace{-.15cm} \diagdown \hspace{-.15cm} \parallel $-avoiding 
N-avoiding if there are not 2 elements $u,v \in P$, $u\lhd v$, $u\neq M(v)$, such that $u \lhd M(u)$ and $M(v) \lhd v$. We call such configuration the 
%$\parallel \hspace{-.15cm} \diagdown \hspace{-.15cm} \parallel $-configuration 
N-configuration (see the following picture, where we join two vertices by a double edge if they are mapped to each other by the matching $M$).

$$
\begin{tikzpicture}

\node[left] at (-1.5,1.5){$M(v)$};
\node[right] at (1.5,1.5){$u$};

\draw{(-1.45,3.5)--(-1.45,1.5)}; 
\draw{(-1.55,3.5)--(-1.55,1.5)}; 
\draw{(1.45,3.5)--(1.45,1.5)}; 
\draw{(1.55,3.5)--(1.55,1.5)}; 
\draw{(-1.5,3.5)--(1.5,1.5)}; 

\draw[fill=black]{(1.5,1.5) circle(3pt)};
\draw[fill=black]{(1.5,3.5) circle(3pt)};
\draw[fill=black]{(-1.5,3.5) circle(3pt)};
\draw[fill=black]{(-1.5,1.5) circle(3pt)};

\node[left] at (-1.5,3.5){$v$};
\node[right] at (1.5,3.5){$M(u)$};

\end{tikzpicture}
$$

\begin{pro}
\label{nonconfigurazione}
Let $P$ be a graded poset such that all its intervals of rank 2 have cardinality $\geq 4$. Then a matching $M$ of $P$ is special if and only if it is {\emph N}-avoiding.
\end{pro}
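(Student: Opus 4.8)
The plan is to prove both implications, with the forward one being essentially a rank count and the reverse one carrying all the content (and being the only place where the hypothesis on rank-$2$ intervals is used).

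For the direction ``special $\Rightarrow$ N-avoiding'' I would argue by contradiction. An N-configuration consists of a covering $u \lhd v$ with $u \neq M(v)$ (equivalently $M(u) \neq v$), together with $u \lhd M(u)$ and $M(v) \lhd v$. Specialness would then force $M(u) \leq M(v)$; but $\rho(M(u)) = \rho(u)+1 = \rho(v)$ while $\rho(M(v)) = \rho(v)-1$, so $M(u)$ has strictly larger rank than $M(v)$ and cannot lie below it. This contradiction needs no assumption on interval cardinalities.

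For ``N-avoiding $\Rightarrow$ special'' I would fix a covering $u \lhd v$ with $M(u) \neq v$ and aim to show $M(u) \leq M(v)$, splitting into four cases according to whether $M$ sends $u$ and $v$ upward or downward. If $M(u) \lhd u$ and $M(v) \rhd v$, then $M(u) \lhd u \lhd v \lhd M(v)$ gives the conclusion at once. If $M(u) \rhd u$ and $M(v) \lhd v$, the pair $(u,v)$ is exactly an N-configuration, which is excluded by hypothesis. This leaves the two ``parallel'' cases (both matched upward, or both matched downward), which are interchanged by passing to the dual poset. Since both the statement and the property of being N-avoiding are self-dual (covering reverses, upward and downward matchings swap, and the N-configuration maps to an N-configuration), it suffices to treat, say, the case $M(u) \rhd u$ and $M(v) \rhd v$.

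In that case I would examine the rank-$2$ interval $[u, M(v)]$: it has $u$ at the bottom, $M(v)$ at the top, and $v$ as a middle element. Here the hypothesis enters: since $|[u, M(v)]| \geq 4$, there is a second middle element $w \neq v$, i.e. $u \lhd w \lhd M(v)$. Applying N-avoiding to the covering $w \lhd M(v)$, whose top element is matched downward to $v \neq w$, forces $M(w) \lhd w$ (otherwise $w$ would be matched upward and we would get a forbidden N-configuration). Then applying N-avoiding to the covering $u \lhd w$, whose bottom element $u$ is matched upward, forces $M(w) = u$ (otherwise $w$ is matched downward to some $M(w) \neq u$, again an N-configuration). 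Hence $M(u) = w$, and since $w \lhd M(v)$ we conclude $M(u) = w \leq M(v)$, as required. The main obstacle is precisely isolating these two parallel cases and recognizing that the right object to inspect is the rank-$2$ interval spanned by $u$ and $M(v)$: its guaranteed extra middle element is exactly what two successive applications of the N-avoiding condition need, and this is where the cardinality assumption is indispensable.
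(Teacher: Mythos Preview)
Your proof is correct and follows essentially the same approach as the paper's: the forward direction is immediate, and the substantive direction hinges on locating the rank-$2$ interval determined by the covering pair and its $M$-images, invoking the cardinality hypothesis to produce an extra middle element, and then applying the N-avoiding condition twice. The only differences are presentational: the paper argues the reverse direction by contrapositive (assume $M$ not special, exhibit an N-configuration) whereas you argue directly, and the paper dismisses the second parallel case as ``completely similar'' whereas you invoke self-duality; the underlying rank-$2$ interval argument is identical.
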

\begin{proof}
The \lq \lq only if\rq \rq  part is clear. We prove the \lq \lq if\rq \rq part by showing that a matching $M$ which is not special  must contain an N-configuration.

Since $M$ is not special, there exist $x,y\in P$, $x\lhd y\neq M(x)$, such that $M(x) \not \leq M(y)$. We may assume that either
\begin{enumerate}
\item $M(x)\lhd x$, $M(y)\lhd y $, and $M(x) \not \leq M(y)$, or
\item $M(x)\rhd x$, $M(y)\rhd y $, and $M(x) \not \leq M(y)$, 
\end{enumerate}
otherwise $x$ and $y$ form an N-configuration and we are done.
We only treat the first case, the second one being completely similar. 

So suppose that $M(x)\lhd x$, $M(y)\lhd y $, and $M(x) \not \leq M(y)$. Since $[M(x),y]$ is an interval of rank 2, it contains $M(x)$, $x$, $y$, and another element $p$. We have $p\neq M(y)$ since $M(x) \not \leq M(y)$. If $p\lhd M(p)$, we obtain an N-configuration with $p$, $M(p)$, $y$, and $M(y)$; if $p\rhd M(p)$, we obtain an N-configuration with $p$, $M(p)$, $x$, and $M(x)$.
\end{proof}

We note that all rank $2$ intervals in a Coxeter group have cardinality $4$.

\begin{obs}
%\subsection{Observation.}
%\label{3.1}
Let $(W,S)$ be a Coxeter system with Coxeter matrix $M$. Fix an  element $w\in W$. 
%We may clearly assume that $s \leq w$, for all $s \in S$. 
By Proposition \ref{unicomax},  the intersection of the lower Bruhat interval $[e,w]$ with the dihedral parabolic subgroup $W_{\{r,r'\}}$  generated by any two given generators $r,r'\in S$ has a maximal element $w_0(r,r')$. 
Let $M'$ be the matrix whose entry $m'_{r,r'}$ is the length  $\ell(w_0(r,r'))$ of the element  $w_0(r,r')$, for each pair $(r,r')\in S\times S$. Let $(W',S)$ be the Coxeter system with the same set $S$ of Coxeter generators but having $M'$ as Coxeter matrix. Then all reduced expressions of the element $w\in W$ are also reduced expressions as expressions in $W'$ and are reduced expressions of a unique element, say $w'\in W'$. By the Subword Property (Theorem \ref{subword}),
 the intervals $[e,w]\subseteq W$ and $[e,w']\subseteq W'$ are isomorphic as posets and hence the special matchings of $w$ and those of $w'$ correspond. This is the reason why, in the study of the special matchings of $w$, it would be natural to assume that, for all $r,r'\in S$, the Coxeter matrix entry $m_{r,r'}$ is equal to  $\ell(w_0(r,r'))$, i.e., $w_0(r,r')$ is the top element of the parabolic subgroup $W_{\{r,r'\}}$  generated by $r$ and $r'$. In particular, this assumption would  lighten some technicalities and
 assure that $\lambda_r$,   $\lambda_{r'}$, $\rho_r$, $\rho_{r'}$ are all special matchings of $w_0(r,r')$ since $\{r,r'\}= D_L(w_0(r,r')) = D_R(w_0(r,r'))$, while there would not be loss of generality for this section.

However we are not making this assumption because it could be misleading for the applications of the results in this paper. In particular, being interested in the special matchings as possible tools to compute the parabolic Kazhdan--Lusztig polynomials associated with a subset $H\subseteq S$, it is worth noting that changing
 the Coxeter matrix as above would have the effect of changing the minimal coset representatives set $W^H$, and hence, evidently, the parabolic Kazhdan--Lusztig polynomials. 
 \end{obs}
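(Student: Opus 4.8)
The content of the observation is the claim that $[e,w]\subseteq W$ and $[e,w']\subseteq W'$ are isomorphic as posets, where $w'$ is the element spelled in $W'$ by the reduced words of $w$; the correspondence of special matchings is then automatic, since ``special matching'' is a purely order-theoretic notion. So the plan is to build an explicit order isomorphism $\phi\colon[e,w]\to[e,w']$ out of the Subword Property (Theorem~\ref{subword}).

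First I would normalise the situation by replacing $S$ with the support $\{r\in S:r\leq w\}$ (a generator not below $w$ appears in no reduced word of any $u\leq w$, so it is irrelevant to $[e,w]$). After this reduction any two distinct $r,r'$ satisfy $r,r'\leq w$, so reading a reduced word of $w$ gives $rr'\leq w$ or $r'r\leq w$ and hence $m'_{r,r'}=\ell(w_0(r,r'))\geq 2$; since moreover $w_0(r,r')=w_0(r',r)$ and $\ell(w_0(r,r'))\leq m_{r,r'}$ (the longest element of $W_{\{r,r'\}}$ has length $m_{r,r'}$ in the finite case, while $W_{\{r,r'\}}\cap[e,w]$ has a finite maximum by Proposition~\ref{unicomax} in any case), the matrix $M'$ is a legitimate Coxeter matrix with $m'_{r,r'}\leq m_{r,r'}$.

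Next I would define $\phi(u)=[\mathbf u]_{W'}$, the element of $W'$ read off any reduced word $\mathbf u$ of $u$ in $W$, and check this is well defined. Two reduced words of a fixed $u\leq w$ differ by $W$-braid moves, and each such move rewrites a consecutive factor $\underbrace{rr'r\cdots}_{m_{r,r'}}$; this factor is a reduced word for the longest element of $W_{\{r,r'\}}$, which by Theorem~\ref{subword} lies in $W_{\{r,r'\}}\cap[e,w]$ and is therefore $\leq w_0(r,r')$. Consequently $m_{r,r'}\leq\ell(w_0(r,r'))=m'_{r,r'}\leq m_{r,r'}$, i.e. $m_{r,r'}=m'_{r,r'}$, so the very same braid move is also a relation of $W'$; thus $[\mathbf u]_{W'}$ does not depend on the chosen reduced word. (There is no group homomorphism between $W$ and $W'$ in either direction, so $\phi$ is genuinely a map of intervals and nothing more.) Granting the length identity
\[
(\star)\qquad \ell_{W'}(\phi(u))=\ell(u)\quad\text{for every }u\leq w,
\]
order preservation of $\phi$ drops out of the Subword Property used in both groups: given $u_1\leq u_2$, pick a reduced word of $u_2$ containing a reduced subword for $u_1$; by $(\star)$ this word is $W'$-reduced and spells $\phi(u_2)$, so its subword spells $\phi(u_1)\leq\phi(u_2)$ in $W'$.

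The main obstacle is precisely $(\star)$ together with the bijectivity of $\phi$, and here the construction stops being symmetric: because $M'$ has weakly smaller entries, $W'$ obeys extra relations, so a $W$-reduced word could a priori collapse in $W'$, and there is \emph{no} inverse word map --- indeed two $W'$-reduced words of one element of $W'$ may spell different elements of $W$. I would control this with the maximality of $w_0(r,r')$: a reduced word of $w$, hence of any $u\leq w$, can contain no consecutive alternating $\{r,r'\}$-factor of length exceeding $m'_{r,r'}$, since such a factor would spell an element of $W_{\{r,r'\}}\cap[e,w]$ strictly longer than $w_0(r,r')$. The plan is to show, by induction on $\ell(w)$ and an exchange/deletion argument carried out inside $W'$, that this absence of over-long alternating factors both forbids any shortening in $W'$ (yielding $(\star)$) and confines the $W'$-braid moves relating two reduced subwords of a fixed reduced word of $w$ to stay among subwords of that word; the latter gives that two such subwords are $W$-equal if and only if they are $W'$-equal, hence the injectivity and, via the Subword Property in $W'$, the surjectivity of $\phi$ onto $[e,w']$. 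This inductive bookkeeping inside $W'$ is the delicate point; everything else is a direct application of Theorem~\ref{subword}.
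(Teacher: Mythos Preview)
The paper does not prove this statement: it is an \emph{observation}, and the entire justification given is the single clause ``By the Subword Property (Theorem~\ref{subword}), the intervals $[e,w]\subseteq W$ and $[e,w']\subseteq W'$ are isomorphic as posets.'' No map is written down, no length preservation is checked, and no bijectivity is argued. The observation is there to motivate (and then explicitly decline) a simplifying hypothesis, not to establish a theorem.

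Your proposal therefore goes well beyond what the paper offers. The set-up is correct: your well-definedness argument for $\phi$ is exactly right (a $W$-braid move below $w$ exhibits the longest element of $W_{\{r,r'\}}$ inside $[e,w]$, forcing $m_{r,r'}=m'_{r,r'}$, so the same move is valid in $W'$), and so is the observation that no $W$-reduced word below $w$ can contain a consecutive alternating $\{r,r'\}$-factor of length exceeding $m'_{r,r'}$. You have also correctly located the only substantive point, namely $(\star)$ and bijectivity: since $m'_{r,r'}\leq m_{r,r'}$, there are $W'$-braid moves that are not $W$-moves, so one must rule out collapse in $W'$ by hand. The exchange/deletion induction you sketch is a workable route to this; the paper simply does not engage with it. In short, there is nothing to compare against---you have supplied an honest outline where the paper asserts the conclusion.
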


Let $(W,S)$ be a Coxeter system. For an element $s\in S$, we let 
\[
 C_s:=\{r\in S:\, rs=sr\}.
\]

We need the following generalization of \cite[Lemma 5.4]{BCM1}.
\begin{lem}\label{5.4gen}
 Let $w\in W$ and $M$ be a special matching of $w$ with $M(e)=s$. Let $n\in \mathbb N$ and $r,c_1,\ldots,c_n,l\in S\cap [e,w]$ be such that $M(r)=sr\neq rs$, $c_1,\ldots,c_n\in C_s$ and $M(l)=ls\neq sl$. If $u=rc_1\cdots c_n l\in W$,  $\ell(u)=n+2$,  is such that ${D}_L(u)=\{r\}$, then $u\not \leq w$. 
\end{lem}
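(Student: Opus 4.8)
The plan is to argue by contradiction: assume $u\leq w$ and derive a contradiction from Proposition \ref{734}(1). First I would reduce to the setting in which that proposition applies. Put $S'=S\cap[e,w]$; a generator lies below $w$ exactly when it belongs to the support of $w$, so $w\geq r'$ for every $r'\in S'$, and the poset $[e,w]$ is the same whether $w$ is viewed in $W$ or in the parabolic subgroup $W_{S'}$. Hence $M$ is a special matching of $w$ in the Coxeter system $(W_{S'},S')$, to which Proposition \ref{734} applies; note that $s=M(e)$, $r$, the $c_i$ and $l$ all lie in $S'$. Set $J=\{r'\in S' : M(r')=sr'\}$ as in that proposition.

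Next I would locate $r$ and $l$ relative to $J$. Since $M(r)=sr$ we have $r\in J$, while $r$ does not commute with $s$ because $sr\neq rs$. On the other hand $M(l)=ls\neq sl$ gives $M(l)\neq sl$, hence $l\notin J$.

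The heart of the proof is to show that $r\leq u^{J}$, for then Proposition \ref{734}(1), applied to $u\leq w$, forces $r$ to commute with $s$, contradicting $sr\neq rs$. Consider the parabolic factorization $u=u^{J}\cdot u_J$ of Proposition \ref{fattorizzo}, with $u^{J}\in W^{J}$, $u_J\in W_J$ and $\ell(u)=\ell(u^{J})+\ell(u_J)$. Because $\ell(u)=n+2$, the word $r\textrm{-}c_1\textrm{-}\cdots\textrm{-}c_n\textrm{-}l$ is reduced, so $l$ belongs to the support of $u$; as $l\notin J$ we get $u\notin W_J$, whence $u^{J}\neq e$. Moreover $u^{J}$ is a left factor of $u$ of additive length, so $D_L(u^{J})\subseteq D_L(u)$: indeed if $x\in D_L(u^{J})$ then $\ell(xu)\leq\ell(xu^{J})+\ell(u_J)=\ell(u)-1$, so $x\in D_L(u)$. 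Since $u^{J}\neq e$ its left-descent set is nonempty, and being contained in $D_L(u)=\{r\}$ it equals $\{r\}$; in particular $r\in D_L(u^{J})$, whence $r\leq u^{J}$ by the Subword Property. This is the desired contradiction.

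The step I expect to require the most care is establishing $r\leq u^{J}$: it rests on the two clean facts $u^{J}\neq e$ (for which one needs $l$ in the support of $u$ and $l\notin J$) and $D_L(u^{J})\subseteq D_L(u)$ (the left-descent inclusion for a length-additive left factor), together with the single-element hypothesis $D_L(u)=\{r\}$ that pins the descent down to exactly $r$; a secondary point is the preliminary reduction to the full-support system $(W_{S'},S')$, which is what licenses the use of Proposition \ref{734}. A more self-contained route, avoiding Proposition \ref{734} and following \cite[Lemma 5.4]{BCM1}, would instead assume $u\leq w$ and use specialness along the saturated chains through $rc_1\cdots c_k$ and through $c_k\cdots c_n l$ to show that $M(u)$ covers $u$ and dominates both $s\,rc_1\cdots c_n$ and $rc_1\cdots c_n l\,s$, deriving a cyclic incompatibility among the relative positions of $r$, $s$ and $l$; in that argument the hypothesis $c_i\in C_s$ is essential, and the real difficulty becomes controlling the reduced words of $M(u)$ in the presence of the intervening generators $c_i$.
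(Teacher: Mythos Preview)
Your proof is correct and takes a genuinely different route from the paper's. The paper argues directly: choosing a shortest counterexample $u$, it uses Lemma~\ref{diheequa} (with $M'=\rho_s$ and then $M'=\lambda_s$) to compute $M(c_1\cdots c_n l)=c_1\cdots c_n ls$ and $M(rc_1\cdots c_n)=s\,rc_1\cdots c_n$, so that $M(u)$ must cover the three distinct elements $u$, $c_1\cdots c_n ls$ and $src_1\cdots c_n$; it then uses that $r$ and $l$ are the unique left and right descents of $u$ to enumerate the possible reduced expressions of $M(u)$ obtained by inserting an $s$, and checks that none of the resulting length-$(n+3)$ elements dominates both $sr$ and $ls$. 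Your argument instead leverages Proposition~\ref{734}(1): after the reduction to the full-support subsystem, the factorization $u=u^{J}u_{J}$ combined with $l\notin J$ and $D_{L}(u^{J})\subseteq D_{L}(u)=\{r\}$ pins down $r\leq u^{J}$, whence Proposition~\ref{734}(1) forces $rs=sr$, the contradiction. Your route is shorter and, notably, never uses the hypothesis $c_i\in C_s$, so it actually yields a slightly more general statement; the trade-off is that it imports the comparatively heavier Proposition~\ref{734} from \cite{BCM1}, whereas the paper's direct computation stays closer to first principles and makes transparent where the commuting hypothesis $c_i\in C_s$ enters (it is exactly what allows Lemma~\ref{diheequa} to propagate $\rho_s$ and $\lambda_s$ along the subchains). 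There is no circularity in your approach, since Proposition~\ref{734} is established in \cite{BCM1} independently of the present lemma.
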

\begin{proof}
 We proceed by contradiction and take an element $u=rc_1\cdots c_n l\leq w$ of shortest length satisfying the conditions of the statement. 
%In particular, we can assume that $\ell(u)=n+2$. 
Lemma 5.4 of \cite{BCM1} says that in the hypothesis of this lemma $rsl\not\leq w$ and if in addition $rl\neq lr$ then $rl\not \leq w$. Therefore  we can exclude  $n=0$   (the hypothesis ${D}_L(u)=\{r\}$ implies $rl\neq lr$ in this case) and  that $s\leq u$. Moreover,  by the minimality of $\ell(u)$ we have that $l$ is the unique right descent of $u$.
 Now we have
 \[
  M(c_1\cdots c_nl)=c_1\cdots c_nls\rhd c_1\cdots c_nl
 \]
by Lemma \ref{diheequa} (used with $M'=\rho_s$); similarly we have $M(rc_1\cdots c_n)=src_1\cdots c_n\rhd rc_1\cdots c_n$ by Lemma \ref{diheequa} (used with $M'=\lambda_s$). Therefore we have that $M(u)$ must cover the three elements $u=rc_1\cdots c_n l$, $c_1\cdots c_nls$ and $src_1\cdots c_n$. In particular, we must have that both $ls, sr\leq M(u)$. Now $M(u)$ can be obtained by adding a letter $s$ to some reduced expression of $u$. As $r$ and $l$ are respectively the only  left and right descents of $u$, all the reduced expressions of $u$ are obtained by performing braid relations in the subword $c_1\cdots c_n$. Therefore, after a possible relabelling of the letters $c_1,\ldots,c_n$  we have that $M(u)$ has a reduced expression obtained by inserting a letter $s$ in the reduced expression $rc_1\cdots c_nl$ of $u$, and so there are only three possibilities: $u_1=src_1\cdots c_nl$, $u_2=rsc_1\cdots c_nl$ and $u_3=rc_1\cdots c_nls$. But all of these must be excluded since none of them is greater than or equal 
to both $ls$ and $sr$. 
\end{proof}

%The next results establish an important restriction on the elements $w$ admitting a special matching $M$ such that $M(e)=s$ but $M$ does not restrict to a multiplication matching of $w_0(s,t)$ for some $t\in S$.

The next results establish an important restriction on the elements $w$ admitting a special matching $M$  that  does not restrict to a multiplication matching of $w_0(M(e),t)$ for some $t\in S$.
\begin{lem}\label{stct}Let $w\in W$ and $M$ be a special matching of $w$. Let ${c},s,t\in S$, $m_{c,s}=2$, $m_{c,t}\geq 3$, $m_{s,t}\geq 4$. Assume $tst\leq w$ and $M(e)=s$, $M(t)=ts$ and $M(st)=tst$. Then $st{c}st\not \leq w$. If in addition $m_{{c},t}>3$ then $st{c}t\not \leq w$.
\end{lem}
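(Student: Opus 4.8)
The plan is to suppose the relevant element lies in $[e,w]$ and to compute the value of $M$ on it and on one of its lower covers by means of Theorem \ref{ma16}(i); the defining implication of a special matching ($u\lhd v$ and $M(u)\neq v\Rightarrow M(u)\leq M(v)$) will then force a Bruhat inequality forbidden by the Subword Property. First I set up the data. Since $m_{s,t}\geq 4$ we have $tst\neq sts$, so the hypothesis $M(st)=tst$ shows that $M$ is not a multiplication matching on $[e,w_0(s,t)]$ (otherwise $M(st)$ would be $sts$ or $t$); thus $t$ is the distinguished generator of Theorem \ref{ma16}(i). Writing $J=\{r\leq w:M(r)=sr\}$, the relation $cs=sc$ gives $M(c)=cs=sc$, so $c\in J$, while $s,t\notin J$. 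Finally, whenever $sts\leq w$ the value $M(sts)$ lies in $W_{\{s,t\}}$ (as $M$ restricts to $[e,w_0(s,t)]$) and, being $\geq M(st)=tst$ by specialness, has length $4$.

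For the first assertion, suppose $stcst\leq w$. A short descent computation gives $D_R(stcst)=\{t\}$ and $D_R(stcs)=\{s,c\}$, so the factorizations in Theorem \ref{ma16}(i) are $stcst=stc\cdot st$ (with $stc\in W^{\{s,t\}}$ and trivial $J$-part) and $stcs=sts\cdot c$ (with $J$-part $c$ and $W^J$-part $sts$). Substituting $M(st)=tst$ we obtain $M(stcst)=stc\cdot tst=stctst$, and the formula applied to $stcs$ yields $M(stcs)=M(sts)\cdot c$. Since $stcs\lhd stcst$ and $M(stcs)\neq stcst$, specialness forces $M(sts)\cdot c=M(stcs)\leq M(stcst)=stctst$. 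I will contradict this by the Subword Property: $M(sts)\cdot c$ has the form $xc$ with $x\in W_{\{s,t\}}$ of length $4$, whereas no reduced word of $stctst$ has four letters of $\{s,t\}$ preceding an occurrence of $c$. Hence $stcst\not\leq w$.

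For the second assertion, assume $m_{c,t}>3$ and suppose $stct\leq w$. The hypothesis $m_{c,t}>3$ enters exactly here: it gives $tct\neq ctc$, so that $D_R(stct)=\{t\}$ (when $m_{c,t}=3$ the braid relation $tct=ctc$ would put $c\in D_R(stct)$, change the factorization, and indeed leave room for $stct\leq w$). Thus $stct=stc\cdot t$ and $stc=st\cdot c$, and Theorem \ref{ma16}(i) gives $M(stct)=stc\cdot M(t)=stc\cdot ts=stcts$ and $M(stc)=M(st)\cdot c=tst\cdot c=tstc$. As $stc\lhd stct$ and $M(stc)\neq stct$, specialness forces $tstc\leq M(stct)=stcts$; but $tstc=(tst)c$ is again of the form $xc$ with $x\in W_{\{s,t\}}$, $\ell(x)=3$, while $stcts$ (whose unique reduced word is $stcts$, by $m_{c,t}>3$) has only two letters of $\{s,t\}$ before its $c$. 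This contradiction gives $stct\not\leq w$.

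The crux — and the only genuinely nonroutine step — is the pair of non-containments, which I have reduced to a single combinatorial count: in a reduced word of the target element, how many letters from $\{s,t\}$ can precede an occurrence of $c$. For the rigid targets $stcts$ (using $m_{c,t}>3$) and $stctst$ (when $m_{c,t}\geq 4$) this maximum is $2$, which already defeats the required $3$, respectively $4$. The delicate case is $stctst$ with $m_{c,t}=3$: there $tct=ctc$ together with $cs=sc$ produces the braid class $\{stctst,sctcst,cstcst,sctsct,cstsct\}$, and a direct inspection shows that the maximum number of $\{s,t\}$-letters preceding a $c$ is $3<4$, as needed. The remaining descent-set computations that drive the factorizations, together with the verification $\ell(M(sts))=4$, are routine consequences of the Subword Property.
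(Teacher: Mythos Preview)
Your overall strategy---computing $M(stcst)$ and $M(stcs)$ via the formula of Theorem~\ref{ma16}(i) and then extracting a forbidden Bruhat inequality---differs from the paper's, which instead works directly on the coatoms $sct$, $stc$, $tct$ of $stct$: using Lemma~\ref{diheequa} and $K_{3,2}$-avoidance (Proposition~\ref{k32}) it finds $M(sct)=ctst$, $M(stc)=tstc$, $M(tct)=tcts$, and then argues that no element can cover all of $stct$, $ctst$, $tstc$, $tcts$ when $m_{c,t}>3$, while for $m_{c,t}=3$ the forced value $M(stct)=ctstc$ leaves nothing covering both $ctstc$ and $stcst$. The paper's argument never needs to know $M(sts)$.

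Your route is legitimate in principle, but the subword step showing $M(sts)\cdot c\nleq stctst$ has a gap. You claim that, writing $x=M(sts)\in W_{\{s,t\}}$ with $\ell(x)=4$, the inequality $xc\leq stctst$ would force some reduced word of $stctst$ to have four $\{s,t\}$-letters before an occurrence of $c$. This tacitly assumes every reduced expression of $xc$ has its $c$ in last position. That is true for $x=stst$ with $m_{s,t}>4$ (the final $t$ blocks $c$), but it fails for $x=tsts$---a value not excluded by the hypotheses, and forced when $m_{s,t}=4$---since $sc=cs$ gives the reduced expression $t\textrm{-}s\textrm{-}t\textrm{-}c\textrm{-}s$ with only three $\{s,t\}$-letters before $c$. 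In your $m_{c,t}=3$ case the maximum count is exactly $3$ (attained in $sctsct$ and $cstsct$), so ``$3<4$'' no longer contradicts anything. The repair is straightforward---in those two words the three $\{s,t\}$-letters preceding the relevant $c$ read $s,t,s$, whereas $tstcs$ demands the pattern $t,s,t$---but this verification is absent. (A minor separate slip: your parabolic decomposition of $stcs$ is misdescribed, since $s\in D_R(sts)\cap J$ gives $sts\notin W^J$; the correct data are $(stcs)^J=st$ and $(stcs)_J=sc$, though your conclusion $M(stcs)=M(sts)\cdot c$ is nonetheless right because $(u^J)_{\{s,t\}}\cdot{}_{\{s\}}(u_J)=st\cdot s=sts$ and ${}^{\{s\}}(u_J)=c$.)
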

\begin{proof}
 Assume that $st{c}t\leq w$. We consider the following coatoms of $st{c}t$: $x=s{c}t$, $y=st{c}$ and $z=t{c}t$ and we compute their images under $M$. 
 
 Consider the two elements $st$ and ${c}t$ coatoms of $x$. We have $M(st)=tst$ by hypothesis and $M({c}t)={c}ts$ by Lemma \ref{diheequa}.
 Therefore $M(x)$ is the only element covering the three distinct elements $x=s{c}t$, $tst$ and ${c}ts$ (Proposition \ref{k32}). We deduce that $M(x)={c}tst$.
 
 Now consider the two elements $st$ and $t{c}$, coatoms of $y$. We have $M(st)=tst$ and $M(t{c})=t{c}s$ by Lemma \ref{diheequa}. Therefore $M(y)$ is the only element covering $y=st{c}$, $tst$ and $t{c}s$  (Proposition \ref{k32}). We deduce that $M(y)=tst{c}$.
 
 We also have $M(z)=t{c}ts$ by Lemma \ref{diheequa}. 
 
 So $M(st{c}t)$ covers $st{c}t$, ${c}tst$, $tst{c}$ and $t{c}ts$. As ${c}tst$ and $tst{c}$ have only one reduced expression we deduce that necessarily $M(st{c}t)={c}tst{c}$. But this element does not cover $st{c}t$ if $m_{{c},t}>3$ and so the proof is complete in this case.
 
 %Assume now that  $m_{{c},t}=3$ and that $st{c}st\leq w$. It follows from the previous discussion that $M(st{c}t)={c}tst{c}$. To determine $M(st{c}st)$ we consider its coatoms: $a=st{c}t$ and $b=st{c}s$. We already know $M(a)={c}tst{c}$. To compute $M(b)$ we have to distinguish two cases according to i) $M(sts)=stst$ or ii) $M(sts)=tsts\neq stst$.
 %
% In case i) we consider the following coatoms of $b$: $sts$ and $st{c}=y$. As $M(sts)=stst$ and $M(st{c})=tst{c}$ he have that $M(b)$ is the unique element covering $st{c}s$, $stst$ and $tst{c}$, i.e. $M(b)=stst{c}$. Therefore $M(st{c}st)$ is the unique element covering $st{c}st, {c}tst{c}, stst{c}$ and one can easily verify that such element does not exist.
 %
 %In case ii) we proceed as in the case i) by considering the same coatoms of $b$: $sts$ and $st{c}$ and so we have that $M(b)$ is the unique element covering $st{c}s$, $tsts$ and $tst{c}$. It follows that $M(b)=tsts{c}$. Therefore $M(st{c}st)$ is the unique element covering $st{c}st, {c}tst{c}, tsts{c}$ and one can easily verify that such element does not exist.
 %

 Assume now that  $m_{{c},t}=3$ and, by contradiction, that $st{c}st\leq w$. It follows from the previous discussion that $M(st{c}t)={c}tst{c}$ and, since $stct\lhd stcst$, we deduce that $M(stcst)$  must be an element covering both ${c}tst{c}$ and $st{c}st$; but a direct check shows that there is no such element.

% To determine $M(st{c}st)$, we consider its coatoms: $a=st{c}t$ and $b=st{c}s$. We already know $M(a)={c}tst{c}$. To compute 
%$M(b)$,   we consider its coatoms: $sts$ and $st{c}=y$.
%As $M(st{c})=tst{c}$,  we have that $M(b)$ is the unique element covering $st{c}s$, $tst{c}$ and $M(sts)$, i.e. 
%\begin{itemize}
%\item $M(b)=stst{c}$, if $M(sts)=stst$;
%\item $M(b)=tsts{c}$, if $M(sts)=tsts\neq stst$. 
%\end{itemize}
%In both cases, we get a contradiction since  $M(st{c}st)$ would be an element covering $st{c}st$, ${c}tst{c}$, and $M(b)$, and one can easily verify that such element does not exist. 
\end{proof}

\begin{pro}\label{stcst}
 Let $w\in W$ and $M$ be a special matching of $w$. Let ${c},s,t\in S$, $m_{s,c}=2$, $m_{t,c}\geq 3$, $m_{s,t}\geq 4$. Assume $M(e)=s$, $M(t)=ts$ and $M\not \equiv \rho_s$ on $[e,w_0(s,t)]$. Then 
 \begin{enumerate}
  \item if $m_{t,{c}}>3$ then $st{c}t\not \leq w$;
  \item if $m_{t,{c}}=3$ then $st{c}st\not \leq w$.
 \end{enumerate}

\end{pro}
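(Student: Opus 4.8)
The plan is to deduce Proposition \ref{stcst} from Lemma \ref{stct} by upgrading the hypothesis $M \not\equiv \rho_s$ on $[e,w_0(s,t)]$ into the specific local data that Lemma \ref{stct} requires, namely the condition $M(st)=tst$ together with $tst\leq w$. The two propositions have identical conclusions, so once these hypotheses are matched the result follows immediately. Thus the entire content of the argument is the following implication: if $M$ is a special matching with $M(e)=s$, $M(t)=ts$, and $M$ is not the multiplication matching $\rho_s$ on the dihedral interval $[e,w_0(s,t)]$, then in fact $M(st)=tst$ (and in particular $tst\leq w$, so that $st\leq w$ and $w_0(s,t)$ has length $\geq 3$).

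First I would record that $M(e)=s$ and $M(t)=ts\neq ts$ already pin down $M$ on the bottom of the dihedral interval $W_{\{s,t\}}\cap[e,w]=[e,w_0(s,t)]$: we have $M(e)=s$, $M(s)=e$, and $M(t)=ts$. Since $M(t)=ts$ forces $M(t)\rhd t$, the element $ts$ lies below $w$, so $m_{s,t}\geq 3$ and the interval has length at least $3$; combined with $m_{s,t}\geq 4$ from the statement, the dihedral interval $[e,w_0(s,t)]$ is a ``long'' two-sided path. On such a dihedral interval the only special matchings are $\lambda_s,\lambda_t,\rho_s,\rho_t$ and their behavior is completely determined rank by rank. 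The next step is to run the N-avoiding property (Proposition \ref{nonconfigurazione}, valid since all rank $2$ intervals in a Coxeter group have cardinality $4$) at the element $st$: its two coatoms are $s$ and... in a dihedral interval $st$ covers only $s$ (when $m_{s,t}>2$), so I instead compare the covering pairs $s\lhd st$ and $s\lhd ts$. Because $M(s)=e\lhd s$ while $M(st)$ and $M(ts)$ must respect the specialness constraints emanating from $M(t)=ts$, a short case analysis on a dihedral interval shows $M(st)\neq s\cdot(\text{anything making }M=\rho_s)$; concretely, the hypothesis $M\not\equiv\rho_s$ rules out $M(st)=sts$ and forces $M(st)=tst$, the only other neighbor of $st$ of the correct rank in the dihedral interval.

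The cleanest way to carry out this forcing is via Lemma \ref{diheequa}: a special matching with fixed value $M(e)=s$ is determined by its restriction to $\bigcup_{x\in S}[e,v_0(s,x)]$, and within the single dihedral interval $[e,w_0(s,t)]$ the condition $M(t)=ts$ already distinguishes $M$ from $\rho_s$ only through the pattern $\ldots,tst,\ldots$ versus $\ldots,sts,\ldots$ on successive ranks. I would argue that if $M(st)=sts$ then, propagating the alternation upward by specialness and the absence of $K_{3,2}$ (Proposition \ref{k32}), $M$ would coincide with $\rho_s$ on the whole of $[e,w_0(s,t)]$, contradicting the hypothesis; hence $M(st)=tst$. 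With $M(st)=tst$ in hand, the element $tst\leq w$ automatically (it is covered by, or equal to a coatom structure forcing it below $w$, since $M(st)=tst\leq w$ and $st\leq w$), so all hypotheses of Lemma \ref{stct} are met, and its conclusions give exactly statements (1) and (2).

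The main obstacle I anticipate is the dihedral forcing step: showing rigorously that $M(t)=ts$ together with $M\not\equiv\rho_s$ implies $M(st)=tst$ rather than merely being consistent with it. The subtlety is that a special matching on a long dihedral interval can in principle switch between the two multiplication matchings only if it is not a multiplication matching at all, and one must verify that there is no ``hybrid'' special matching interpolating between $\rho_s$ and, say, $\lambda_t$, that would have $M(t)=ts$ but $M(st)=sts$. This is precisely the place where the rank-$2$ Coxeter classification of special matchings (every special matching of a dihedral interval is one of $\lambda_s,\lambda_t,\rho_s,\rho_t$) does the real work, and I would invoke it together with Proposition \ref{nonconfigurazione} to close the gap cleanly.
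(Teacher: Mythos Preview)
Your reduction to Lemma~\ref{stct} hinges on the claim that $M\not\equiv\rho_s$ on $[e,w_0(s,t)]$ together with $M(e)=s$ and $M(t)=ts$ forces $M(st)=tst$. This claim is false, and with it the entire strategy collapses. Special matchings of a dihedral interval are \emph{not} limited to $\lambda_s,\lambda_t,\rho_s,\rho_t$: hybrid matchings do exist. For a concrete counterexample, take $m_{s,t}=5$ and define $M$ on $[e,ststs]$ by
\[
M(e)=s,\quad M(t)=ts,\quad M(st)=sts,\quad M(tst)=stst,\quad M(tsts)=ststs.
\]
This $M$ agrees with $\rho_s$ through rank~$2$ (in particular $M(st)=sts$, not $tst$), then switches to $\lambda_s$ from rank~$3$ upward. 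A direct check of all covering pairs shows it is $N$-avoiding, hence special by Proposition~\ref{nonconfigurazione}, yet $M\not\equiv\rho_s$ while $M(st)=sts$. So the ``dihedral forcing step'' you flagged as the main obstacle is not merely subtle---it is simply wrong, and Lemma~\ref{diheequa} cannot rescue it: that lemma says $M$ is determined by its values on all dihedral subintervals, not that its restriction to one of them must be a multiplication matching.

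The paper's proof proceeds quite differently. It introduces the minimal element $x_0\leq w_0(s,t)$ with $M(x_0)\neq x_0s$; Lemma~\ref{stct} covers only the case $x_0=st$. When $x_0$ is longer (as in the counterexample above, where $x_0=tst$), the argument instead invokes \cite[Lemma~6.2]{BCM1}, which restricts the elements covering $x_0$ that also lie above $c$, and then analyzes subword occurrences of $s\textrm{-}t\textrm{-}c\textrm{-}t$ in a fixed reduced expression of $w$. This case split on the position of $x_0$ is essential and cannot be bypassed by a local computation at $st$.
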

\begin{proof}
Clearly we may assume that $c\leq w$ otherwise we are done.

 Let $x_0$ be the minimal element $\leq w_0(s,t)$ such that $M(x_0)\neq x_0s$. If $x_0=st$ the result follows from  Lemma~\ref{stct}. Otherwise we know by Lemma~6.2 of \cite{BCM1} that the only elements $u\leq w$ covering $x_0$ and such that ${c}\leq u$ are $x_0{c}$ and ${c}x_0$. We fix a reduced expression for $w$. The result will follow if we show that the word $s\textrm{-}t\textrm{-}c\textrm{-}t$ is not a subword of the fixed reduced expression of $w$. In fact, the element $st{c}t$ has only one reduced expression if $m_{t,{c}}>3$, while $st{c}st$ has two reduced expressions, i.e. $s\textrm{-}t\textrm{-}{c}\textrm{-}s\textrm{-}t$ and $s\textrm{-}t\textrm{-}s\textrm{-}{c}\textrm{-}t$ but both of them show the subword $s\textrm{-}t\textrm{-}{c}\textrm{-}t$. Consider a subword $\cdots \textrm{-}t\textrm{-}s\textrm{-}t$ of the fixed reduced expression for $w$  which is a reduced expression for $x_0$ (observe that the unique reduced expression for $x_0$ ends with $t$ by the minimality property of $x_0$). Now 
consider any possible occurrence of the subword $s\textrm{-}t\textrm{-}{c}\textrm{-}t$. By considering all possible positions of the letter ${c}$ with respect to the fixed reduced expression for $x_0$, one can show that we always contradict Lemma 6.2 of \cite {BCM1}. For example, if the letter $c$ appears to the right of the chosen reduced expression for $x_0$ we have that the element $x_0tct$ covers $x_0$ but is distinct from $x_0c$ and $cx_0$; similar arguments apply in the other cases.
\end{proof}
\begin{pro}\label{R4-R5}
 Let $w\in W$, $s,t\in S$, $s,t\leq w$, and $M$ be a special matching of $w$ such that $M(e)=s$, $M(t)=ts$ and $J=\{r\in S:\, M(r)=sr\}$. Then
 \begin{enumerate}
  \item if $\alpha\in \{s,t\}$ is such that $\alpha\leq (w^J)^{\{s,t\}}$, then $M(\alpha u)=\alpha M(u)$ for all $u\leq w_0(s,t)$;
  \item if $s\leq \,^{\{s\}}\!(w_J)$, then $M(us)=M(u)s$ for all $u\leq w_0(s,t)$.
 \end{enumerate}

\end{pro}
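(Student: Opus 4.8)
The plan is to reduce to the case where $M$ genuinely ``bends'' on the dihedral interval and then to read off both identities as commutation statements between $M$ and a one–sided multiplication matching.

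First I would record the elementary facts that set up the whole argument. Since $M(s)=e=ss$ we always have $s\in J$, and for $m_{s,t}\ge 3$ the only multiplication matching of $[e,w_0(s,t)]$ compatible with $M(e)=s$ and $M(t)=ts$ is $\rho_s$ (the case $m_{s,t}=2$ being subsumed). Thus, if $M$ restricts to a multiplication matching on $[e,w_0(s,t)]$ it equals $\rho_s$ there, and both statements are immediate computations, $M(\alpha u)=\alpha us=\alpha M(u)$ and $M(us)=u=M(u)s$, where $W_{\{s,t\}}\cap[e,w]=[e,w_0(s,t)]$ keeps the relevant products inside the interval. So I would assume from now on that $M$ is not a multiplication matching on $[e,w_0(s,t)]$ and bring in Proposition \ref{734}(2): with $x_0$ the minimal element of $[e,w_0(s,t)]$ such that $M(x_0)\ne x_0s$ and $\alpha_0\in D_L(x_0)$, one has $\alpha_0\nleq (u^J)^{\{s,t\}}$ for every $u\le w$. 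A short argument then pins down this first bend: the two elements covered by $x_0$ are, by minimality of $x_0$, already matched by $\rho_s$ and hence not to $x_0$, so $M(x_0)$ covers $x_0$; the Subword Property (Theorem \ref{subword}) forces $M(x_0)\in W_{\{s,t\}}$, and since $M(x_0)\ne x_0 s$ one gets $M(x_0)=\bar\alpha_0\,x_0=\lambda_{\bar\alpha_0}(x_0)$, where $\bar\alpha_0$ is the generator different from $\alpha_0$. In particular, by Lemma \ref{res}(1), $M$ restricts to a special matching of $[x_0,w_0(s,t)]$.

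For part (1) the key observation is that $M(\alpha u)=\alpha M(u)$ for all $u\le w_0(s,t)$ is precisely the assertion that $M$ and $\lambda_\alpha$ commute as permutations of $[e,w_0(s,t)]$. The hypothesis $\alpha\le (w^J)^{\{s,t\}}$ plays two roles: combined with Proposition \ref{734}(2) it forces $\alpha\ne\alpha_0$, hence $\alpha=\bar\alpha_0$ and the first bend of $M$ is exactly $\lambda_\alpha$; and, via the length–additive factorization of Proposition \ref{fattorizzo}, it guarantees $\alpha\in D_L(w_0(s,t))$, so that $\lambda_\alpha$ is itself a special matching of $[e,w_0(s,t)]$. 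Now $M$ and $\lambda_\alpha$ are two special matchings of a dihedral interval, so by Lemma \ref{res}(2) each orbit $\langle M,\lambda_\alpha\rangle(v)$ is a dihedral interval, and commutation amounts to all these orbits having rank at most $2$. Below $x_0$ one has $M=\rho_s$, which commutes with $\lambda_\alpha$ because left and right multiplications always commute; at $x_0$ the two matchings agree, by the previous paragraph; and the induction along $[x_0,w_0(s,t)]$ closes provided no later bend of $M$ is a \emph{left} multiplication by $\alpha_0$. This last exclusion is where the presence of an outside generator witnessing $\alpha\le (w^J)^{\{s,t\}}$ is used, through the forbidden configurations of Lemma \ref{stct} and Proposition \ref{stcst}, which rule out exactly the sub–configurations that a rank $\ge 3$ orbit would produce.

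Part (2) is entirely parallel, now comparing $M$ with $\rho_s$: the identity $M(us)=M(u)s$ says that $M$ and $\rho_s$ commute. Here the obstruction to commutation is a \emph{right} multiplication by $t$ among the bends of $M$ (right multiplication by $s$, and left multiplication by either generator, all commute with $\rho_s$, whereas $\rho_t$ does not), and the role of the hypothesis $s\le{}^{\{s\}}(w_J)$ — together with the structure of $J$ given by Proposition \ref{734}(1) — is to ensure both that $\rho_s$ is a special matching of $[e,w_0(s,t)]$ and that no such $\rho_t$–bend occurs, again via Lemma \ref{stct} and Proposition \ref{stcst}. The main obstacle in both parts, and the genuinely technical heart of the proof, is precisely this control of the bend structure of $M$ on $[e,w_0(s,t)]$: translating the parabolic conditions on $w$ into the statement that the relevant $\langle M,\lambda_\alpha\rangle$– and $\langle M,\rho_s\rangle$–orbits are squares. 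Everything else — the reduction to the non–multiplication case, the identification of the first bend through Proposition \ref{734}, and the commutation identities for one–sided multiplications — is routine.
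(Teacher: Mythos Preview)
Your reduction to the non--multiplication case and the identification of the first bend via Proposition~\ref{734}(2) are fine, and the reformulation of both parts as commutation of $M$ with $\lambda_\alpha$ (resp.\ $\rho_s$) on $[e,w_0(s,t)]$ is exactly right. The gap is in the step you call ``the genuinely technical heart'': you assert that Lemma~\ref{stct} and Proposition~\ref{stcst} rule out the rank~$\ge 3$ orbits, but you never say how, and as stated they do not. Those two results constrain $w$ only for a generator $c$ with $m_{s,c}=2$ and $m_{t,c}\ge 3$; the ``outside generator'' $r$ witnessing $\alpha\le (w^J)^{\{s,t\}}$ is merely required not to commute with $\alpha$, so when $\alpha=s$ you have $m_{s,r}\ge 3$ and Proposition~\ref{stcst} is simply inapplicable. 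Even when $\alpha=t$ and $r$ happens to commute with $s$, the conclusion $stct\nleq w$ (or $stcst\nleq w$) lives outside the dihedral interval and you give no mechanism linking it to a second bend of $M$ inside $[e,w_0(s,t)]$. The same objection applies verbatim to your treatment of part~(2).

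The paper does not use Lemma~\ref{stct} or Proposition~\ref{stcst} here at all. Instead it invokes the explicit formula of Theorem~\ref{ma16}(i): taking $x$ minimal with $M(\alpha x)\ne\alpha M(x)$, the rank bound on the $\langle M,\lambda_\alpha\rangle$--orbit forces $\ell(x)<\ell(w_0(s,t))-2$, hence $x\le (w^J)_{\{s,t\}}\cdot{}_{\{s\}}(w_J)$, hence the auxiliary element $\alpha r x$ lies below $w$. One then computes $M(\alpha r x)=\alpha r\,M(x)$ directly from Theorem~\ref{ma16}(i) (checking the parabolic pieces), and the covering $\alpha x\lhd \alpha r x$ together with the defining property of a special matching pins $M(\alpha x)$ under $\alpha r\,M(x)$, forcing $M(\alpha x)=\alpha M(x)$. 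Part~(2) is the mirror argument with an element $xps$, $p\in J$ not commuting with $s$. So the outside generator is used to build an auxiliary element on which Theorem~\ref{ma16} computes $M$, not to feed a forbidden--subword lemma; your sketch is missing precisely this use of Theorem~\ref{ma16}.
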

\begin{proof}
 We can clearly assume that $M\not\equiv \rho_s$ on $[e,w_0(s,t)]$. By Theorem \ref{ma16} we have that 
\[M(u)=(u^{J})^{\{s,t\}}\, \cdot \, M\Big ((u^{J})_{\{s,t\}} \, \cdot \, _{\{s\}} (u_{J})\Big )\, \cdot \,
 ^{\{s\}}(u_{J}),\]
for all $u \leq w$. 
%Moreover, since  $^{\{s\}} (v_{J})= \, ^{\{s\}} (M(v)_{J})$, we may assume that $M(v)\lhd v$, so that $M$ restricts to a special matching of $[e,v]$ by Lemma \ref{res}, (1). Therefore, with no lack of generality we can assume that $v=w$. 
By  Proposition \ref{734}, (2), $s$ and $t$ cannot be both $\leq (w^J)^{\{s,t\}}$. This  implies that $[e,w_0(s,t)]$ has at most 4 elements more than 
$[e,(w^{J})_{\{s,t\}} \,  \cdot  \,  _{\{s\}} (w_{J})]$, since $\ell(w_0(s,t))$ can be at most 
$2+ \ell((w^{J})_{\{s,t\}} \,  \cdot  \,  _{\{s\}} (w_{J}))$.
(It may happen that  $\ell(w_0(s,t)) = 2+ \ell((w^{J})_{\{s,t\}} \,  \cdot  \,  _{\{s\}} (w_{J}))$ only if $s \leq  \, ^{\{s\}}(w_{J})$ and there exists  $\alpha \in \{s,t\}$  such that $\alpha \leq (w^J)^{\{s,t\}}$; the converse is not necessarily true.)

Let us prove (1). The condition $\alpha \leq (w^J)^{\{s,t\}}$ implies that $\alpha \in D_L(w_0(s,t))$, which means that  $\lambda_{\alpha}$ is a special matching of $w_0(s,t)$. Since $\alpha \notin D_R ( (w^J)^{\{s,t\}} )$, there must be 
$r\in S \setminus \{s,t\}$, $r$ not commuting with $\alpha$, such that $\alpha r \leq (w^J)^{\{s,t\}}$. 
By contradiction, let $x $  be minimal among the elements in $ [e,w_0(s,t)] $ such that $M(\alpha x)  \neq \alpha M(x)$. By the minimality of $x$ we have that $x$ is the minimal element of an orbit 
of $\langle M, \lambda_{\alpha} \rangle $ of cardinality at least 6 and so  $\ell (x)< \ell(w_0(s,t)) -2$, $x \leq (w^{J})_{\{s,t\}} \,  \cdot  \,  _{\{s\}} (w_{J})$ and so,  by the Subword Property (Theorem \ref{subword}),
 $\alpha r x \leq w$. (Recall that $\langle M, \lambda_{\alpha} \rangle $ denotes the group generated by $M$ and $\lambda_{\alpha}$.)  Moreover, the minimality of $x$ ensures that  $\alpha x\rhd x$ and so $\alpha rx\rhd\alpha x\rhd x$.
Now we remark that $M(\alpha rx)=\alpha r M(x)$: this follows by observing that $((\alpha r x)^J)^{\{s,t\}}=\alpha r$ and that $^{\{s\}}((\alpha rx)_J)=e$, and using Theorem \ref{ma16}, (i). In fact, if $r\notin J$ this is clear and if $r\in J$ we have $rs=sr$ by Proposition \ref{734}, (1) and so $\alpha=t$, $x=st\cdots$ and the result again follows easily. 
By the definition of a special matching, 
$M(\alpha x)\lhd M(\alpha r x) = \alpha r M(x)$, but this happens if and only if   $M(\alpha x) = \alpha M(x)$. This is a contradiction.

Let us  prove  (2).   Note that, since $t \not \leq \, ^{\{s\}} (w_{J})$, we have that $s \in D_R ( w_0(s,t))$, that is, $\rho_s$ is a special matching of $w_0(s,t)$.

By contradiction, let   $x $  be minimal among the elements in $ [e,w_0(s,t)] $ such that $M( xs)  \neq  M(x)s$. Hence $x$ is the minimal element of an orbit of $\langle M, \rho_{s} \rangle$ of cardinality at least 6: thus $\ell (x)< \ell(w_0(s,t)) -2$ and $x \leq (w^{J})_{\{s,t\}} \,  \cdot  \,  _{\{s\}} (w_{J})$.  
The hypothesis $s\leq \, ^{\{s\}} (w_{J})$ implies that there exists a generator $p\in J$ 
not commuting with $s$ such that   $ps\leq \, ^{\{s\}} (w_{J})$. By the Subword Property,  we   have  $xps \leq w$.

 Since  $x$ 
is the minimal element of an orbit 
of $\langle M, \rho_{s} \rangle $, we have $x \lhd \rho_{s} (x) = xs$ (i.e., $\ell(xs)=\ell(x)+1$) and hence   $xs \lhd xps$,  since  $\ell(xs) = \ell(x p s)-1$.
Now observe that $M(xps)=M(x)ps$. By the definition of a special matching, 
$M(xs)$ should be $\leq M(xps) =  M(x) p s $, but this happens if and only if   $M(xs) = M(x)s$. This is a contradiction.
%Now $M(xps)= M(x) ps$ must cover $M(xs)$ by the definition of a special matching but the unique element in $[e,v_0(s,t)]$ covered by $M(x)ps$ is $M(x)s$.  Hence $M(xs)=M(x)s$, which is a contradiction.
\end{proof}

\section{Left and right systems}\label{leftright}
Propositions \ref{734} and \ref{R4-R5} lead us to introduce the following definition, which has a right version and, symmetrically, a left version. 

\begin{defi}
A {\em  right system for $w$} is a quadruple $\mathcal R=(J,s,t,M_{st})$ such that:

\begin{enumerate}

\item[R1.] $J\subseteq S$, $s\in J$, $t\in S\setminus J$, and $M_{st}$ is a special matching of $w_0(s,t)$ such that  $M_{st}(e)=s$ and  $M_{st}(t)=ts$;

\item[R2.] $(u^{J})^{\{s,t\}}\, \cdot \, M_{st} \Big ((u^{J})_{\{s,t\}} \, \cdot \, _{\{s\}} (u_{J})\Big )\, \cdot \,
 ^{\{s\}}(u_{J}) \leq w$,  for all $u\leq w$;

 \item[R3.] 
%$w^J$ is in the parabolic group generated by $S\setminus J$ and the elements in $J$ that commute with $s$;
%if $r\in S$, $r \leq w^J$, then either $r\notin J$ or $r$ commutes with $s$;
if $r\in J$ and $r \leq w^J$, then $r$ and $s$ commute;

\item[R4.]
\label{ddddxxxx}
%\begin{enumerate}
%\item  if $s\leq (w^J)^{\{s,t\}}$ and  $t\leq (w^J)^{\{s,t\}}$, then $M_{st}= \rho_s$,
if $\alpha\in \{s,t\}$ is such that $\alpha \leq (w^J)^{\{s,t\}}$, then $M_{st}$ commutes with $\lambda_\alpha$ on $[e,w_0(s,t)]$;
%\item   if $s\leq (w^J)^{\{s,t\}}$, then $M_{st}$ commutes with $\lambda_s$,
%\item   if $t\leq (w^J)^{\{s,t\}}$, then $M_{st}$ commutes with $\lambda_t$;
%\end{enumerate}

%\item[R5.] if $v\leq w$ and $s\leq \, ^{\{s\}} (v_{J})$, then $M_{st}$ commutes with $\rho_s$ on $[e,v_0(s,t)]$.
\item[R5.] if $s\leq \, ^{\{s\}} (w_{J})$, then $M_{st}$ commutes with $\rho_s$ on $[e,w_0(s,t)]$.
\label{pure}
\end{enumerate} 
\end{defi}
Some additional properties of right systems can be immediately deduced from their definition. 
\begin{lem}\label{stleq}
Let  $\mathcal R=(J,s,t,M_{st})$ be a right system for $w$ such that $s,t\leq (w^J)^{\{s,t\}}$. Then 
\[
 M_{st}(x)=xs
\]
for all $x\leq w_0(s,t)$.
\end{lem}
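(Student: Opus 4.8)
The plan is to reduce the assertion to an explicit computation inside the dihedral subgroup $W_{\{s,t\}}$, driven by the commutation axiom R4.

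First I would record the structural meaning of the hypothesis $s,t\le (w^J)^{\{s,t\}}$. As established in the proof of Proposition~\ref{R4-R5}, for $\alpha\in\{s,t\}$ the condition $\alpha\le (w^J)^{\{s,t\}}$ forces $\alpha\in D_L(w_0(s,t))$. Applying this to both $\alpha=s$ and $\alpha=t$, we get that $s$ and $t$ are \emph{both} left descents of $w_0(s,t)$. Since in a dihedral parabolic the only element with two distinct left descents is the longest element (and its existence forces $m_{s,t}<\infty$), we conclude that $w_0(s,t)$ is the top element of $W_{\{s,t\}}$ and that $[e,w_0(s,t)]=W_{\{s,t\}}$. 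In particular $\lambda_s$ and $\lambda_t$ are both special matchings of $w_0(s,t)$, so R4 is applicable in both instances and tells us that $M_{st}$ commutes with $\lambda_s$ and with $\lambda_t$ on $[e,w_0(s,t)]$.

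I would then prove $M_{st}(x)=xs$ by induction on $\ell(x)$. Put $m=m_{s,t}$ and, for $0\le k\le m$, let $\alpha_k$ (resp. $\beta_k$) be the element of length $k$ whose reduced word begins with $s$ (resp. $t$), so that $\alpha_0=\beta_0=e$, $\alpha_1=s$, $\beta_1=t$, and $\alpha_m=\beta_m=w_0(s,t)$. The base cases $x\in\{e,s,t\}$ are precisely the values prescribed by R1, namely $M_{st}(e)=s=es$, $M_{st}(s)=e=ss$, and $M_{st}(t)=ts$. For the inductive step with $k\ge 2$ I use the factorizations $\alpha_k=s\,\beta_{k-1}$ and $\beta_k=t\,\alpha_{k-1}$, which are length-additive because $s\notin D_L(\beta_{k-1})$ and $t\notin D_L(\alpha_{k-1})$. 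Commutation with $\lambda_s$, commutation with $\lambda_t$, and the inductive hypothesis then yield
\[
M_{st}(\alpha_k)=s\,M_{st}(\beta_{k-1})=s\,\beta_{k-1}\,s=\alpha_k s,\qquad
M_{st}(\beta_k)=t\,M_{st}(\alpha_{k-1})=t\,\alpha_{k-1}\,s=\beta_k s.
\]
As every element of $[e,w_0(s,t)]$ is some $\alpha_k$ or $\beta_k$ — the top element $w_0(s,t)=\alpha_m$ being covered by the first identity — this gives $M_{st}(x)=xs$ for all $x\le w_0(s,t)$.

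The verifications of the two factorizations and the enumeration of the dihedral elements are routine, so the only substantive point is the structural reduction carried out first: that $s,t\le (w^J)^{\{s,t\}}$ makes \emph{both} $s$ and $t$ left descents of $w_0(s,t)$, hence that $w_0(s,t)$ is the longest element and that \emph{both} $\lambda_s$ and $\lambda_t$ are available. This is exactly what renders both instances of R4 usable, and it is essential: having only one of the two left multiplications would leave the opposite family (the $\beta_k$'s or the $\alpha_k$'s) unreachable by the recursion above.
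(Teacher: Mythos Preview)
Your proof is correct and follows essentially the same approach as the paper's: both argue by induction on $\ell(x)$, peeling off a left descent $\alpha\in\{s,t\}$ and invoking R4 to commute $M_{st}$ past $\lambda_\alpha$. The paper's version is slightly terser---it restricts to those $x$ with $xs\rhd x$ (the other half following since $M_{st}$ is an involution) and picks an arbitrary $\alpha\in D_L(x)$ rather than enumerating the $\alpha_k,\beta_k$ explicitly---but the substance is identical, and your explicit observation that $w_0(s,t)$ must be the longest element of $W_{\{s,t\}}$ is exactly what makes both applications of R4 legitimate.
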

\begin{proof}
 It is enough to show that if $x\leq w_0(s,t)$ is such that $xs\rhd x$ then $M(x)=xs$. To show this we proceed by induction on $\ell(x)$, the result being clear if $\ell(x)=0$. So let $\ell(x)>0$, $\alpha\in D_L(x)$ and $y=\alpha x\lhd x$. Then by induction hypothesis we have $M(y)=ys$ and so $M(x)\rhd x$ by the definition of special matchings. Moreover, we have $M(\alpha x)=\alpha M(x)$ by Property R4 and since $M(\alpha x)=M(y)=ys=\alpha xs$ the result follows.  
\end{proof}
\begin{lem}\label{davaw}
 Let $\mathcal R=(J,s,t,M_{st})$ be a right system for $w$, and $v\leq w$. Then $s\leq \, ^{\{s\}} (v_{J})$ implies $s\leq \, ^{\{s\}} (w_{J})$.
\end{lem}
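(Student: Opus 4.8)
The plan is to argue by contradiction, assuming $s\le \, ^{\{s\}}(v_{J})$ while $s\not\le \, ^{\{s\}}(w_{J})$, and to derive an incompatibility from Property R3 by a positional subword argument. I first translate both assumptions into statements about reduced words. Writing $K=J\setminus\{s\}$, an element of $W_{J}$ lies in $W_{K}$ exactly when it does not contain the letter $s$; hence $s\not\le \, ^{\{s\}}(w_{J})$ means $z:=\, ^{\{s\}}(w_{J})\in W_{K}$, and from the length-additive factorizations $w=w^{J}\cdot w_{J}$ and $w_{J}=\, _{\{s\}}(w_{J})\cdot \, ^{\{s\}}(w_{J})$ I obtain $w=w^{J}\cdot s^{\varepsilon}\cdot z$ with $\varepsilon\in\{0,1\}$. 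I then fix a reduced expression $\mathbf a\textrm{-}s^{\varepsilon}\textrm{-}\mathbf z$ for $w$, where $\mathbf a$ is reduced for $w^{J}$ and $\mathbf z$ is reduced for $z$; note that $\mathbf z$ contains no letter $s$, so every occurrence of $s$ in $w$ lies in the block $\mathbf a\textrm{-}s^{\varepsilon}$.

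Next I extract the key witness from the assumption on $v$. Since $s\le \, ^{\{s\}}(v_{J})$ and, by the very definition of the left factorization, $s\notin D_{L}(\, ^{\{s\}}(v_{J}))$, the same elementary fact used in the proof of Proposition \ref{R4-R5}(2) yields a generator $p\in J$ that does not commute with $s$ (so $m_{s,p}\ge 3$) and satisfies $ps\le \, ^{\{s\}}(v_{J})$. As $^{\{s\}}(v_{J})\le v_{J}\le v\le w$, this gives $ps\le w$.

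The heart of the argument is to contradict this by showing $ps\not\le w$. By Property R3, every $r\in J$ with $r\le w^{J}$ commutes with $s$; since $p$ does not commute with $s$, it follows that $p\not\le w^{J}$, and hence by the Subword Property the word $\mathbf a$ contains no letter $p$. Therefore, in the fixed reduced expression $\mathbf a\textrm{-}s^{\varepsilon}\textrm{-}\mathbf z$, every occurrence of $p$ lies inside $\mathbf z$, whereas every occurrence of $s$ lies in $\mathbf a\textrm{-}s^{\varepsilon}$, strictly to the left of $\mathbf z$. Thus no $s$ can follow a $p$, so $p\textrm{-}s$ is not a subword of this reduced expression; since $p\textrm{-}s$ is the unique reduced expression of $ps$, the Subword Property (Theorem \ref{subword}) gives $ps\not\le w$, the desired contradiction.

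The substantive point, and the only place where a hypothesis of the right system is genuinely needed, is that the property \lq\lq$s\le \, ^{\{s\}}(u_{J})$\rq\rq\ is not monotone along Bruhat order in a general interval (one can produce $v\le w$ for which it holds at $v$ but fails at $w$), so some structural assumption is unavoidable; recognizing that Property R3 is exactly what excludes the offending configuration is the crux. The two technical steps to execute with care are the extraction of a non-commuting $p$ with $ps\le \, ^{\{s\}}(v_{J})$ (a short induction on length, taking the first left descent, precisely as in Proposition \ref{R4-R5}) and the remark that a single fixed reduced expression of $w$ suffices to rule out $ps\le w$, which is what makes the positional argument clean.
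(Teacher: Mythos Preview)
Your proof is correct and follows essentially the same approach as the paper: both extract a generator $p\in J$ not commuting with $s$ with $ps\le w$, invoke Property~R3 to get $p\not\le w^{J}$, and then use a positional subword argument in a reduced expression of $w$ compatible with the $W^{J}\cdot W_{J}$ factorization. The only cosmetic difference is that the paper argues directly (showing $rs\le w_{J}$ and then applying Proposition~\ref{mantiene} to obtain $s\le\,^{\{s\}}(w_{J})$), whereas you argue by contradiction, assuming $s\not\le\,^{\{s\}}(w_{J})$ to force all occurrences of $s$ to the left of all occurrences of $p$ and thereby contradict $ps\le w$.
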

\begin{proof}
 If $s\leq \, ^{\{s\}} (v_{J})$ then there exists $r\in J$ such that $rs\neq sr$ and $rs\leq \, ^{\{s\}} (v_{J})\leq w$. Therefore $r\textrm{-}s$ must be a subword of every reduced expression of $w$. Since $r\not \leq w^J$ by Property R3, this implies that $rs\leq w_J$ and in particular we necessarily have $s\leq \, ^{\{s\}} (w_{J})$, by Proposition \ref{mantiene}.
\end{proof}

\begin{defi}
 A {\em left system for $w$} is a right system for $w^{-1}$. It is an immediate verification that the datum of a left system is equivalent to the datum of a quadruple  $\mathcal L=(J,s,t,M_{st})$ such that:

\begin{enumerate}

\item[L1.] $J\subseteq S$, $s\in J$, $t\in S\setminus J$, and $M_{st}$  is a special matching of $w_0(s,t)$ such that $M_{st}(e)=s$ and  $M_{st}(t)=st$;

\item[L2.] $    (_Ju)^{\{s\}} \, \cdot \, M_{st} \Big( \, (_Ju)_{\{s\}}  \,  \cdot  \,  _{\{s,t\}} (^J  u) \Big) \, 
\cdot \, ^{\{s,t\}}(^J u)    \leq w$,  for all $u\leq w$;

 \item[L3.] 
%$w^J$ is in the parabolic group generated by $S\setminus J$ and the elements in $J$ that commute with $s$;
%if $r\in S$, $r \leq w^J$, then either $r\notin J$ or $r$ commutes with $s$;
if $r\in J$ and $r \leq \,^Jw$, then $r$ and $s$ commute;

\item[L4.]
if $\alpha\in \{s,t\}$ is such that $\alpha \leq \,^{\{s,t\}}(\,^Jw)$, then $M_{st}$ commutes with $\rho_\alpha$ on $[e,w_0(s,t)]$;

\item[L5.] if $s\leq \,  (_{J}w)^{\{s\}}$, then $M_{st}$ commutes with $\lambda_s$ on $[e,w_0(s,t)]$.
\label{pure}
\end{enumerate} 

\end{defi}

%\begin{rem}
%\label{def-right.system}
%Let  $(J,s,t,M_{st})$ be a right system. Then Property R4 implies that, if both $s$ and $t$ are $\leq  (w^J)^{\{s,t\}}$, then $M_{st}$ coincides with $\rho'_s$, since $\rho'_s$ is the only matching of $w_0(s,t)$ sending $e$ to $s$ and commuting with both $\lambda'_s$ and $\lambda'_t$.
%
%Analogously, by Property L4, if  $(J,s,t,M_{st})$ is a left system and both $s$ and $t$ are $\leq  \, ^{\{s,t\}}(^Jw)$, then $M_{st}$ coincides with $\lambda'_s$.
%\end{rem}

%\subsection{Matching associated with systems}
%\label{3.3}
With a right system $\mathcal R=(J,s,t,M_{st})$ for $w$, we associate a map $M_{\mathcal R}$ on $[e,w]$ in the following way. 
Given $u\leq w$, 
%take first the factorization $u = u^J u_J$ and then factorize each of the two terms to obtain $$u= (u^{J})^{\{s,t\}}\, \cdot \, (u^{J})_{\{s,t\}} \,  \cdot  \, _{\{s\}} (u_{J}) \, \cdot \, ^{\{s\}}(u_{J}).$$
we set $$M_\mathcal R (u) = (u^{J})^{\{s,t\}}\, \cdot \, M_{st} \Big( (u^{J})_{\{s,t\}} \,  \cdot  \,  _{\{s\}} (u_{J}) \Big) \, 
\cdot \, ^{\{s\}}(u_{J}).$$
Symmetrically, we associate  with any left system $\mathcal L$ for $w$ a map $_\mathcal L M$ on $[e,w]$ by setting 
\[
 _{\mathcal L}M(u)=\big(M_\mathcal L(u^{-1})\big)^{-1},
\]
where $M_\mathcal L$ is the map on $[e,w^{-1}]$ associated to $\mathcal L$ as a right system for $w^{-1}$.

It is not clear at all that such $M_\mathcal R$ (or, equivalently, $_{\mathcal L}M$) defines a matching of $w$. For this we need to show that $\{u,M_\mathcal R(u)\}$ is always an edge in the Hasse diagram and that $M_\mathcal R$ is an involution.

\begin{rem}\label{3.3}Note that, if $s\in D_R(w)$, $t\in S\setminus\{s\}$,  $J= \{s\}$ and $M_{st}= \rho_s$, we obtain a right system with associated  right multiplication matching ($M = \rho_s$ on the entire interval $[e,w]$). Symmetrically,  we obtain left multiplication matchings as special cases of matchings associated with  left systems.
\end{rem}

Evidently, distinct systems for $w$ might give rise to the same maps on $[e,w]$.

In order to show that the map $M_\mathcal R$ associated with a right system is a matching we need the following elementary results. 
\begin{lem}
\label{lemma0cap}
Fix $H \subseteq S$ and $u=u^H\cdot u_H\in W$. Let  $j \in D_R(u) \setminus H$. Then  $j \in D_R(u^H)$.
\end{lem}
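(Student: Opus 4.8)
The plan is to prove Lemma~\ref{lemma0cap} directly from the factorization $u=u^H\cdot u_H$ together with the length-additivity $\ell(u)=\ell(u^H)+\ell(u_H)$ given in Proposition~\ref{fattorizzo}. The statement asserts that if $j$ is a right descent of $u$ that does \emph{not} lie in $H$, then $j$ must already be a right descent of the $W^H$-part $u^H$. Intuitively, the right descents contributed by $u_H$ all lie inside the parabolic subgroup $W_H$ (more precisely in $H$), so a descent outside $H$ cannot come from the $u_H$ factor and must be carried by $u^H$.

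First I would translate the hypothesis $j\in D_R(u)$ into the length statement $\ell(uj)<\ell(u)$, i.e. $\ell(uj)=\ell(u)-1$. The natural approach is to examine the factorization of $uj$. Since $j\notin H$, right-multiplying by $j$ interacts with the $W^H$-component rather than the $W_H$-component. Concretely, I would consider the element $u^Hj$ and argue that $\ell(u^Hj)=\ell(u^H)-1$, which is exactly the assertion $j\in D_R(u^H)$. To see this, suppose for contradiction that $\ell(u^Hj)=\ell(u^H)+1$, i.e. $j\notin D_R(u^H)$. Then $u^Hj\in W^H$ as well: right-multiplication by a generator $j\notin H$ that increases the length keeps us in $W^H$, because $W^H=\{w:D_R(w)\subseteq S\setminus H\}$ and appending a letter not in $H$ on the right, in a reduced way, cannot create a right descent lying in $H$. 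Hence $uj=(u^Hj)\cdot u_H$ would be a factorization with $u^Hj\in W^H$ and $u_H\in W_H$, and by uniqueness and length-additivity (Proposition~\ref{fattorizzo}) this forces $\ell(uj)=\ell(u^Hj)+\ell(u_H)=\ell(u^H)+1+\ell(u_H)=\ell(u)+1$, contradicting $\ell(uj)<\ell(u)$.

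To make the step ``$j\notin D_R(u^H)$ implies $u^Hj\in W^H$'' rigorous, I would invoke the standard exchange/lifting behaviour of minimal coset representatives: for $x\in W^H$ and $j\in S\setminus H$ with $\ell(xj)>\ell(x)$, one has $xj\in W^H$. This is a well-known basic property of the decomposition $W=W^H\cdot W_H$ (it follows from the fact that $D_R(xj)$ can differ from $D_R(x)$ only by possibly adding $j$, and $j\notin H$, so no descent in $H$ is introduced); alternatively it can be read off from the Subword Property (Theorem~\ref{subword}) applied to a reduced expression for $x$ followed by $j$. With this in hand the contradiction above closes the argument.

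The main obstacle is simply the careful bookkeeping of which factor absorbs the letter $j$: I must be sure that appending $j\notin H$ on the right cannot reach into the $W_H$-component, and that the uniqueness of the factorization in Proposition~\ref{fattorizzo} is being applied to a genuinely valid factorization of $uj$. Once the auxiliary fact ``$x\in W^H,\ j\in S\setminus H,\ \ell(xj)>\ell(x)\ \Rightarrow\ xj\in W^H$'' is established (or cited), the proof is a short length comparison, so I expect no serious difficulty beyond stating that auxiliary fact cleanly.
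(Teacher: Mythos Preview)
Your argument contains a genuine error. You write that, under the contradiction hypothesis $j\notin D_R(u^H)$, one would have $uj=(u^Hj)\cdot u_H$, and then invoke the uniqueness of the $W^H\cdot W_H$-factorization to get a length contradiction. But $uj$ equals $u^H\cdot u_H\cdot j$, not $u^H\cdot j\cdot u_H$: the generator $j$ is multiplied on the \emph{right} of $u$, hence on the right of $u_H$, and there is no reason for $j$ to commute past $u_H$. So the displayed product is simply not a decomposition of $uj$, and the length computation $\ell(uj)=\ell(u^Hj)+\ell(u_H)$ that you derive from it is unfounded. The intuitive slogan ``a right descent outside $H$ cannot come from the $u_H$ factor'' is correct, but it does not translate into the factorization you wrote down.

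A secondary issue: your justification of the auxiliary claim ``$x\in W^H$, $j\in S\setminus H$, $\ell(xj)>\ell(x)\Rightarrow xj\in W^H$'' via ``$D_R(xj)$ can differ from $D_R(x)$ only by possibly adding $j$'' is not correct either (already in $S_3$ one has $D_R(s_1)=\{s_1\}$ while $D_R(s_1s_2)=\{s_2\}$). The auxiliary claim happens to be true, but even granting it, the main error above still breaks the argument.

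The paper avoids factorizing $uj$ altogether. It inducts on $\ell(u_H)$: choose $h\in D_R(u_H)\subseteq H$, note $h\in D_R(u)$, and use the Lifting Property (applied to $uh\lhd u$ with $j\in D_R(u)$, $j\neq h$) to conclude $j\in D_R(uh)$. Since $uh=u^H\cdot(u_Hh)$ is again the canonical factorization with a strictly shorter $W_H$-part, the induction hypothesis yields $j\in D_R(u^H)$.
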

 \begin{proof}
We proceed by induction on $\ell(u_H)$. If $\ell(u_H)=0$, the assertion is clear. So assume $\ell(u_H)>0$, and let $h \in D_R(u_H)$.  It follows immediately that $h\in D_R(u)$ and from 
the ``Lifting Property'' (see, e.g., \cite[Proposition 2.2.7]{BB} or \cite[Proposition 5.9]{Hum}), we also have $j \in D_R(uh)$. Since $uh=u^H \cdot (uh)$, with $uh\in W_H$, by induction hypothesis  $j \in D_R(u^H)$.
\end{proof}

\begin{lem}
\label{lemma00cap}
Fix $u\in W$ and $t,j \in S$, with $t\leq u$ and $j \in D_R(u)$.  Assume that there exists a reduced expression  $X$ for $u$ such that $t \textrm{-}j$ is not a subword of $X$. Then $t$ and $j$ commute.
\end{lem}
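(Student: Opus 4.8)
The plan is to prove the statement by induction on $\ell(u)$, and I may assume $t\neq j$ (otherwise $t$ and $j$ trivially commute). Write $X=s_1\textrm{-}\cdots\textrm{-}s_q$. The assumption that $t\textrm{-}j$ is not a subword of $X$ means exactly that in $X$ every letter equal to $t$ occurs to the right of every letter equal to $j$. By the Subword Property (Theorem~\ref{subword}) the hypothesis $t\leq u$ forces the letter $t$ to occur in $X$. I would begin by inspecting the last letter $s_q\in D_R(u)$. It cannot equal $j$: were $s_q=j$, then, every $t$ occurring to the right of every $j$, no letter of $X$ could equal $t$, contradicting $t\le u$. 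Hence $s_q\neq j$, and I distinguish the cases $s_q=t$ and $s_q\notin\{t,j\}$.

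Suppose first $s_q=t$. Then $t,j\in D_R(u)$, so the parabolic factor $u_{\{t,j\}}$ of Proposition~\ref{fattorizzo} satisfies $D_R(u_{\{t,j\}})=D_R(u)\cap\{t,j\}=\{t,j\}$; the only element of the dihedral group $W_{\{t,j\}}$ with this property is its longest element, whence $u_{\{t,j\}}=w_0(t,j)$ and $w_0(t,j)\leq u$. If $m_{t,j}\geq 3$, then each of the two reduced expressions of $w_0(t,j)$, namely $t\textrm{-}j\textrm{-}t\textrm{-}\cdots$ and $j\textrm{-}t\textrm{-}j\textrm{-}\cdots$, contains $t\textrm{-}j$ as a subword; by the Subword Property $X$ would then contain $t\textrm{-}j$, a contradiction. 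Thus $m_{t,j}=2$ and $t,j$ commute in this case.

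Now suppose $s_q=r\notin\{t,j\}$ and put $u'=us_q$, so that $\ell(u')=\ell(u)-1$ and $X'=s_1\textrm{-}\cdots\textrm{-}s_{q-1}$ is a reduced expression of $u'$. I claim $u'$ again satisfies the hypotheses of the lemma with the same pair $t,j$; the inductive hypothesis will then give the result. Clearly $X'$ avoids $t\textrm{-}j$ (being an initial segment of $X$), and $t\leq u'$ because $t$ occurs in $X$ at a position $<q$ (as $s_q=r\neq t$). The delicate point is to keep $j\in D_R(u')$. Here I would use that $j,r\in D_R(u)$, so, as in the previous case, $u_{\{j,r\}}=w_0(j,r)$ is the longest element of the necessarily finite dihedral group $W_{\{j,r\}}$; since $r\in D_R(w_0(j,r))$, the factorization of $u'$ with respect to $\{j,r\}$ is $u'=u^{\{j,r\}}\cdot\big(w_0(j,r)\,r\big)$, and deleting the final $r$ from a reduced expression $\cdots\textrm{-}j\textrm{-}r$ of $w_0(j,r)$ produces a reduced expression $\cdots\textrm{-}j$ of $w_0(j,r)\,r$ ending in $j$. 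Hence $j\in D_R\big(w_0(j,r)\,r\big)=D_R(u')\cap\{j,r\}$, so $j\in D_R(u')$, as required.

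The main obstacle is precisely this last point, the persistence of $j$ as a right descent after deleting the final letter of $X$; it is overcome by passing to the rank-two parabolic $W_{\{j,r\}}$ and using the elementary dihedral fact that the longest element, with its last letter removed, still admits $j$ as a right descent. The case $s_q=t$ is the base-type case and is handled by recognizing the longest element $w_0(t,j)$ inside $u$ and invoking the Subword Property; all the remaining verifications are routine manipulations of reduced expressions together with the factorization of Proposition~\ref{fattorizzo}.
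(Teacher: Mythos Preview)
Your proof is correct, but it is considerably more involved than the paper's. The paper gives a two-line direct argument: since $j\in D_R(u)$, there is a reduced expression $X'$ for $u$ ending in $j$; as $t\leq u$ (and $t\neq j$), the letter $t$ occurs in $X'$ to the left of the final $j$, so $tj\leq u$ by the Subword Property. Now apply the Subword Property to $tj$ and the given expression $X$: some reduced word for $tj$ must occur as a subword of $X$, and since $t\textrm{-}j$ is excluded by hypothesis, that reduced word must be $j\textrm{-}t$, forcing $tj=jt$.

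Your inductive approach peels off the last letter of $X$ and uses the rank-two parabolic factorization to preserve the right-descent $j$ through the induction. This is sound (and your treatment of the delicate point, that $j$ remains a descent after removing the final $r$, is handled correctly via $u_{\{j,r\}}=w_0(j,r)$), but it reproves machinery that the Subword Property already encodes. The paper's argument avoids the induction entirely by working with the single element $tj$ of length~$2$.
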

 \begin{proof}
Since $j \in D_R(u)$, there exists a reduced expression $X'$ for $u$ ending with the letter $j$ and hence having $t \textrm{-}j$ as a  subword. By the Subword Property, $tj \leq u$ but, since $t \textrm{-}j$ is not a subword of $X$, $tj$ must be equal to $jt$. 
\end{proof}

\begin{lem}
\label{lemma2cap}
Let $u\in W$, $s,t \in S$ with $m_{s,t}\geq 3$, $s\nleq u^{\{s,t\}}$ and $st\leq u_{\{s,t\}}$. Let $j\in  D_{R}(u)\setminus \{s,t\}$. 
Then  $j$ commutes with $s$ and $t$.
\end{lem}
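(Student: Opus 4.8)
The plan is to reduce everything to the reduced-word Lemmas \ref{lemma0cap} and \ref{lemma00cap} together with one application of the strong exchange property. First I would apply Lemma \ref{lemma0cap} with $H=\{s,t\}$: since $j\in D_R(u)\setminus\{s,t\}$, it gives $j\in D_R(u^{\{s,t\}})$. I fix the factorization $u=u^{\{s,t\}}\cdot u_{\{s,t\}}$ of Proposition \ref{fattorizzo} and build a reduced word $y_1\cdots y_k=C\cdot D$ for $u$, where $C$ is a reduced word for $u^{\{s,t\}}$ and $D$ one for $u_{\{s,t\}}$. Since $s\nleq u^{\{s,t\}}$, the letter $s$ does not occur in $C$; since $u_{\{s,t\}}\in W_{\{s,t\}}$, the word $D$ is alternating in $s,t$; and I may take $C$ to end in $j$. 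The commutation $sj=js$ is then immediate: in $C\cdot D$ every $s$ lies in $D$ and every $j$ lies in $C$, so no $s$ precedes a $j$, i.e. $s\textrm{-}j$ is not a subword of $C\cdot D$; as $s\le u_{\{s,t\}}\le u$, Lemma \ref{lemma00cap} gives $sj=js$. (Here $j\in D_R(u)$ is not even needed.)

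The commutation $jt=tj$ is the real content, and the naive route (finding a reduced word of $u$ without the subword $t\textrm{-}j$) is circular, since $t$ may occur in $u^{\{s,t\}}$. Instead I would argue by contradiction, assuming $m_{t,j}\geq 3$, and use that $j\in D_R(u)$ crucially. By the strong exchange property there is an index $i$ with $y_i=\sigma_i\,j\,\sigma_i^{-1}$, where $\sigma_i:=y_{i+1}\cdots y_k$ (this encodes that deleting $y_i$ yields a reduced word for $uj$). I then classify the possibilities. If $y_i$ lies in $D$, then $\sigma_i\in W_{\{s,t\}}$, so $j=\sigma_i^{-1}y_i\sigma_i\in W_{\{s,t\}}$, impossible since $j\notin\{s,t\}$. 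If $y_i$ lies in $C$, write $\sigma_i=P\cdot u_{\{s,t\}}$ with $P$ a suffix of $C$ (hence $s$-free); the relation $y_i=P\,u_{\{s,t\}}\,j\,u_{\{s,t\}}^{-1}P^{-1}$ rearranges to $P^{-1}y_iP=u_{\{s,t\}}\,j\,u_{\{s,t\}}^{-1}$. The left-hand side lies in the parabolic $W_{S\setminus\{s\}}$ (a conjugate of the $s$-free generator $y_i$ by the $s$-free element $P$), so its support avoids $s$; if I can show the right-hand side has $s$ in its support, no such $i$ exists, contradicting $j\in D_R(u)$ and forcing $m_{t,j}=2$.

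Thus the key technical point — and the step I expect to be the main obstacle — is the support fact: \emph{under $st\le u_{\{s,t\}}$, $m_{s,j}=2$, $m_{t,j}\geq 3$, the reflection $\rho:=u_{\{s,t\}}\,j\,u_{\{s,t\}}^{-1}$ has $s$ in its support.} I would prove this in the geometric representation (cf. \cite{BB}): $\rho$ is the reflection in the positive root $\beta:=u_{\{s,t\}}(\alpha_j)$ (positive because $u_{\{s,t\}}\in W_{\{s,t\}}$ inverts only roots of the $\{s,t\}$-subsystem). Since $m_{s,j}=2$ gives $s(\alpha_j)=\alpha_j$, one has $s\in\operatorname{supp}(\rho)$ iff the $\alpha_s$-coefficient of $\beta$ is nonzero. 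If it were zero, $\beta$ would lie in the plane $\langle\alpha_j,\alpha_t\rangle$, and the $W$-invariance of the bilinear form ($\langle\beta,\beta\rangle=\langle\alpha_j,\alpha_j\rangle=1$) would pin $\beta$ down to one of the two unit roots $\alpha_j$ or $t(\alpha_j)$ of that plane, i.e. $\rho\in\{j,\,tjt\}$. But $\rho=j$ forces $u_{\{s,t\}}$ to stabilize $\alpha_j$, and $\rho=tjt$ forces $tu_{\{s,t\}}$ to stabilize $\alpha_j$; as the stabilizer of $\alpha_j$ in $W_{\{s,t\}}$ is $\{e,s\}$ (the only dihedral reflection fixing $\alpha_j$ is $s$), this gives $u_{\{s,t\}}\in\{e,s,t,ts\}$, all excluded by $st\le u_{\{s,t\}}$. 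Hence the coefficient is nonzero and $s\in\operatorname{supp}(\rho)$, completing the proof.

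As an alternative to the root computation (should one prefer to stay purely combinatorial), I would instead run an induction on $\ell(u_{\{s,t\}})$: peeling a right descent $r\in D_R(u_{\{s,t\}})$ from $u$ keeps all hypotheses for $ur$ (one checks $j\in D_R(ur)$ using that $r,j\in D_R(u)$, and that $st\le u_{\{s,t\}}r$ except when $u_{\{s,t\}}\in\{st,tst\}$), reducing to those two base cases; there the strong-exchange bookkeeping above is the same but the support fact degenerates to verifying directly that $stjts$ (resp.\ $tstjtst$) contains $s$, which is immediate.
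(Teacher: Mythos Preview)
Your argument is correct but takes a considerably longer route for $jt=tj$ than the paper does. The paper handles both commutations simultaneously with a single Subword-Property trick: since $st\leq u$ (with $m_{s,t}\geq 3$) and there is a reduced expression $X$ of $u$ ending in $j$, the word $s\textrm{-}t\textrm{-}j$ is a subword of $X$, so $stj\leq u$; one then lists $\textrm{Red}(stj)$, which is $\{s\textrm{-}t\textrm{-}j\}$ if $jt\neq tj$, adds $s\textrm{-}j\textrm{-}t$ if $jt=tj$, and adds $j\textrm{-}s\textrm{-}t$ only when both commutations hold. In the reduced expression $Y=C\cdot D$ (your notation) every $s$ lies in $D$ and every $j$ in $C$, so no $s$ precedes a $j$ and neither $s\textrm{-}t\textrm{-}j$ nor $s\textrm{-}j\textrm{-}t$ is a subword of $Y$; by the Subword Property $stj$ must then have $j\textrm{-}s\textrm{-}t$ as a reduced expression, forcing $sj=js$ and $jt=tj$ at once. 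Your proof recovers $sj=js$ by essentially the same subword observation, but then attacks $jt=tj$ via strong exchange and a root-system support computation in the geometric representation. That detour is sound --- the $\alpha_j$-coefficient of $\beta=u_{\{s,t\}}(\alpha_j)$ stays equal to $1$ under the $W_{\{s,t\}}$-action, so the unit-length condition on $\beta\in\langle\alpha_j,\alpha_t\rangle$ really does pin $\beta$ to $\alpha_j$ or $t(\alpha_j)$ --- but it is heavier machinery than the paper needs. One small slip: your parenthetical that $j\in D_R(u)$ is ``not even needed'' for $sj=js$ is not right, since it is precisely what puts a $j$ after every $s$ in $X$ and hence gives $sj\leq u$; as your actual argument does invoke it through Lemma~\ref{lemma00cap}, this does not affect correctness.
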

\begin{proof}
%In this proof, we use the Subword Property (Theorem \ref{subword}) several times, without explicit mention.
 Consider two reduced expressions $X$ and $Y$  for $u$ with the following properties: $X$ ends with the letter $j$  (such a reduced expression exists since $j\in  D_{R}(u)$) and $Y$ is the concatenation of a reduced expression for $u^{\{s,t\}}$ and a reduced expression for $u_{\{s,t\}}$.

Since $st \leq u$ and $st\neq ts$, the expression $X$ has $s \textrm{-}t$ as a subword, and then also $s \textrm{-}t \textrm{-}j$; this implies $x:=stj \leq  u$. Now if we let Red$(x)$ be the set of all reduced expressions for $x$ we have
\[
 \textrm{Red}(x)=\begin{cases} \{s\textrm{-}t\textrm{-}j\} & \textrm{if }jt\neq tj,\\ \{s\textrm{-}t\textrm{-}j,s\textrm{-}j\textrm{-}t\} & \textrm{if }jt=tj \textrm{ and }sj\neq js,\\ \{s\textrm{-}t\textrm{-}j,s\textrm{-}j\textrm{-}t,j\textrm{-}s\textrm{-}t\} & \textrm{if }jt=tj \textrm{ and }sj=js.
 \end{cases}
\]
Since $s\nleq u^{\{s,t\}}$, we have that $s\textrm{-}t\textrm{-}j$ and $s\textrm{-}j\textrm{-}t$ can not be subexpressions of $Y$ and therefore we conclude that $jt=tj$ and $sj=js$.
%consider the possible reduced expressions for the element $x$. If $t$ does not commute with $j$ we have that $stj$ is the only such reduced expressionNow we observe that the expression $Y$ does not have $stj$ and $sjt$ as subwords. can have $jst$, $jts$ and $tjs$  as subwords but not $stj$ since $s \not \leq u^{\{s,t\}}$. Hence, $stj \in \{jst, jts, tjs\}$. Since $s$ and $t$ do not commute, $stj \neq tjs$;  then we are done since $stj \in \{jst, jts\}$ implies that $j$ commutes with $s$ and $t$.
\end{proof}

\begin{pro}
\label{definiscematching}
Let $\mathcal R=(J,s,t,M_{st})$ be a right system for $w$ and $u\leq w$.  
 %Set $$M(u) = (u^{J})^{\{s,t\}}\, \cdot \, M_{st} \Big( (u^{J})_{\{s,t\}} \,  \cdot  \,  _{\{s\}} (u_{J}) \Big) \, \cdot \, ^{\{s\}}(u_{J}).$$
Then 
\begin{itemize}
\item  $(M_\mathcal R(u)^{J})^{\{s,t\}}=  (u^{J})^{\{s,t\}}$,
\item  $(M_\mathcal R(u)^{J})_{\{s,t\}} \,  \cdot  \,  _{\{s\}} (M_\mathcal R(u)_{J}) = M_{st} \Big( (u^{J})_{\{s,t\}} \,  \cdot  \,  _{\{s\}} (u_{J}) \Big) $,
\item $  ^{\{s\}}(M_\mathcal R(u)_{J}) = \, ^{\{s\}}(u_{J})$.
\end{itemize}
\end{pro}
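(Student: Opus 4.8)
The plan is to collapse all three bullets to a single membership statement, and then attack that with the three preceding lemmas. First I fix notation: by Proposition \ref{fattorizzo} write the length-additive iterated factorization
$$u = \underbrace{(u^{J})^{\{s,t\}}}_{A}\cdot \underbrace{(u^{J})_{\{s,t\}}}_{B}\cdot \underbrace{{}_{\{s\}}(u_{J})}_{C}\cdot \underbrace{{}^{\{s\}}(u_{J})}_{D},$$
and set $P=M_{st}(BC)$, which is defined because $BC\in W_{\{s,t\}}$ and $BC\le u\le w$ force $BC\le w_0(s,t)$ (Proposition \ref{unicomax}). Thus $M_{\mathcal R}(u)=A\cdot P\cdot D=:v$. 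I record two cheap facts: $s\notin D_R(B)$ (otherwise $s\in D_R(u^J)$, contradicting $u^J\in W^J$), so $B$ is $e$ or ends in $t$; and the three bullets are \emph{not} independent. Indeed, once the first and third hold, substituting them into the genuine factorization $v=(v^{J})^{\{s,t\}}(v^{J})_{\{s,t\}}\,{}_{\{s\}}(v_{J})\,{}^{\{s\}}(v_{J})=A\,(v^{J})_{\{s,t\}}\,{}_{\{s\}}(v_{J})\,D$ and comparing with $v=APD$, left-cancellation of $A$ and right-cancellation of $D$ yield the middle bullet. So it suffices to compute $v^{J}$ and $v_{J}$.

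Next I reduce to one membership. Factor $P=P^{\{s\}}\cdot P_{\{s\}}$ as a right coset modulo $W_{\{s\}}$, so $P_{\{s\}}\in\{e,s\}$ and $s\notin D_R(P^{\{s\}})$, and consider the candidate $v=(A\,P^{\{s\}})\cdot(P_{\{s\}}\,D)$. Since $P_{\{s\}}D\in W_J$ trivially, and the factorization $W=W^J\cdot W_J$ is unique and length-additive (Proposition \ref{fattorizzo}), everything follows from the single claim
$$A\,P^{\{s\}}\in W^{J}.$$
Granting it, uniqueness forces $v^{J}=A\,P^{\{s\}}$ and $v_{J}=P_{\{s\}}D$; then $A\in W^{\{s,t\}}$ and $P^{\{s\}}\in W_{\{s,t\}}$ give $(v^J)^{\{s,t\}}=A$ (first bullet), while $P_{\{s\}}\in W_{\{s\}}$ and $D\in{}^{\{s\}}W$ give ${}^{\{s\}}(v_J)=D$ (third bullet), each by uniqueness of the relevant parabolic factorization. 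Hence the whole proposition collapses to $A\,P^{\{s\}}\in W^{J}$.

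To prove $A\,P^{\{s\}}\in W^J$ I would examine $D_R(A\,P^{\{s\}})$. Its trace on $\{s,t\}$ is $D_R(P^{\{s\}})\subseteq\{t\}$ (as $A\in W^{\{s,t\}}$), which misses $J$ because $t\notin J$. For $j\in D_R(A\,P^{\{s\}})\setminus\{s,t\}$, Lemma \ref{lemma0cap} with $H=\{s,t\}$ (and $(A\,P^{\{s\}})^{\{s,t\}}=A$) yields $j\in D_R(A)$, and I must exclude $j\in J$. The comparison object is $u^{J}=AB\in W^{J}$, which already gives $D_R(AB)\cap J=\emptyset$. If $P^{\{s\}}\in\{e,t\}$ the claim is immediate: since $M_{st}$ is an involution with $M_{st}(e)=s$ and $M_{st}(t)=ts$, tracing $BC=M_{st}(P)$ shows $BC\in\{e,s\}$ or $BC\in\{t,ts\}$ accordingly, whence $B=P^{\{s\}}$ and $A\,P^{\{s\}}=AB=u^{J}$. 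Otherwise $st\le P^{\{s\}}$, and I would apply Lemma \ref{lemma2cap} to $x=A\,P^{\{s\}}$: its outside right descents then commute with both $s$ and $t$, hence with all of $B\in W_{\{s,t\}}$, so such a $j\in D_R(A)\cap J$ would force $j\in D_R(AB)\cap J$, contradicting $u^J\in W^J$.

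The main obstacle is exactly this membership, and inside it the fact that $P^{\{s\}}$ (arising from $M_{st}(BC)$) and the original dihedral factor $B$ are a priori different words ending in $t$: whether $j$ survives as a right descent after appending such a word depends delicately on its length and on $m_{j,s},m_{j,t}$. The two mechanisms that tame this are the boundary identity $P^{\{s\}}\in\{e,t\}\Rightarrow P^{\{s\}}=B$ (forced by $M_{st}(e)=s$, $M_{st}(t)=ts$), which clears the short cases, and Lemma \ref{lemma2cap}, which in the range $st\le P^{\{s\}}$ transfers the descent obstruction from $A\,P^{\{s\}}$ back to $AB=u^J$, where it is known to vanish. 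The one genuinely fiddly point is the sub-case $s\le A$, where Lemma \ref{lemma2cap} does not apply verbatim; I expect to settle it by a direct reduced-word argument, using Lemma \ref{lemma00cap} to convert non-commutation of $j$ with $t$ into a forced $t\textrm{-}j$ subword and thereby locate the descent.
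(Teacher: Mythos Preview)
Your reduction to the single membership $AP^{\{s\}}\in W^{J}$ is correct and is exactly the paper's claim (\ref{capraia}) in equivalent form; your handling of the short case $P^{\{s\}}\in\{e,t\}$ (forcing $P^{\{s\}}=B$, hence $AP^{\{s\}}=u^{J}$) and your use of Lemma~\ref{lemma2cap} when $s\not\le A$ match the paper's argument. Structurally, your proof is the paper's proof.

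The gap is precisely the sub-case $s\le A$ that you flag, and your sketch there misses two necessary ingredients. First, Lemma~\ref{lemma00cap} (your intended tool for $jt=tj$) needs a reduced expression of $AP^{\{s\}}$ with no subword $t\textrm{-}j$; this is available only when $t\not\le A$, so you must first dispose of the case $s,t\le A$. The paper does so via Lemma~\ref{stleq}: if $s,t\le A\le (w^{J})^{\{s,t\}}$ then $M_{st}\equiv\rho_{s}$ on $[e,w_{0}(s,t)]$, whence $P=BCs$, $P^{\{s\}}=B$, and $AP^{\{s\}}=u^{J}$ trivially. You never invoke Lemma~\ref{stleq}. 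Second, even after Lemma~\ref{lemma00cap} yields $jt=tj$, you still need $js=sj$ to push $j$ through $B\in W_{\{s,t\}}$ and obtain $j\in D_{R}(AB)=D_{R}(u^{J})$; Lemma~\ref{lemma00cap} cannot give this here, since with $s\le A$ an $s\textrm{-}j$ subword may well occur inside a reduced word for $A$. The missing input is Property~R3 of the right system: from $j\in J$ and $j\le A\le w^{J}$ one gets $js=sj$. The paper invokes R3 at exactly this point, whereas your proposal never uses R3 at all. With Lemma~\ref{stleq} and R3 added, your ``fiddly'' sub-case closes in one line, and the two proofs coincide.
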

 \begin{proof}
 For notational convenience, we let  $M=M_\mathcal R$ and for all $u\leq w$ we let ${\bar u}=(u^{J})_{\{s,t\}} \,  \cdot  \,  _{\{s\}} (u_{J})\leq w_0(s,t)$.
%(By Property R2, the second assertion is implied by the first, which follows)
%We have to show that $\{u, M(u)\}$ is an edge in the Hasse diagram of $W$ and that $M(M(u))=u$. 
All the assertions follow at once from the following claim:
\begin{eqnarray}
\label{capraia}
 (u^{J})^{\{s,t\}}\, \cdot \, M_{st} ( {\bar u} ) & \in & W^J \cup ( W^J \cdot s ).
\end{eqnarray}
In fact, assume that $(u^{J})^{\{s,t\}}\, \cdot \, M_{st} ( {\bar u} )\varepsilon \in W^J$, with $\varepsilon\in \{e,s\}$. Then $M(u)^J=(u^{J})^{\{s,t\}}\, \cdot \, M_{st} ( {\bar u} )\varepsilon$ and $M(u)_J=\varepsilon\cdot \, ^{\{s\}}(u_{J})$. From this parabolic decomposition it follows that $(M(u)^J)^{\{s,t\}}= (u^{J})^{\{s,t\}}$, $(M(u)^J)_{\{s,t\}}=M_{st} ( {\bar u} )\varepsilon$, $ _{\{s\}} (M(u)_{J})=\varepsilon$ and  $^{\{s\}}(M(u)_{J}) = \, ^{\{s\}}(u_{J})$, and the three assertions are proved.

Let us now prove (\ref{capraia}).

 By definition, $ (u^{J})^{\{s,t\}}\, \cdot \,  {\bar u} =u^J \cdot  \,  _{\{s\}} (u_{J})\in W^J \cup ( W^J \cdot s )$, hence  (\ref{capraia}) holds when $ M_{st} ( {\bar u}) =  \rho_s ( {\bar u}) $. This happens if
 ${\bar u}  \in \{e,s,t,ts\}$, by Property R1, and if  both $s$ and $t$ are  $\leq (u^J)^{\{s,t\}}$, by Lemma \ref{stleq} (recall that $s,t\leq (u^J)^{\{s,t\}}$ implies  $s,t\leq (w^J)^{\{s,t\}}$, by Proposition \ref{mantiene}).

In the remaining cases, we will prove  (\ref{capraia}) by showing that, if $j \in J\setminus \{s\}$ is a right descent of $ (u^{J})^{\{s,t\}}\, \cdot \,M_{st} ( {\bar u})\varepsilon$, for some $\varepsilon \in \{e,s\}$ then   
\begin{enumerate}
\item $j\in D_R((u^J)^{\{s,t\}}) $,
\item $j$ commutes with $s$ and $t$,
\end{enumerate}
which is in contradiction with $ (u^{J})^{\{s,t\}}\, \cdot \, {\bar u}  \in W^J \cup ( W^J \cdot s )$.

So assume that  $j \in J\setminus \{s\}$ is a right descent of $ (u^{J})^{\{s,t\}}\, \cdot \, M_{st} ( {\bar u})\varepsilon$. Lemma \ref{lemma0cap}, with $H=\{s,t\}$, implies that (1) holds, and by Property R3 we have that $j$ commutes with $s$. We need to show that $j$ commutes with $t$.

If $t \not \leq (u^J)^{\{s,t\}}$, we may conclude using Lemma \ref{lemma00cap} applied to $ (u^{J})^{\{s,t\}}\, \cdot \,M_{st} ( {\bar u})\varepsilon$. If %$t  \leq (u^J)^{\{s,t\}}$ and 
$s \not \leq (u^J)^{\{s,t\}}$, we may conclude using Lemma \ref{lemma2cap} (since $\bar u \notin \{e,s,t,ts\}$, $s$ does not commute with $t$).
\end{proof}
\begin{cor}\label{definiscematching2}
 Let $\mathcal R=(J,s,t,M_{st})$ be a right system for $w$. Then $M_\mathcal R$ is a matching of $[e,w]$. Moreover, for all $u\leq w$, we have $u\lhd M_\mathcal R(u)$ if and only if $\bar u\lhd M_{st}(\bar u)$, where $\bar u=(u^{J})_{\{s,t\}} \,  \cdot  \,  _{\{s\}} (u_{J})$.
\end{cor}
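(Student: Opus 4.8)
The plan is to verify the two defining requirements of a matching—that $M_\mathcal{R}$ is an involution and that $\{u,M_\mathcal{R}(u)\}$ is an edge of the Hasse diagram of $[e,w]$—using Proposition \ref{definiscematching} as the main engine, together with the length-additivity of the refined parabolic factorization $u=(u^J)^{\{s,t\}}\cdot (u^J)_{\{s,t\}}\cdot {}_{\{s\}}(u_J)\cdot {}^{\{s\}}(u_J)$ recorded just before Theorem \ref{ma16}. I first note that Property R2 guarantees $M_\mathcal{R}(u)\le w$, so $M_\mathcal{R}$ really maps $[e,w]$ into itself and iterating it is legitimate; everything then reduces to transporting information about $M_{st}$ on $[e,w_0(s,t)]$ through the three component identities of Proposition \ref{definiscematching}.

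For the involution, I would set $v=M_\mathcal{R}(u)$ and read off from Proposition \ref{definiscematching} that $(v^J)^{\{s,t\}}=(u^J)^{\{s,t\}}$, that $\bar v=M_{st}(\bar u)$, and that ${}^{\{s\}}(v_J)={}^{\{s\}}(u_J)$. Substituting these into the definition $M_\mathcal{R}(v)=(v^J)^{\{s,t\}}\cdot M_{st}(\bar v)\cdot {}^{\{s\}}(v_J)$ and using that $M_{st}$ is an involution (so $M_{st}(\bar v)=M_{st}(M_{st}(\bar u))=\bar u$) collapses the right-hand side to $(u^J)^{\{s,t\}}\cdot \bar u\cdot {}^{\{s\}}(u_J)$, which is exactly the refined factorization of $u$. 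Hence $M_\mathcal{R}(M_\mathcal{R}(u))=u$.

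For the edge condition, the key observation is the length identity $\ell(M_\mathcal{R}(u))-\ell(u)=\ell(M_{st}(\bar u))-\ell(\bar u)$. To obtain it I would use that every factor of a reduced word is itself reduced, so each consecutive subproduct of a length-additive (refined) parabolic factorization is again length-additive; applying this to the factorization of $M_\mathcal{R}(u)$ given by Proposition \ref{definiscematching} yields $\ell(M_\mathcal{R}(u))=\ell((u^J)^{\{s,t\}})+\ell(M_{st}(\bar u))+\ell({}^{\{s\}}(u_J))$, and likewise $\ell(u)=\ell((u^J)^{\{s,t\}})+\ell(\bar u)+\ell({}^{\{s\}}(u_J))$. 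Subtracting gives the identity. Since $M_{st}$ is a matching of $w_0(s,t)$, the pair $\{\bar u,M_{st}(\bar u)\}$ is a Hasse edge there, so the right-hand side is $\pm 1$, and its sign is governed precisely by whether $\bar u\lhd M_{st}(\bar u)$ or $M_{st}(\bar u)\lhd \bar u$.

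Finally, $|\ell(M_\mathcal{R}(u))-\ell(u)|=1$ is not yet enough, and I expect the Bruhat comparability of $u$ and $M_\mathcal{R}(u)$ to be the only genuinely non-bookkeeping step. Here I would exploit that $u$ and $M_\mathcal{R}(u)$ differ only in their middle factor ($\bar u$ versus $M_{st}(\bar u)$), the outer factors $(u^J)^{\{s,t\}}$ and ${}^{\{s\}}(u_J)$ being identical, and that both products are length-additive. Assuming, say, $\bar u\lhd M_{st}(\bar u)$, a reduced expression for $M_{st}(\bar u)$ contains one for $\bar u$ as a subword; flanking both by fixed reduced words for the common outer factors produces a reduced expression for $M_\mathcal{R}(u)$ containing one for $u$, whence the Subword Property (Theorem \ref{subword}) gives $u\le M_\mathcal{R}(u)$. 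Combined with $\ell(M_\mathcal{R}(u))=\ell(u)+1$ this yields $u\lhd M_\mathcal{R}(u)$, and the opposite case is symmetric. This simultaneously shows that $\{u,M_\mathcal{R}(u)\}$ is a Hasse edge and that $u\lhd M_\mathcal{R}(u)$ exactly when $\bar u\lhd M_{st}(\bar u)$, completing the proof.
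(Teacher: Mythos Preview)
Your proposal is correct and follows essentially the same approach as the paper's proof: both deduce the involution property directly from Proposition~\ref{definiscematching}, and both obtain the Hasse-edge property by observing that $M_\mathcal{R}(u)$ and $u$ share the outer factors $(u^J)^{\{s,t\}}$ and ${}^{\{s\}}(u_J)$ while their middle factors $M_{st}(\bar u)$ and $\bar u$ differ by a single letter, so that a reduced expression for one of $u$, $M_\mathcal{R}(u)$ is obtained from the other by inserting a letter. Your write-up is simply more explicit than the paper's, spelling out the length identity $\ell(M_\mathcal{R}(u))-\ell(u)=\ell(M_{st}(\bar u))-\ell(\bar u)$ and invoking the Subword Property by name, whereas the paper compresses these steps into a single sentence.
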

\begin{proof}
Proposition \ref{definiscematching} immediately implies that $M_\mathcal R$ is an involution. To show that $\{u,M_\mathcal R(u)\}$ is an edge in the Hasse diagram of $[e,w]$ with no lack of generality we can assume that $\ell(u)<\ell(M_\mathcal R(u))$. This can happen only if $\bar u\lhd M_{st}(\bar u)$ and so we have that $M_{\mathcal R}(u)$ has an expression obtained by adding one letter to a reduced expression of $u$. As $\ell(u)<\ell(M_\mathcal R(u))$ this forces such an expression to be reduced and $u\lhd M_\mathcal R(u)$. The last assertion is an immediate consequence.
\end{proof}

In the rest of the paper, we will often use Corollary \ref{definiscematching2} without explicit mention.

%\subsection{Algebraic properties of special matchings}
We conclude this section with the crucial observation that any special matching of $w$ is the map associated to a right or left system. 
\begin{rem}
\label{oss}
Given a matching $M$ of $w$, we can define a matching $ \tilde{M} $ of $ w^{-1} $ by setting $ \tilde{M}(x)=  (M(x^{-1}))^{-1} $,
for all $ x\leq w^{-1} $. It satisfies the following properties:
\begin{enumerate}
\item $M$ is special if and only if  $ \tilde{M}$ is special;
\item  $M$ is associated with a right system if and only if $ \tilde{M} $ is associated with a left system;
\item $M(y)=ys$ if and only if  $\tilde{M}(y^{-1})=sy^{-1} $, $s\in S$;
\item  $\tilde{\tilde{M}} = M$.
\end{enumerate}
\end{rem}
%In the following theorem, we provide  the algebraic characterization of the special matchings of lower Bruhat intervals.
%\begin{thm}
%\label{caratteri}
%Let $w$ be any element of any arbitrary Coxeter group $ W$.
%The special matchings of $w$ are exactly the matchings associated with a right or a left system of $w$.  
%\end{thm}

\begin{thm}
\label{caratteri}
Let $w$ be any element of any arbitrary Coxeter group $ W$ and let $M$ be a special matching of $w$. Then $M$ is  associated with a right or a left system of $w$.  
\end{thm}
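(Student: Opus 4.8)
The plan is to reduce to the case $w\ge r$ for every $r\in S$ and then to read off the required system directly from the structural dichotomy of Theorem \ref{ma16}. First I would replace $(W,S)$ by the parabolic system $(W_{S'},S')$ with $S'=S\cap[e,w]$: a generator not below $w$ occurs in no reduced expression of any $u\le w$, so the poset $[e,w]$ and the notion of special matching are unchanged, while now $w\ge r$ for all $r\in S'$, which is exactly the hypothesis needed to invoke Propositions \ref{734} and \ref{ma16}. Set $s=M(e)$ and $J=\{r\in S:\,M(r)=sr\}$ (so $s\in J$, since $M(s)=e=s\cdot s$). By Theorem \ref{ma16} exactly one of two situations occurs: either there is a unique $t\in S$ for which $M$ is not a multiplication matching on $[e,w_0(s,t)]$, or $M$ is a multiplication matching on every $[e,w_0(s,x)]$. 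Recall also that for the generator $t$ one always has $M(t)\in\{st,ts\}$.

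Suppose first we are in the non-multiplication case with $M(t)=ts$. I would take the quadruple $\mathcal{R}=(J,s,t,M|_{[e,w_0(s,t)]})$ and verify R1--R5. Property R1 is immediate; Theorem \ref{ma16}(i) gives the formula for $M(u)$, which is precisely $M_{\mathcal{R}}(u)$, so in particular $M_{\mathcal{R}}(u)=M(u)\le w$, i.e.\ R2 holds. Property R3 is Proposition \ref{734}(1) applied with $u=w$, and R4, R5 are exactly the two conclusions of Proposition \ref{R4-R5} (whose hypotheses $M(e)=s$, $M(t)=ts$ are satisfied). Hence $M=M_{\mathcal{R}}$ is associated with a right system.

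The two remaining possibilities I would dispatch as follows. If $M(t)=st$, pass to $\tilde M$ of Remark \ref{oss}: a direct computation gives $\tilde M(e)=s$ and $\tilde M(t)=(M(t))^{-1}=ts$, and since inversion is a Bruhat-order isomorphism sending $w_0(s,t)$ to its inverse and interchanging $\rho_a\leftrightarrow\lambda_a$, the matching $\tilde M$ is again not a multiplication matching precisely on the $\{s,t\}$-dihedral of $w^{-1}$. Thus $\tilde M$ falls under the previous paragraph and is associated with a right system for $w^{-1}$, that is, a left system $\mathcal{L}$ for $w$; since ${}_{\mathcal{L}}M=\tilde{\tilde M}=M$, the matching $M$ is associated with that left system. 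In the multiplication case, Theorem \ref{ma16}(ii) gives $M(u)=u^J s\,u_J$. If $S\setminus J=\emptyset$ then $W^J=\{e\}$ forces $M=\lambda_s$, a left multiplication matching, which is associated with a left system by the symmetric form of Remark \ref{3.3}. Otherwise I would choose any $t\in S\setminus J$; then necessarily $m_{s,t}\ge3$ and $M=\rho_s$ on $[e,w_0(s,t)]$, so I take $\mathcal{R}=(J,s,t,\rho_s)$. Here R3 is again Proposition \ref{734}(1), while R4 and R5 hold automatically because left and right multiplications commute; the real content is the identity $M_{\mathcal{R}}(u)=u^J s\,u_J$, which I would check by a short parabolic computation distinguishing the cases ${}_{\{s\}}(u_J)\in\{e,s\}$ and using $s\notin D_R\big((u^J)_{\{s,t\}}\big)$. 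This identity simultaneously yields R2.

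The main obstacle is not any single hard estimate: all the heavy lifting is already packaged into Theorem \ref{ma16} and Propositions \ref{734} and \ref{R4-R5}. What remains is the bookkeeping required to confirm that the naturally guessed quadruple genuinely satisfies R1--R5 and that its associated map reproduces $M$ exactly. I expect the two genuinely delicate points to be the parabolic computation in the multiplication case establishing $M_{\mathcal{R}}(u)=u^J s\,u_J$, and the correct transport of the distinguished generator $t$ and of the defining data under inversion in the $M(t)=st$ subcase, where Remark \ref{oss} must be applied with care.
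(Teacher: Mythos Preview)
Your proposal is correct and follows essentially the same route as the paper: both reduce to the structural dichotomy of Theorem~\ref{ma16}, use Remark~\ref{oss} to pass from the $M(t)=st$ subcase to the $M(t)=ts$ subcase, and then read off R1--R5 from Proposition~\ref{734}(1) and Proposition~\ref{R4-R5}. Your explicit reduction to the parabolic subsystem generated by $S'=S\cap[e,w]$ (so that every generator lies below $w$) is a nice piece of bookkeeping that the paper leaves implicit; otherwise the arguments coincide, including the choice of $t\in S\setminus J$ and $M_{st}=\rho_s$ in the multiplication case and the observation that $w^J=e$ (equivalently $S\setminus J=\emptyset$ after your reduction) yields $M=\lambda_s$.
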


\begin{proof}
%By Remark \ref{oss}, changing $M$ with $\tilde{M}$ if necessary, 
%Note that $M$ is associated with a right system if and only if the special matching
%\( \tilde{M} \) of \( w^{-1} \) defined by \( \tilde{M}(x) =  (M(x^{-1}))^{-1} \),
%for all \( x\leq w^{-1} \), is associated with a left system. 
%we may assume that we are in case (i) or (ii) of Theorem \ref{ma16}. 

We first assume we are in case (ii) of Theorem \ref{ma16}. If $w^J=e$, then  $M=\lambda_s$ and hence $M$ is associated with a left system, as we noted in Remark \ref{3.3}. If $w^J\neq e$, then  $M$  is associated with the right system $(J,s,t,M_{st})$, where  $J$ and $s$ are those of Theorem \ref{ma16}, $t$ is any Coxeter generator
 in $S\setminus J$ among those that are $\leq w^J$, and $M_{st}$ is the  restriction of $M$ on $[e,w_0(s,t)]$,  which is the right multiplication by
 $s$ (i.e., $M_{st}=\rho_s$). The proof is straightforward (the unique non immediate property, which is Property R3, follows from Proposition \ref{734}, (1)).
%: indeed, the required subset $J$ and element $s$ are those of Theorem \ref{ma16}, %\cite[Theorem 7.6]{BCM1} and we can take as $t$ any element $\leq w^J$ different from $s$: as $M_{st}$ we take the  restriction of $M$ on  $[e,w_0(s,t)]$,  which is the right multiplication by $s$, and we are done. 

Now assume we are not in case (ii) of Theorem \ref{ma16}. By Remark \ref{oss}, changing $M$ with $\tilde{M}$ if necessary, we can assume that we are in case (i) of Theorem \ref{ma16}.  Take as $J$, $s$ and $t$ those of Theorem \ref{ma16}, %\cite[Theorem 7.6]{BCM1}, 
and as $M_{st}$ the restriction of $M$ to  
 $[e,w_0(s,t)]$. We prove that  $(J,s,t,M_{st})$ is a right system. Properties R1 and R2  are immediate, Property R3 follows from Proposition \ref{734}, (1), and Properties R4 and R5 are the contents of Proposition \ref{R4-R5}.   
\end{proof}

\section{Classification of special matchings}

This section is devoted to the proof of our main result which is a characterization and classification of all special matchings of an arbitrary element $w$ in an arbitrary Coxeter system $(W,S)$. In particular, we prove that the matching $M_{\mathcal R}$ associated to a right system $\mathcal R$ is always a special matching and we conclude by characterizing those (right or left) systems that give rise to the same special matching of $w$.

For $s\in {I}\subseteq S$, we let 
\[
 K_s({I}):= C_s\cup (S\setminus {I}).
\]
%and we consider the matching $M_{I}^s:W\rightarrow W$ given by
%\[
% M_{I}^s(u):= u^{I}su_{I}.
%\]
For short, we write $K(I)$ instead of $K_s(I)$ when no confusion arises.

We first concentrate on the simpler and enlightening special case of a right system $\mathcal R=(I,s,t,\rho_s)$, i.e. a right system where the special matching $M_{st}$ of $w_0(s,t)$ is given by the right multiplication by $s$. Note that in this case Properties R1, R4 and R5 are automatically satisfied and that the matching $M_\mathcal R$ is given by 
\[
 M_\mathcal R(u)=u^Isu_I
\]
for all $u\leq w$ by definition of $M_\mathcal R$, and so the element $t$ does not play any role.

\begin{thm}
\label{fa1}
 Let $w\in W$ and $s \in I\subseteq S$ be such that $u^{I}su_{I} \leq w$ for all $u\leq w$. Then the map $M$ on $[e,w]$ given by $M(u)=u^{I}su_{I}$ for all $u\leq w$  is a special matching of $w$ if and only if $w^{I}\in W_{K({I})}$. In particular, for all $t\notin I$, we have that $\mathcal R=(I,s,t,\rho_s)$ is a right system if and only if $M_\mathcal R$ is a special matching.
\end{thm}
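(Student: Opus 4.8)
The plan is to separate the \emph{matching} claim from the \emph{speciality} claim. Under the standing hypothesis that $u^{I}su_{I}\leq w$ for all $u\leq w$, the map $M$ is automatically a matching: since $s\in I$ we have $su_{I}\in W_{I}$, so $M(u)=u^{I}\cdot(su_{I})$ is already a parabolic factorization, which gives $M(u)^{I}=u^{I}$ and $M(M(u))=u^{I}ssu_{I}=u$; moreover $\ell(M(u))=\ell(u^{I})+\ell(su_{I})=\ell(u)\pm 1$ because $s$ changes $\ell(u_{I})$ by one, so $\{u,M(u)\}$ is a Hasse edge. I would record the two facts that $M$ preserves the $W^{I}$-component ($M(u)^{I}=u^{I}$) and that $u\lhd M(u)$ if and only if $s\notin D_{L}(u_{I})$. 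The whole content is therefore whether $M$ is special, and since every rank-$2$ interval of a Coxeter group has cardinality $4$, I would test speciality through the \textbf{N}-avoiding criterion of Proposition~\ref{nonconfigurazione}.

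For \emph{necessity} I would not build any configuration by hand but instead invoke Proposition~\ref{734}(1). After harmlessly replacing $W$ by the parabolic subgroup on $\mathrm{supp}(w)$ (so that $r\leq w$ for every generator, as that proposition requires, without changing $[e,w]$, $M$, or $w^{I}$), one computes $J=\{r\in S:\,M(r)=sr\}=\{r\in I:\,r\leq w\}$ and notes $u^{J}=u^{I}$ for all $u\leq w$. Applying Proposition~\ref{734}(1) with $u=w$ then says that every generator $r\in I$ lying below $w^{I}$ commutes with $s$; equivalently $\mathrm{supp}(w^{I})\cap I\subseteq C_{s}$, which is exactly $w^{I}\in W_{K(I)}$.

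For \emph{sufficiency} I would assume $w^{I}\in W_{K(I)}$ and suppose, for contradiction, that an \textbf{N}-configuration exists: $u\lhd v$, $u\neq M(v)$, with $u\lhd M(u)$ and $M(v)\lhd v$. By the two recorded facts this means $s\notin D_{L}(u_{I})$ and $s\in D_{L}(v_{I})$, while $M(u)^{I}=u^{I}$ and $M(v)^{I}=v^{I}$; also $u^{I}\leq v^{I}\leq w^{I}\in W_{K(I)}$, so both $u^{I},v^{I}\in W_{K(I)}$ by the Subword Property. The first step is to dispose of the case $u^{I}=v^{I}$: there $u_{I}\lhd v_{I}$ in $W_{I}$ (using that left multiplication by the fixed $u^{I}\in W^{I}$ is an order embedding of $W_{I}$), and $u\neq M(v)$ forces $u_{I}\neq sv_{I}$, so $u_{I},v_{I}$ would form an \textbf{N}-configuration for $\lambda_{s}$ on $W_{I}$, impossible since $\lambda_{s}$ is special. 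Hence $u^{I}<v^{I}$ strictly. Now I would fix the reduced word $v=v^{I}\,\textrm{-}\,s\,\textrm{-}\,c$ with $c=sv_{I}$ (legitimate since $s\in D_{L}(v_{I})$) and use that the coatom $u$ of $v$ is a single-letter deletion from this word. Deleting the middle $s$ gives $M(v)$ (excluded), and deleting inside the block for $c$ leaves the $W^{I}$-part equal to $v^{I}$ (the excluded case $u^{I}=v^{I}$); hence $u$ must come from deleting a letter of the $v^{I}$-block, so $u=x\,s\,c$ reduced with $x\leq v^{I}$, whence $x\in W_{K(I)}$.

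The decisive point is then that $\mathrm{supp}(x)\subseteq K(I)$ forces $x_{I}\in W_{C_{s}\cap I}$, so $x_{I}$ \emph{commutes with} $s$; rewriting $u=x^{I}x_{I}sc=x^{I}\,s\,(x_{I}c)$ exhibits the parabolic factorization of $u$ with $u_{I}=s\,(x_{I}c)$, giving $s\in D_{L}(u_{I})$ and contradicting $s\notin D_{L}(u_{I})$. This commutation, made possible precisely by $w^{I}\in W_{K(I)}$, is the crux of the argument and the step I expect to require the most care (in particular the bookkeeping showing $u=xsc$ really is reduced and that $x_{I}$ avoids the generators of $I\setminus C_{s}$). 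Finally, the ``in particular'' is immediate: for $\mathcal{R}=(I,s,t,\rho_{s})$ Properties R1, R4, R5 hold automatically, R2 is the standing hypothesis, and R3 reads ``every $r\in I$ with $r\leq w^{I}$ commutes with $s$'', i.e. $w^{I}\in W_{K(I)}$; so $\mathcal{R}$ is a right system exactly when $w^{I}\in W_{K(I)}$, which by the main equivalence is exactly when $M_{\mathcal{R}}$ is special.
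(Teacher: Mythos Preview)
Your proof is correct. For the sufficiency direction you follow essentially the paper's route: both arguments use the N-avoiding criterion (Proposition~\ref{nonconfigurazione}), split according to whether the deleted letter lies in the $v^{I}$-block or the $v_{I}$-block, and in the former case exploit that $W_{K(I)}\cap W_{I}\subseteq W_{C_{s}}$ so the $W_{I}$-component of the truncated prefix commutes with $s$. The only cosmetic difference is that you argue by contradiction (assuming $M(v)\lhd v$) whereas the paper proves $M(v)\rhd v$ directly, and in the $u^{I}=v^{I}$ case you reduce to an N-configuration for $\lambda_{s}$ on $W_{I}$ while the paper invokes the Lifting Property; the content is the same.

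The necessity direction is where you genuinely diverge. The paper argues self-containedly: assuming some $r\in I\setminus C_{s}$ appears in $w^{I}$, it locates the rightmost such occurrence in a reduced word for $w^{I}$, shows (via Lemma~\ref{5.4gen}) that no letter of $K(I)\setminus C_{s}$ can occur to its right, and then derives $rs=sr$ by computing $M$ on the resulting suffix two ways (via Lemma~\ref{diheequa} and via the definition of $M$). You instead restrict to $W_{\mathrm{supp}(w)}$ and apply Proposition~\ref{734}(1) from \cite{BCM1} with $u=w$, which immediately gives $\mathrm{supp}(w^{I})\cap I\subseteq C_{s}$. Your route is shorter and avoids the word-combinatorics of Lemma~\ref{5.4gen}, at the cost of importing the result from \cite{BCM1}; the paper's route is more self-contained and in fact uses Lemma~\ref{5.4gen}, which it has just proved for exactly this purpose.
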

\begin{proof}
Let ${I}'={I}\setminus C_s$ and $K'=K({I})\setminus C_s$, so that $S={I}'\cup K'\cup C_s$, the union being disjoint.
  Suppose first that $M_{}$ is a special matching of $[e,w]$. If, by contradiction,  $w^{I} \notin W_{K({I})}$ there exists $r\in {I}'$  such that $r\leq w^{I}$. If $\ell(w^{I})=\ell$, let 
  \[
   i=\max\{j:\, w^{I}=s_1\cdots s_{\ell} \textrm{ for some }\, s_1,\ldots,s_{\ell}\in S \textrm{ and } s_j\in {I}' \}
  \]
  and fix a reduced expression $w^{I}=s_1\cdots s_\ell$ such that $s_i\in {I}'$.
  
 Now we claim that for all $j>i$ we have $s_j\notin K'$. Otherwise take a minimal such $j$ and consider the element $u=s_is_{i+1}\cdots s_j$. We have that $s_i$ is its only left descent by the maximality of $i$. We also have $s_{i+1},\cdots, s_{j-1} \in C_s$, by the maximality of $i$ and the minimality of $j$,  contradicting Lemma \ref{5.4gen}. 

 Therefore we have $s_{i+1},\ldots,s_{\ell}\in C_s$. 
 Now we have $M_{}(s_i\cdots s_\ell)=ss_i\cdots s_\ell$ by Lemma \ref{diheequa} and $M_{}(s_i\cdots s_{\ell})=s_i\cdots s_\ell s$ by the definition of $M_{}$ since $s_i\cdots s_{\ell}\in W^{I}$. Since $s_{i+1},\ldots,s_{\ell}\in C_s$ this implies $s_is=ss_i$ contradicting the fact that $s_i\in {I}'$.
 
 Suppose now that $w^{I}\in W_{K({I})}$. Let $u\lhd v\leq w$ be such that $u\lhd M_{}(u)\neq v$. By Proposition~\ref{nonconfigurazione}, we have to show that $v\lhd M(v)$. As $u^{I}\leq v^{I}\leq w^{I}$ (Proposition~\ref{mantiene}), we clearly have $u^{I},v^{I}\in W_{K({I})}$. We know that a reduced expression for $u$ can be obtained from any reduced expression of $v$ by deleting one letter. If we consider a reduced expression for $v$ given by the concatenation of a reduced expression of $v^{I}$ with a reduced expression of $v_{I}$, we have two cases to consider according to whether such letter comes from $v^{I}$ or $v_{I}$.
 
 (1) There exists $a\lhd v_{I}$ such that $u=v^{I}a$. In this case we have $u^{I}=v^{I}$ and $u_{I}=a$ and so $M_{}(u)=u^{I}sa=v^{I}sa$, with $sa\rhd a$. As $a\lhd v_{I}$, $sa\rhd a$ and $sa\neq v_{I}$ (since otherwise $M_{}(u)=v$), we have $sv_{I}\rhd v_{I}$ by the Lifting Property: this implies $M_{}(v)\rhd v$.
 
 (2) There exists $a\lhd v^{I}$ such that $u=av_{I}$. As $a\lhd v^{I}$ we have $a\in W_{K({I})}$ by hypothesis. Therefore $a_{I}\in W_{K({I})}\cap W_{I}$ and in particular $a_{I}$ commutes with $s$. Moreover we have $u=a^{I}a_{I}v_{I}$ and so $u^{I}=a^{I}$ and $u_{I}=a_{I}v_{I}$ and hence
 \[
  M_{}(u)=u^{I}su_{I}=a^{I}sa_{I}v_{I}=a^{I}a_{I}sv_{I}=asv_{I}.
 \]
 It follows that $\ell(sv_{I})>\ell(v_{I})$ since otherwise
 \[
  \ell(M_{}(u))\leq \ell(a)+\ell(sv_{I})\leq \ell(a)+\ell(v_{I})=\ell(u)
 \]
 contradicting the assumption $M_{}(u)\rhd u$. 
 
 The relation $M_{}(v)\rhd v$ immediately follows.

The last statement is now straightforward.
\end{proof}

\begin{cor}
Fix $w\in W$. 
\begin{enumerate}
\item 
\label{uno}Let $\mathcal R=(I,s,t,\rho_s)$ be a right system for $w$. Then $\tilde{\mathcal  R}=(J,s,t,\rho_s)$, where $J={I}\cup C_s\setminus\{t\}$, is also a right system for $w$ and $M_\mathcal R=M_{\tilde {\mathcal R}}$.

\item 
\label{due}
Let $\mathcal R =(I,s,t,\rho_s)$ and $\mathcal R'=(I',s,t',\rho_s)$ be right systems for $w$. Then $M_\mathcal R=M_{\mathcal R'}$ if and only if $I\cup C_s=I'\cup C_s$. 
\end{enumerate} 
\end{cor}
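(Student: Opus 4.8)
The plan is to handle the two parts by complementary methods: a direct group-theoretic identity for (\ref{uno}), and a reduction to dihedral intervals together with the uniqueness criterion of Lemma \ref{diheequa} for (\ref{due}). Throughout I use that, by Theorem \ref{fa1}, a quadruple $(I,s,t,\rho_s)$ is a right system for $w$ exactly when $u^Isu_I\leq w$ for all $u\leq w$ and $w^I\in W_{K(I)}$, and that $W_{K(I)}$ is closed downward in the Bruhat order (a subword of an element of a parabolic subgroup lies in that subgroup).

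For part (\ref{uno}) I first record that $I\subseteq J$ (since $t\notin I$) and that $J\setminus I\subseteq C_s$. I would then prove the pointwise identity $u^Jsu_J=u^Isu_I$ for every $u\leq w$. Using the transitivity of the parabolic factorization for $I\subseteq J$, namely $u^I=u^J\cdot(u_J)^I$ and $u_I=(u_J)_I$, the two products differ only in the position of $(u_J)^I$ relative to the inserted letter $s$, so they agree provided $(u_J)^I$ commutes with $s$. Here is the crux: $(u_J)^I$ lies in $W_J$, and since $(u_J)^I\leq u^I\leq w^I\in W_{K(I)}$ (the middle inequality being Proposition \ref{mantiene}) it also lies in $W_{K(I)}$; as $J\cap K(I)\subseteq C_s$ (this is precisely where $J\setminus I\subseteq C_s$ enters), the element $(u_J)^I$ is a product of generators commuting with $s$, hence commutes with $s$. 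The identity yields simultaneously Property R2 for $\tilde{\mathcal R}$ (because $u^Jsu_J=u^Isu_I\leq w$) and $M_{\tilde{\mathcal R}}=M_\mathcal R$; since $M_\mathcal R$ is a special matching, Theorem \ref{fa1} then certifies that $\tilde{\mathcal R}$ is a right system.

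For part (\ref{due}) I would reduce the equality $M_\mathcal R=M_{\mathcal R'}$ to agreement on the dihedral intervals $[e,w_0(s,r)]$, $r\in S$, via Lemma \ref{diheequa} (both maps being special matchings with value $s$ at $e$). The computation to carry out is that for $u\in W_{\{s,r\}}$ one has $M_\mathcal R(u)=u^Isu_I=su$ if $r\in I$ and $M_\mathcal R(u)=us$ if $r\notin I$; that is, $M_\mathcal R$ restricts to $\lambda_s$ on $[e,w_0(s,r)]$ when $r\in I$ and to $\rho_s$ when $r\notin I$. If $r\in C_s$ these coincide, so the restriction carries no information; if $r\notin C_s$ then $m_{s,r}\geq 3$ and, since $s,r\leq w$ and $W_{\{s,r\}}\cap[e,w]$ has a unique maximal element (Proposition \ref{unicomax}) so that $\ell(w_0(s,r))\geq 2$, one has $\lambda_s\neq\rho_s$ on the interval. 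Hence the restrictions of $M_\mathcal R$ and $M_{\mathcal R'}$ agree on $[e,w_0(s,r)]$ if and only if $r\in I\Leftrightarrow r\in I'$, so they agree on all dihedral intervals if and only if $I\setminus C_s=I'\setminus C_s$, which is exactly $I\cup C_s=I'\cup C_s$; Lemma \ref{diheequa} then upgrades this to $M_\mathcal R=M_{\mathcal R'}$ on all of $[e,w]$, giving both implications.

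The subtlest point is the commutation $(u_J)^I\in W_{C_s}$ in part (\ref{uno}): it is precisely the condition $w^I\in W_{K(I)}$ (equivalently Property R3) that confines $(u_J)^I$ to generators commuting with $s$, and without it the identity $u^Jsu_J=u^Isu_I$ would fail. In part (\ref{due}) the one place needing care is the claim that every $r\notin C_s$ is actually detected by some dihedral interval, which uses $r\leq w$; I would therefore first make the standard harmless reduction to the case $r\leq w$ for all $r\in S$ (as in the hypothesis of Proposition \ref{734}), after which generators outside the support of $w$ play no role and may be freely added to or removed from $I$ and $I'$.
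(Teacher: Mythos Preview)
Your proof is correct and follows essentially the same route as the paper: for part~(\ref{uno}) both arguments show that the ``extra'' factor between the $I$- and $J$-decompositions lies in $W_{J}\cap W_{K(I)}\subseteq W_{C_s}$ and hence commutes with $s$ (you phrase it via $(u_J)^I$, the paper via $(u^I)_J$, but these coincide), and for part~(\ref{due}) both reduce to the dihedral action via Lemma~\ref{diheequa} and read off $I\setminus C_s$ from $M_\mathcal R(r)$. Your treatment is in fact a bit more careful than the paper's in flagging the need for $r\leq w$ in the backward direction of~(\ref{due}).
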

\begin{proof}
We first prove (\ref{uno}). Observe that we have $K({I})=K(J)$. By Theorem \ref{fa1} we have $u^{I}\in W_{K({I})}$ for all $u\leq w$; therefore $(u^{I})_{J}\in W_{K({I})}\cap W_{J}$ and in particular $(u^{I})_{J}$ commutes with $s$. Therefore
 \begin{align*}
  M_{\tilde{\mathcal  R}}(u)&=M_{\tilde{\mathcal  R}}((u^{I})^{J} \cdot (u^{I})_{J} \cdot u_{I})=(u^{I})^{J}\cdot s\cdot (u^{I})_{J}\cdot u_{I}\\ &=(u^{I})^{J} \cdot (u^{I})_{J}\cdot s\cdot u_{I}=u^{I}\cdot s\cdot u_{I}\\ &=M_{\mathcal R}(u)
 \end{align*}
 for all $u\leq w$.
 
We now prove (\ref{due}). If ${I}\cup C_s={I}'\cup C_s$ we have  $M_{\mathcal R}=M_{{\mathcal R}'}$  by Lemma \ref{diheequa}. 
 Suppose $M_{\mathcal R}=M_{\mathcal R'}$ and let $r\notin C_s$. Then
 \[
  r\in K({I}) \Leftrightarrow M_{\mathcal R}(r)=rs
 \]
 and similarly for ${I}'$, and the result  follows.
\end{proof}
\begin{rem}
 A word of caution is needed. If ${I},{I}'\subseteq S$ are such that $s\in {I}\cap {I}'$, $t\notin I\cap I'$ and $I\cup C_s=I'\cup C_s$ then it is not true that $\mathcal R=(I,s,t,\rho_s)$ is a right system if and only if $\mathcal R'=(I',s,t,\rho_s)$ is.  This fails, e.g., in $A_4$ with $w=s_4s_2s_3s_2s_1$, $s=s_3$, $t=s_4$, ${I}=\{s_1,s_2,s_3\}$ and ${I}'=\{s_2,s_3\}$. In this case we have $I\cup C_s=I'\cup C_s=\{s_1,s_2,s_3\}$ and so $K(I)=K(I')=\{s_1,s_3,s_4\}$. Then we can observe that  $w^I=s_4\in W_K$ and in fact one can verify that the map $u\mapsto u^Is_3u_I$ defines a special matching of $w$. On the other hand $w^{I'}=s_4s_3s_2s_1\notin W_K$ and in fact the map $u\mapsto u^{I'}s_2u_{I'}$, although it defines a matching on $[e,w]$, it is not a special matching of $w$ as for example it shows the following N-configuration (see Proposition \ref{nonconfigurazione}):
 
 $$
\begin{tikzpicture}

\node[left] at (-1.5,1.5){$s_4s_2s_1$};
\node[right] at (1.5,1.5){$s_4s_2s_3$};

\draw{(-1.45,3.5)--(-1.45,1.5)}; 
\draw{(-1.55,3.5)--(-1.55,1.5)}; 
\draw{(1.45,3.5)--(1.45,1.5)}; 
\draw{(1.55,3.5)--(1.55,1.5)}; 
\draw{(-1.5,3.5)--(1.5,1.5)}; 

\draw[fill=black]{(1.5,1.5) circle(3pt)};
\draw[fill=black]{(1.5,3.5) circle(3pt)};
\draw[fill=black]{(-1.5,3.5) circle(3pt)};
\draw[fill=black]{(-1.5,1.5) circle(3pt)};

\node[left] at (-1.5,3.5){$s_4s_2s_3s_1$};
\node[right] at (1.5,3.5){$s_4s_3s_2s_3$};

\end{tikzpicture}.
$$

\end{rem}
For $s\in J\subseteq S$ let $M_{J,s}(u):=u^J\,s\,u_J$.
\begin{cor}\label{charpar}Let $w\in W$. Then
\[
\{M_{J,s} \textrm{ with } s\in S,\, C_s\subseteq J\subsetneq S,\, w^J\in W_{C_s\cup (S\setminus J)},\,M_{J,s}(u)\leq w\,\,\forall u\leq w\}
 \]
 is a complete list of all distinct special matchings of $w$ associated with a right system of the form  $\mathcal R=(I,s,t,\rho_s)$. 
%Moreover, two special matchings $M_{J,s}$ and $M_{J',s'}$ of this list are equal if and only if $s=s'$ and $J=J'$.

\end{cor}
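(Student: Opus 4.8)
The plan is to package together the results that have just been established so that Corollary \ref{charpar} becomes essentially a bookkeeping statement. The key point is that the maps $M_{J,s}$ appearing in the list are exactly the maps $M_{\mathcal R}$ for right systems of the form $\mathcal R = (I,s,t,\rho_s)$, and that the side conditions in the set description are precisely the conditions guaranteeing these maps are special matchings, via Theorem \ref{fa1}. First I would observe that for a right system $\mathcal R = (I,s,t,\rho_s)$, the associated map is $M_{\mathcal R}(u) = u^I s u_I = M_{I,s}(u)$, as noted right before Theorem \ref{fa1}; so the special matchings we are listing are exactly those $M_{I,s}$ for which some $(I,s,t,\rho_s)$ is a right system. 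By Theorem \ref{fa1}, given $s \in I \subseteq S$ with $M_{I,s}(u) \leq w$ for all $u \leq w$, the map $M_{I,s}$ is a special matching if and only if $w^I \in W_{K(I)}$, where $K(I) = C_s \cup (S \setminus I)$; and the last sentence of that theorem records that this is equivalent to $(I,s,t,\rho_s)$ being a right system (for any $t \notin I$).

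The main work, which has effectively already been done in the preceding Corollary, is to show that replacing $I$ by its ``saturation'' $J := I \cup C_s$ does not change the map and yields a canonical representative satisfying $C_s \subseteq J$. Concretely, I would argue as follows. Given a right system $(I,s,t,\rho_s)$, set $J = I \cup C_s \setminus \{t\}$; if $t \notin C_s$ this is $I \cup C_s$, and part (\ref{uno}) of the Corollary shows $(J,s,t,\rho_s)$ is again a right system with $M_{\mathcal R} = M_{\tilde{\mathcal R}}$, i.e.\ $M_{I,s} = M_{J,s}$. (One checks $t \notin C_s$ holds automatically here, since $M(t) = ts \neq st$ forces $t \notin C_s$; thus $J = I \cup C_s$ and $C_s \subseteq J$.) This shows every special matching in our family is of the form $M_{J,s}$ with $C_s \subseteq J$. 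Since $s \in J$ and $K(J) = K(I)$, the defining condition $w^J \in W_{K(J)} = W_{C_s \cup (S\setminus J)}$ is inherited from $w^I \in W_{K(I)}$ via Theorem \ref{fa1} applied to $J$; and the proper inclusion $J \subsetneq S$ holds because $t \in S \setminus J$. So every special matching associated with a system $(I,s,t,\rho_s)$ appears in the listed set.

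Conversely, for any pair $(J,s)$ with $C_s \subseteq J \subsetneq S$, $w^J \in W_{C_s \cup (S\setminus J)}$, and $M_{J,s}(u) \leq w$ for all $u \leq w$, I would pick any $t \in S \setminus J$ (nonempty since $J \subsetneq S$) and invoke Theorem \ref{fa1} to conclude that $(J,s,t,\rho_s)$ is a right system and $M_{J,s} = M_{\mathcal R}$ is a special matching; hence every element of the listed set genuinely arises from a system of the required form. Finally, to see the list consists of \emph{distinct} matchings, I would use part (\ref{due}) of the Corollary: for $J, J'$ both containing $C_s$ (with a common $s$), $M_{J,s} = M_{J',s}$ iff $J \cup C_s = J' \cup C_s$, which under $C_s \subseteq J, J'$ means $J = J'$; and matchings with different values $M(e) = s$ are trivially distinct. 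Thus distinct pairs $(J,s)$ in the indexing set give distinct matchings, so the list has no repetitions.

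The only genuinely delicate point is the canonicalization step, i.e.\ verifying that the normal form with $C_s \subseteq J$ both represents every matching in the family and does so uniquely; but this is handed to us almost verbatim by parts (\ref{uno}) and (\ref{due}) of the preceding Corollary, so the main obstacle is really just assembling these pieces and confirming the edge cases (that $t \notin C_s$ is forced, and that $J \subsetneq S$). Everything else is a direct translation between the language of right systems $(I,s,t,\rho_s)$ and the intrinsic description $M_{J,s}(u) = u^J s u_J$ via Theorem \ref{fa1}.
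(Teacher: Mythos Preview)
Your approach is exactly the assembly the paper has in mind---the corollary is stated without proof as a direct consequence of Theorem \ref{fa1} and the preceding corollary---and almost every step is correct. There is, however, one unjustified claim: you assert that ``$t \notin C_s$ holds automatically here, since $M(t) = ts \neq st$.'' Property R1 only stipulates $M_{st}(t) = ts$; it never says $ts \neq st$, and nothing in the definition of a right system $(I,s,t,\rho_s)$ forbids $t \in C_s$. When $t \in C_s$, the set $I \cup C_s \setminus \{t\}$ delivered by part (1) of the preceding corollary does not contain $C_s$, so your canonicalization step breaks down. Concretely, in type $A_3$ with $w = s_3 s_1 s_2 s_1$, $s = s_1$, $t = s_3 \in C_{s_1}$, and $I = \{s_1, s_2\}$, one checks directly that $(I, s, t, \rho_s)$ is a right system whose associated matching equals $\lambda_{s_1}$, and $\lambda_{s_1}$ is not $M_{J,s_1}$ for any proper $J \supseteq C_{s_1}$.

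The repair when $I \cup C_s \subsetneq S$ is easy: first replace $t$ by any $t' \in S \setminus (I \cup C_s)$ before canonicalizing (the last sentence of Theorem \ref{fa1} guarantees $(I,s,t',\rho_s)$ is still a right system, since the conditions do not depend on the choice of $t$), and then apply part (1) of the preceding corollary to obtain $J = I\cup C_s$. The residual case $I \cup C_s = S$ forces $M_{I,s} = \lambda_s$; whether this matching is meant to appear in the list hinges on whether one reads $J \subsetneq S$ or $J \subseteq S$ in the statement, and your argument does not address this boundary case.
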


The next result shows that it is not necessary to know the parabolic decomposition $u^Ju_J$ of an element $u\leq w$ to compute $M_{J,s}(u)$. The corresponding generalization to the general setting of right systems will be the crucial step in our classification. 

\begin{pro}Let $w\in W$, $s\in S$ and $C_s\subseteq J\subseteq S$ be such that $M_{J,s}$ is a special matching of $w$ and $K=C_s\cup (S\setminus J)$.  
Let $u\leq w$, $u=u_1u_2$ with $\ell(u)=\ell(u_1)+\ell(u_2)$, $u_1\in W_K$ and $u_2\in W_J$. Then
\[
 M_{J,s}(u)=u_1su_2.
\]
\end{pro}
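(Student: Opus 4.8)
The goal is to evaluate $M_{J,s}(u)=u^J s\, u_J$ and to verify it equals $u_1 s u_2$, so the plan is to recover the parabolic factorization $u=u^J\cdot u_J$ directly from the given factorization $u=u_1u_2$. First I would factor $u_1=(u_1)^J\cdot (u_1)_J$ according to Proposition~\ref{fattorizzo}, so that $u=(u_1)^J\cdot\big((u_1)_J\,u_2\big)$. Since $(u_1)^J\in W^J$ and $(u_1)_J\,u_2\in W_J$, the uniqueness part of Proposition~\ref{fattorizzo} identifies this as \emph{the} parabolic factorization of $u$, giving $u^J=(u_1)^J$ and $u_J=(u_1)_J\,u_2$.

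The key step is to show that $(u_1)_J$ commutes with $s$. Since $u_1\in W_K$ with $K=C_s\cup(S\setminus J)$, every reduced expression of $u_1$ involves only generators in $K$; as concatenating reduced expressions of $(u_1)^J$ and $(u_1)_J$ produces a reduced expression of $u_1$ (Proposition~\ref{fattorizzo}(ii)), both parabolic factors lie in $W_K$ as well. Hence $(u_1)_J\in W_K\cap W_J=W_{K\cap J}$, and since $C_s\subseteq J$ one computes $K\cap J=\big(C_s\cup(S\setminus J)\big)\cap J=C_s$. Therefore $(u_1)_J\in W_{C_s}$, so it is a product of generators each commuting with $s$, and thus commutes with $s$.

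Combining these two observations I would conclude
\[
M_{J,s}(u)=u^J s\, u_J=(u_1)^J\, s\,(u_1)_J\,u_2=(u_1)^J (u_1)_J\, s\,u_2=u_1 s u_2,
\]
where the third equality moves $s$ past $(u_1)_J$. The main (indeed the only real) obstacle is the second step: proving that the $W_J$-part of $u_1$ lands inside $W_{C_s}$ and hence commutes with $s$. This rests on the fact that the parabolic factors of an element of $W_K$ remain in $W_K$, together with the standard identity $W_K\cap W_J=W_{K\cap J}$; once these are in hand the computation is immediate. I note that the hypothesis $\ell(u)=\ell(u_1)+\ell(u_2)$ is not actually needed in this argument, since the identification of $u^J$ and $u_J$ above invokes only the uniqueness of the parabolic factorization.
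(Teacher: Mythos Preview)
Your argument is correct. By factoring $u_1=(u_1)^J(u_1)_J$, noting $(u_1)_J\in W_K\cap W_J=W_{C_s}$ commutes with $s$, and invoking the uniqueness of the $W^J\times W_J$ factorization, you reach the conclusion in one step.

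The paper's proof proceeds instead by induction on $\ell(u_1)$: if $u_1\neq u^J$ it locates a right descent $c\in K\cap J=C_s$ of $u_1$, moves $c$ across to form $u=(u_1c)(cu_2)$, and applies the induction hypothesis to this shorter $u_1'=u_1c$, using $csc=s$ at the end. Your approach is more direct and, as you note, does not use the hypothesis $\ell(u)=\ell(u_1)+\ell(u_2)$; the paper's inductive argument does rely on that hypothesis to guarantee the new decomposition still has additive lengths. Conversely, the paper's peeling-off-one-generator technique generalizes more readily to the harder case treated next (Theorem~\ref{fa2}), where one cannot simply read off the full parabolic factorization and must instead shuffle generators across the middle factor one at a time.
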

\begin{proof}We proceed by induction on $\ell(u_1)$.
 If $\ell(u_1)=0$ or $u_1=u^J$ it is clear. Otherwise there exists $c\in C_s$ such that $c\in {D}_R(u_1)$. Then if we let $u_1'=u_1c$ and $u_2'=cu_2$ we have $\ell(u_1')=\ell(u_1)-1$ and $\ell(u_1')+\ell(u_2')=\ell(u)$. Moreover $u_1'\in W_K$ and $u_2'\in W_J$. Therefore by our induction hypothesis we have
 \[
  M_{J,s}(u)=M_{J,s}(u_1'u_2')=u_1'su_2'=u_1cscu_2=u_1su_2,
 \]
as desired.
\end{proof}
Now we concentrate on the case of a right system $(J,s,t,M_{st})$ such that $M_{st}\not \equiv \rho_s$. The first result that we need is the following analogue of Proposition \ref{stcst}.
\begin{lem}
 Let $(J,s,t,M_{st})$ be a right system for $w$ such that $M_{st}\not \equiv \rho_s$, and let ${c}\in S$ be such that $m_{c,s}=2$ and $m_{c,t}\geq 3$. Then
 \begin{enumerate}
  \item if $m_{{c},t}>3$ then $st{c}t\not \leq w$;
  \item if $m_{{c},t}=3$ then $st{c}st\not \leq w$.
 \end{enumerate}
\end{lem}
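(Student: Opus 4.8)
The plan is to reduce this statement about the matching $M_{\mathcal R}$ associated with a right system to the already-proved Proposition \ref{stcst}, which is the analogous statement phrased in terms of an arbitrary special matching rather than one coming from a right system. The key point is that the hypotheses of Proposition \ref{stcst} are: $M$ is a special matching of $w$ with $M(e)=s$, $M(t)=ts$, and $M\not\equiv\rho_s$ on $[e,w_0(s,t)]$. So the main task is to produce, from the given right system $(J,s,t,M_{st})$ with $M_{st}\not\equiv\rho_s$, a genuine special matching of an element to which Proposition \ref{stcst} applies, with the right values at $e$ and $t$.

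First I would observe that, since $M_{st}$ is a special matching of $w_0(s,t)$ with $M_{st}(e)=s$ and $M_{st}(t)=ts$, and $M_{st}\not\equiv\rho_s$, the restriction $M_{\mathcal R}|_{[e,w_0(s,t)]}$ agrees with $M_{st}$; indeed by Corollary \ref{definiscematching2} the value $M_{\mathcal R}(u)$ is controlled by $M_{st}(\bar u)$, and for $u\leq w_0(s,t)$ one checks that $\bar u=u$ so that $M_{\mathcal R}$ restricts to $M_{st}$ on $[e,w_0(s,t)]$. In particular $M_{\mathcal R}(e)=s$, $M_{\mathcal R}(t)=ts$, and $M_{\mathcal R}\not\equiv\rho_s$ on $[e,w_0(s,t)]$. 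If I knew already that $M_{\mathcal R}$ is a special matching of $w$, then Proposition \ref{stcst} would apply verbatim to $M_{\mathcal R}$ and give the conclusion. However, at this point in the paper it has \emph{not} yet been established that $M_{\mathcal R}$ is special (that is the content of the classification theorem still to come), so I cannot invoke Proposition \ref{stcst} directly with $M=M_{\mathcal R}$.

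The cleaner route, therefore, is to avoid using that $M_{\mathcal R}$ is special and instead argue directly, mimicking the proof of Proposition \ref{stcst} but with $M_{st}$ in place of the global matching. The strategy is: assume for contradiction that $st c t\leq w$ (case 1) or $st c s t\leq w$ (case 2), and aim to derive a contradiction using only the combinatorial data of the right system. I would fix a reduced expression for $w$ and, as in Proposition \ref{stcst}, reduce to showing that $s\textrm{-}t\textrm{-}c\textrm{-}t$ cannot be a subword. The element $x_0$ (the minimal element $\leq w_0(s,t)$ with $M_{st}(x_0)\neq x_0 s$) plays the same role as before; when $x_0=st$ the claim follows from Lemma \ref{stct}, and otherwise the argument invoking Lemma 6.2 of \cite{BCM1} about the only elements covering $x_0$ containing $c$ being $x_0 c$ and $c x_0$ goes through identically, because that lemma depends only on $M_{st}$ as a special matching of $w_0(s,t)$.

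The step I expect to be the main obstacle is justifying that the covering/subword analysis of Proposition \ref{stcst} transfers to the present setting, since there the element $c$ and the relevant products such as $x_0 t c t$ need not lie in $w_0(s,t)$ but rather in $[e,w]$, so I must be careful that the values of $M_{st}$ I use are genuinely constrained by the right-system axioms (Property R2 guaranteeing that the candidate images lie below $w$) rather than by global specialness of $M_{\mathcal R}$. In practice the simplest packaging is probably to note that Lemma \ref{stct} and the subword argument of Proposition \ref{stcst} only ever use $M$ restricted to dihedral intervals $[e,w_0(s,r)]$ together with the $K_{3,2}$-avoidance (Proposition \ref{k32}) and Lemma 6.2 of \cite{BCM1}, and that all of these inputs are available from $(J,s,t,M_{st})$ alone. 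I would thus phrase the proof as: "the argument of Proposition \ref{stcst} applies mutatis mutandis, reading $M_{st}$ for $M$ on $[e,w_0(s,t)]$, since that proof only uses the restriction of the special matching to dihedral intervals," and then spell out the one place (the definition of $x_0$ and the reduction to Lemma \ref{stct}) where the restricted matching $M_{st}$ is what is actually needed.
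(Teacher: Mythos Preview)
Your approach has a genuine gap, and the paper's proof takes an entirely different (and much shorter) route.

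The gap is in your claim that Lemma \ref{stct} and Lemma 6.2 of \cite{BCM1} ``only ever use $M$ restricted to dihedral intervals.'' This is not correct. The proof of Lemma \ref{stct} computes $M$ on elements such as $s{c}t$, $st{c}$, $t{c}t$, and $st{c}t$, which lie in the rank-three parabolic $W_{\{s,t,{c}\}}$, not in any dihedral interval. Those computations are obtained by combining dihedral values with the $K_{3,2}$-avoidance of Proposition \ref{k32}, but the deduction ``$M(s{c}t)$ is the unique element covering $s{c}t$, $tst$, and ${c}ts$'' presupposes that $M$ is a special matching on an interval containing $s{c}t$, which is exactly what you do not yet have. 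Similarly, Lemma 6.2 of \cite{BCM1} is a statement about all covers of $x_0$ inside $[e,w]$ and its proof uses the global specialness of $M$; you cannot invoke it knowing only $M_{st}$ on $[e,w_0(s,t)]$. So ``mutatis mutandis'' does not go through here.

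The paper's argument avoids all of this by using the right-system axioms directly, via Lemma \ref{stleq}. Set $u=st{c}t$ if $m_{{c},t}>3$ and $u=st{c}st$ if $m_{{c},t}=3$. A short computation (using $m_{s,t}\geq 4$, which follows from $M_{st}\not\equiv\rho_s$) shows that in either case $u\in W^J$ and $(u^J)^{\{s,t\}}=st{c}$. If $u\leq w$, then by Proposition \ref{mantiene} we get $s,t\leq st{c}\leq (w^J)^{\{s,t\}}$, and Lemma \ref{stleq} forces $M_{st}\equiv\rho_s$, a contradiction. This is a two-line proof that uses only the parabolic decomposition and Property R4 (through Lemma \ref{stleq}), and never needs to know that $M_{\mathcal R}$ is special.
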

\begin{proof}If $m_{{c},t}>3$ let $u=st{c}t$ and if $m_{{c},t}=3$ let $u=st{c}st$. Then in both cases we have $u^J=u$ (notice that $M_{st}\not \equiv \rho_s$ implies  $m_{s,t}>3$) and
\[
 (u^J)^{\{s,t\}}=st{c}.
\]
In particular we have $s,t\leq (u^J)^{\{s,t\}}\leq (w^J)^{\{s,t\}}$ which contradicts Lemma \ref{stleq}.
\end{proof}

\begin{thm}
\label{fa2}
Let $\mathcal R=(J,s,t,M_{st})$ be a right system for $w$ such that $M_{st}\not \equiv \rho_s$ and $K:=K(J)=(S\setminus J)\cup C_s$.
 Let $u=u_1u_2u_3\leq w$ be such that the following conditions are satisfied
 \begin{itemize}
  \item $\ell(u)=\ell(u_1)+\ell(u_2)+\ell(u_3)$;
  \item $u_1 \in W_{K\setminus\{s\}}\cup W_{K\setminus\{t\}}$;
  \item $u_2 \in W_{s,t}$;
  \item $u_3 \in W_J$;
  \item $s,t\not \in {D}_R(u_1)$;
  \item $s \notin {D}_L(u_3)$.
 \end{itemize}
 % Then $s\leq u_1$ if and only if $s \leq (u^J)^{\{s,t\}}$ and similarly for $t$, and $s\leq u_3$ if and only if $s\leq \, ^{\{s\}}u_J$. 
 Then  
 \[
  M_\mathcal R (u)=u_1 M_{st}(u_2) u_3
 \]
 and $u \lhd M_\mathcal R(u)$ if and only if $u_2 \lhd M_{st}(u_2)$.
 \end{thm}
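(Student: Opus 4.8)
The plan is to reduce the computation of $M_{\mathcal R}(u)$ to the defining formula
$$M_\mathcal R (u) = (u^{J})^{\{s,t\}}\, \cdot \, M_{st} \Big( (u^{J})_{\{s,t\}} \,  \cdot  \,  _{\{s\}} (u_{J}) \Big) \, \cdot \, ^{\{s\}}(u_{J})$$
by identifying the various parabolic components of $u = u_1u_2u_3$ explicitly. The key structural observation is that the commutation generators $C_s$ can be shuffled freely past $s$, so the decomposition $u_1u_2u_3$ is not the parabolic decomposition but differs from it only by such commuting moves. First I would handle the two cases $u_1 \in W_{K\setminus\{s\}}$ and $u_1 \in W_{K\setminus\{t\}}$ together by noting that in either case $u_1 \in W_K = W_{C_s \cup (S\setminus J)}$, so the portion of $u_1$ lying in $W_J$ consists only of letters in $C_s$, which commute with $s$. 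The conditions $s,t\notin D_R(u_1)$ and $s\notin D_L(u_3)$ are precisely what guarantee that, when we compute the parabolic factorizations, no letter of $u_2$ gets absorbed into $u_1$ or $u_3$ and that $u_2$ genuinely occupies the $\{s,t\}$-slot.

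The central step is to compute the parabolic pieces. Writing $u_1 = (u_1)^J (u_1)_J$ with $(u_1)_J \in W_{C_s}$ (so $(u_1)_J$ commutes with $s$), I would argue that $u^J = (u_1)^J \cdot u_2^{\{?\}}$-type identities hold, and more precisely that $(u^J)^{\{s,t\}} = u_1 \cdot (\text{appropriate piece})$ while $(u^J)_{\{s,t\}} \cdot {}_{\{s\}}(u_J)$ collapses to $u_2$ up to the harmless $C_s$-part of $u_1$. The case split on $u_1 \in W_{K\setminus\{s\}}$ versus $W_{K\setminus\{t\}}$ is needed because it controls which of $s,t$ can appear in $(u^J)^{\{s,t\}}$: by Lemma~\ref{stleq} we cannot have both $s,t \leq (w^J)^{\{s,t\}}$ when $M_{st}\not\equiv\rho_s$, and the hypothesis on $u_1$ is exactly compatible with that restriction. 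Once the components are identified, substituting into the defining formula and using that the $C_s$-letters of $u_1$ commute with $s$ (hence can be moved across the $M_{st}(u_2)$ factor, which only inserts or removes an $s$) yields $M_\mathcal R(u) = u_1 M_{st}(u_2) u_3$ after reassembling. Throughout I would lean on Proposition~\ref{definiscematching} to track how the parabolic decomposition of $M_\mathcal R(u)$ relates to that of $u$.

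I expect the main obstacle to be the careful verification that the length-additivity $\ell(u)=\ell(u_1)+\ell(u_2)+\ell(u_3)$ together with the descent conditions forces the parabolic decomposition $u = u^J u_J$ and the finer decomposition $u^J = (u^J)^{\{s,t\}}(u^J)_{\{s,t\}}$ to align with the given factorization modulo the $C_s$-shuffle. Specifically, one must show that the $\{s,t\}$-part of $u^J$, together with the leading $\{s\}$-part of $u_J$, reconstitutes exactly $u_2$; the subtlety is that $u_2 \in W_{\{s,t\}}$ need not lie in $W^J$ (since $s \in J$), so part of $u_2$ may migrate into $u_J$, and one must check this migration is precisely captured by the slot $_{\{s\}}(u_J)$. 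The hypothesis $s\notin D_L(u_3)$ ensures nothing from $u_3$ crosses back leftward to corrupt this identification.

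For the final equivalence, once $M_\mathcal R(u)=u_1 M_{st}(u_2) u_3$ is established, the claim $u\lhd M_\mathcal R(u)\iff u_2\lhd M_{st}(u_2)$ follows immediately from Corollary~\ref{definiscematching2}, since by that corollary $u\lhd M_\mathcal R(u)$ holds exactly when $\bar u\lhd M_{st}(\bar u)$, where $\bar u=(u^{J})_{\{s,t\}}\cdot\, _{\{s\}}(u_{J})$, and the parabolic identification carried out above shows $\bar u = u_2$. Thus this half requires no extra work beyond the component computation.
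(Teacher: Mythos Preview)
Your strategy of directly identifying the parabolic components $(u^J)^{\{s,t\}}$, $\bar u=(u^J)_{\{s,t\}}\cdot{}_{\{s\}}(u_J)$, and ${}^{\{s\}}(u_J)$ from the given factorization $u=u_1u_2u_3$ has a real gap. You are right that $(u_1)_J\in W_{C_s\cap J}$ commutes with $s$, but these $C_s$-letters need \emph{not} commute with $t$, so they cannot simply be ``shuffled past'' $u_2\in W_{\{s,t\}}$ as you propose. Concretely, take $c\in C_s\cap J$ with $m_{c,t}=3$, and set $u_1=tc$, $u_2=t$, $u_3=e$ (all hypotheses of the theorem are met). Then $u=tct=ctc$, so $u^J=ct$, $u_J=c$, giving $(u^J)^{\{s,t\}}=c$, $\bar u=t$, ${}^{\{s\}}(u_J)=c$: the parabolic pieces are $c,\,t,\,c$, not $tc,\,t,\,e$. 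The required identity $c\cdot M_{st}(t)\cdot c=tc\cdot M_{st}(t)\cdot e$ does hold (both equal $tcts$), but it uses the braid relation $ctc=tct$, not merely commutation of $c$ with $s$. Your assertion that ``$M_{st}(u_2)$ only inserts or removes an $s$'' is also false under the standing hypothesis $M_{st}\not\equiv\rho_s$; for instance one may well have $M_{st}(st)=tst$.

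The paper's proof addresses exactly this obstruction by induction on $\ell(u_1)$. When $u_1u_2\varepsilon\notin W^J$ one locates the obstructing letter $c\in C_s\cap J$ in $D_R(u_1)$; if $c$ does not commute with $t$, one uses the braid relation in $W_{\{c,t\}}$ to rewrite $u=u_1u_2u_3$ as $u=u_1'u_2'u_3'$ with $\ell(u_1')<\ell(u_1)$ (still satisfying the hypotheses), applies the induction hypothesis, and then algebraically undoes the rewriting. This requires a case analysis on $u_2\varepsilon$ (the cases $u_2\varepsilon=e$, $u_2\varepsilon=t$, and $st\leq u_2\varepsilon$, the last being shown impossible), and that mechanism is precisely what is missing from your outline. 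Your final paragraph invoking Corollary~\ref{definiscematching2} via $\bar u=u_2$ would be fine \emph{once} the main formula is proved, but establishing that formula is where the real work lies.
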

\begin{proof}
Let 
\[
 \varepsilon=\begin{cases}
              e& \textrm{if }u_2s>u_2;\\ s&\textrm{if } u_2s<u_2.
             \end{cases}
\]
We proceed by induction on $\ell(u_1)$. If $\ell(u_1)=0$ we have $u^J=u_2\varepsilon$ and $u_J=\varepsilon u_3$, and therefore $(u^J)^{\{s,t\}}=e$, $(u^J)_{\{s,t\}}=u_2\varepsilon$, $_{\{s\}}(u_J)=\varepsilon$ and $^{\{s\}}(u_J)=u_3$. Therefore 
\[
 M_{\mathcal R}(u)=(u^J)^{\{s,t\}} \cdot M_{st}((u^J)_{\{s,t\}} \cdot \,_{\{s\}}(u_J)) \cdot \,^{\{s\}}u_J=M_{st}(u_2\varepsilon \varepsilon) u_3=M_{st}(u_2) u_3.
\]

Assume $\ell(u_1)\geq 1$. If $u_1u_2\varepsilon\in W^J$, i.e. $u_1u_2\varepsilon =u^J$, we have $(u^J)^{\{s,t\}}=u_1$, $(u^J)_{\{s,t\}}=u_2\varepsilon$, $_{\{s\}}(u_J)=\varepsilon$ and $^{\{s\}}(u_J)=u_3$ and the result is straightforward as in the case $\ell(u_1)=0$.

If $u_1u_2\varepsilon\notin W^J$, there exists $c\in J\cap K\subseteq C_s$ such that $c \in {D}_R(u_1u_2 \varepsilon)$, and we first claim that $c\in {D}_R(u_1)$. If $u_2\varepsilon=e$ this is trivial, otherwise we have  $t\in{D}_R(u_1u_2\varepsilon)$. Note that $s\notin {D}_R(u_1u_2 \varepsilon)$ since otherwise, being $t\in {D}_R(u_1u_2 \varepsilon)$, by a well known fact $u_1u_2 \varepsilon$ would have a reduced expression ending with $\cdots tst \cdots $ ($m_{st}$ factors) and $\{s,t\} \cap  {D}_R(u_1)$ would not be empty.  Hence $c\neq s$. By Lemma \ref{lemma0cap} we have $c\in {D}_R(u_1)$. 

Now if $c$ commutes with both $s$ and $t$ we have $u_1 u_2 u_3=(u_1c) u_2 (cu_3)$, and this triplet still satisfies the conditions of the statement and the result clearly follows by induction. 

So we can assume that $c$ does not commute with $t$. If $u_2\varepsilon=e$, i.e. $u_2\in\{e,s\}$, we have $u_1 u_2 u_3=(u_1{c})(u_2)({c}u_3)$; we observe that $s\notin {D}_R(u_1{c})$ since otherwise $s\in {D}_R(u_1)$ and similarly $s\notin {D}_L({c}u_3)$. If $t \notin  {D}_R(u_1 {c})$, this triplet satisfies the conditions of the statement and the result follows by induction. If $t \in  {D}_R(u_1 {c})$ then $s\not \leq u_1$ by hypothesis, and again  the   result follows by induction by considering the triplet $(u_1{c}t)(tu_2)({c}u_3)$.

We are therefore reduced to the case $u_2\varepsilon \neq e$ and so $t$ is a right descent of $u_2\varepsilon$; hence both $t$ and ${c}$ are right descents  of $u_1 u_2 \varepsilon$. In particular we have a reduced expression for $u_1 u_2 \varepsilon$ which terminates in $t{c}t$ and so $t{c}\leq u_1u_2\varepsilon$; this forces $t\leq u_1$ and so $s\not \leq u_1$.

Now, if $u_2\varepsilon=t$ we let $m=t{c}t\cdots$ ($m_{t,{c}}$ factors) and so $u_1u_2\varepsilon=am$ with $\ell(u_1u_2\varepsilon)=\ell(a)+\ell(m)$ and therefore, since $u_2\varepsilon=t$, we have $u_1=amt$ with $\ell(u_1)=\ell(a)+\ell(m)-1$. Moreover, since $c$ and $t$ are both right descents of $am=u_1u_2\varepsilon$ we have that $\ell(amct)=\ell(am)-2=\ell(u_1)-1$. So 
\[
 u=am \varepsilon u_3=amct\, tc\varepsilon u_3=(amct)(t\varepsilon) (cu_3)=(amct) u_2 (cu_3).
\]
This decomposition of $u$ satisfies our conditions and so we can conclude by induction (as we have already observed,  $\ell(am{c}t)=\ell(u_1)-1$) that
\begin{align*}
 M_{\mathcal R}(u)&=am{c}t M_{st}(u_2) {c} u_3=am{c}t \, t\varepsilon s\, {c} u_3\\& =am \varepsilon s u_3=u_1 u_2 s u_3\\&=u_1 M_{st}(u_2) u_3,
\end{align*}
as, clearly, since $u_2\varepsilon=t$ we have either $u_2=ts$ or $u_2=t$ and in both cases $M_{st}(u_2)=u_2s$.

We are left with the case $st\leq u_2\varepsilon$. We observe that $u_1u_2 \varepsilon$ has a reduced expression that terminates with $t\textrm{-}{c}\textrm{-}t$ and a reduced expression which terminates in $s\textrm{-}t$, and therefore $u_1u_2\varepsilon t$ has a reduced expression which terminates in $s$ and a reduced expression which terminates with $t\textrm{-}c$. To transform one of these two reduced expressions to the other using braid moves we necessarily have to perform a braid relation between $s$ and $t$. Therefore we have that
% if $m_{s,t}=m$ we have that 
$tst\cdots$ ($m_{s,t}$ factors) is $\leq u_1u_2\varepsilon t$.  As we already know that $s\not \leq u_1$, we deduce that $u_2\varepsilon t \geq sts\cdots$ ($m_{s,t}-1$ factors). But $u_2\varepsilon t \in W_{s,t}$ and $t\notin D_R(u_2\varepsilon t)$ by construction, so $u_2\varepsilon t=sts\cdots$ ($m_{s,t}-1$ factors). Therefore $u_2\varepsilon=sts\cdots$ ($m_{s,t}$ factors). This is a contradiction since $s\notin {D}_R (u_2\varepsilon)$.
%In particular $s$ is also a right descent of $u_1u_2\varepsilon$.  So we have that $t,s,{c}\in {D}_R(u_1u_2\varepsilon)$ and this implies that the parabolic subgroup $W_{s,t,{c}}$ is finite and that the maximal element of $W_{s,t,{c}}$ is $\leq u_1u_2\varepsilon$. As $m_{s,t}\geq 4$ and $m_{r,t}\geq 3$ we deduce that $W_{s,t,{c}}$  is either of type $B_3$ or $H_3$ and in both cases we conclude that $st{c}st\leq u_1u_2\varepsilon$ which contradicts a previous result.
\end{proof}

\begin{thm}\label{special}
 Let $\mathcal R=(J,s,t,M_{st})$ be a right system for $w$. Then $M_{\mathcal R}$  is special.
\end{thm}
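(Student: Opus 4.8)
The plan is to verify the N-avoiding criterion of Proposition \ref{nonconfigurazione}, which applies here because every rank $2$ interval of a Coxeter group has cardinality $4$. By Corollary \ref{definiscematching2} we already know that $M_{\mathcal R}$ is a matching of $[e,w]$, so only the absence of N-configurations remains to be shown. First I would dispose of the multiplication case: if $M_{st}\equiv \rho_s$, then $M_{\mathcal R}(u)=u^{J}su_J$ for all $u\le w$, and the final assertion of Theorem \ref{fa1} says precisely that, since $\mathcal R$ is a right system, $M_{\mathcal R}$ is special. So from now on I assume $M_{st}\not\equiv \rho_s$, which is exactly the hypothesis under which Theorem \ref{fa2} is available.

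The heart of the argument is a reduction to the dihedral interval $[e,w_0(s,t)]$, where $M_{st}$ is already special by Property R1. For $z\le w$ I set $\bar z=(z^{J})_{\{s,t\}}\cdot\,{}_{\{s\}}(z_J)\in[e,w_0(s,t)]$ and write $z=z_1\,\bar z\,z_3$ with $z_1=(z^{J})^{\{s,t\}}$ and $z_3={}^{\{s\}}(z_J)$. Using Property R3 (every generator $\le w^J$ lying in $J$ commutes with $s$, hence lies in $C_s\subseteq K$) together with Proposition \ref{734}, part (2) (which forbids $s$ and $t$ from both being $\le (w^J)^{\{s,t\}}$), I would check that this triple meets all the hypotheses of Theorem \ref{fa2}. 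That theorem then yields $M_{\mathcal R}(z)=z_1\,M_{st}(\bar z)\,z_3$ and, crucially, $z\lhd M_{\mathcal R}(z)$ if and only if $\bar z\lhd M_{st}(\bar z)$.

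Now suppose, for contradiction, that $M_{\mathcal R}$ exhibits an N-configuration: $u\lhd v$, $u\ne M_{\mathcal R}(v)$, with $u\lhd M_{\mathcal R}(u)$ and $M_{\mathcal R}(v)\lhd v$. By the previous paragraph this means $\bar u\lhd M_{st}(\bar u)$ while $M_{st}(\bar v)\lhd\bar v$. The key structural step is the dichotomy: since $u\lhd v$, either $\bar u=\bar v$, or $\bar u\lhd\bar v$ together with $u_1=v_1$ and $u_3=v_3$. In the first case $\bar u$ and $\bar v$ would move in the same direction under $M_{st}$, contradicting that one goes up and the other down. In the second case $\bar u\lhd\bar v$ is a cover in $[e,w_0(s,t)]$ with $\bar u$ going up and $\bar v$ going down; since $M_{st}$ is special, hence N-avoiding, this forces $\bar u=M_{st}(\bar v)$, and then $M_{\mathcal R}(v)=v_1\,M_{st}(\bar v)\,v_3=u_1\,\bar u\,u_3=u$, contradicting $u\ne M_{\mathcal R}(v)$. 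Either way we reach a contradiction, so $M_{\mathcal R}$ is N-avoiding and therefore special.

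I expect the dichotomy to be the main obstacle, namely passing from the single cover $u\lhd v$ in $[e,w]$ to controlled behaviour of the triple $(z_1,\bar z,z_3)$. Deleting one letter from the reduced word $z_1\,\bar z\,z_3$ of $v$ and then recomputing the parabolic factorisation of $u$ could in principle redistribute letters among the three blocks, so it is not automatic that exactly one block changes, nor that a change of the middle block is a single cover with the outer blocks frozen. Proving this requires combining the combinatorics of reduced words and braid moves with Properties R3, R4 and R5, Proposition \ref{734}, and the $K_{3,2}$-avoidance of Proposition \ref{k32}; once that rigidity is secured, the reduction to the specialness of $M_{st}$ on the dihedral interval closes the argument.
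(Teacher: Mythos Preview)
Your high-level strategy---reduce to N-avoidance via Proposition~\ref{nonconfigurazione}, dispose of $M_{st}\equiv\rho_s$ by Theorem~\ref{fa1}, and then use Theorem~\ref{fa2} to reduce to the specialness of $M_{st}$---is exactly the paper's. Two points, however.

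First, a small circularity: you invoke Proposition~\ref{734}(2) to forbid $s,t\le (w^J)^{\{s,t\}}$, but that proposition is stated for special matchings of $w$, which is what you are proving. The correct (and self-contained) reference is Lemma~\ref{stleq}: since $M_{st}\not\equiv\rho_s$, Property~R4 alone excludes $s,t\le (w^J)^{\{s,t\}}$.

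Second, and more substantially, the ``dichotomy'' you isolate is the wrong mechanism, and your own caveat is well founded: deleting a letter from $v_1$ or $v_3$ can genuinely change the canonical triple $(u_1,\bar u,u_3)$, so neither $\bar u=\bar v$ nor $\bar u\lhd\bar v$ with frozen outer blocks need hold. The paper avoids this entirely. The point of Theorem~\ref{fa2} is that it applies to \emph{any} admissible decomposition $u=u_1u_2u_3$, not just the canonical one. Starting from $v=v_1v_2v_3$ (canonical) and $u\lhd v$, the paper deletes a letter from one of the three blocks and keeps the \emph{inherited} decomposition of $u$. When the letter comes from $v_2$, the triple $(v_1,a,v_3)$ with $a\lhd v_2$ is already admissible. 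When it comes from $v_1$, the triple $(a,v_2,v_3)$ may fail only because some $\beta\in\{s,t\}$ lies in $D_R(a)$; one then shifts $\beta$ into the middle and uses Property~R4 (since $\beta\le (w^J)^{\{s,t\}}$) to get $M_{\mathcal R}(u)=a\,M_{st}(v_2)\,v_3$ regardless. When it comes from $v_3$, one similarly absorbs a possible $s\in D_L(a)$ into the middle and uses Lemma~\ref{davaw} together with Property~R5 to obtain $M_{\mathcal R}(u)=v_1\,M_{st}(v_2)\,a$. In all three cases the middle block that governs the direction of $M_{\mathcal R}(u)$ is still $v_2$ itself, so $u\lhd M_{\mathcal R}(u)$ forces $v_2\lhd M_{st}(v_2)$ and hence $v\lhd M_{\mathcal R}(v)$. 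This is the missing idea: rather than tracking how the canonical $\bar u$ relates to $\bar v$, exploit the flexibility of Theorem~\ref{fa2} on non-canonical decompositions and let R4/R5 repair the boundary defects.
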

\begin{proof}
If $M_{st}\equiv \rho_s$ we have $M_{\mathcal R}(u)=u^Jsu_J$, and the result follows by Theorem~\ref{fa1}. So we can assume that $M_{st}\not \equiv \rho_s$.

 Let $u\lhd v\leq w$ be such that $ u\lhd M_{\mathcal R}(u)\neq v$. By Proposition \ref{nonconfigurazione}, we have to show that $v\lhd M_{\mathcal R}(v)$. Let $v=v_1v_2v_3$, with $v_1=(v^J)^{\{s,t\}}$, $v_2=(v^J)_{\{s,t\}}\cdot \, _{\{s\}}(v_J)$ and $v_3=\,^{\{s\}}(v_J)$. We know that a reduced expression for $u$ can be obtained from any reduced expression of $v$ by deleting one letter. If we consider a reduced expression for $v$ given by the concatenation of reduced expressions of $v_1$, $v_2$ and $v_3$,  we have three cases to consider according to whether such letter comes from $v_1$, $v_2$ or $v_3$.

 Case 1. Let $a\lhd v_1$ be such that $u=av_2v_3$. If $s,t\notin {D}_R(a)$ then the decomposition $a v_2 v_3$ of $u$ satisfies the conditions of Theorem~\ref{fa2} and so $M_{\mathcal R}(u)=aM_{st}(v_2)v_3$. In particular $M_{st}(v_2)\rhd v_2$ and so $M_{\mathcal R}(v)\rhd v$. If there exists $\beta\in \{s,t\}\cap {D}_R(a)$ (such $\beta$ is necessarily unique as $s$ and $t$ cannot be both $\leq a\lhd (v^J)^{\{s,t\}}$ since $M_{st}\not \equiv \rho_s$), then the decomposition $u=(a\beta)(\beta v_2)v_3$ also satisfies the conditions in Theorem~\ref{fa2} and so 
 \[
  M_{\mathcal R}(u)=a\beta M_{st}(\beta v_2)v_3.
 \]
 But since $\beta\leq a$ we also have $\beta\leq (v^J)^{\{s,t\}}\leq (w^J)^{\{s,t\}}$ and so $M_{st}$ commutes with $\lambda_\beta$ on $[e,w_0(s,t)]$ by Property R4. Therefore
 \[
  M_{\mathcal R}(u)=a\beta \beta M_{st}(v_2)v_3=aM_{st}(v_2) v_3
 \]
 and we conclude as in the other case.
 
 Case 2. Let $a\lhd v_2$ be such that $u=v_1 a v_3$; this decomposition automatically satisfies the conditions of Theorem~\ref{fa2}, so $M_{\mathcal R}(u)=v_1 M_{st}(a) v_3$, and the result follows since $M_{st}$ is special.
 
 Case 3. Let $a\lhd v_3$ be such that $u=v_1 v_2 a$. If $s\in {D}_L(a)$ then we can apply Theorem~\ref{fa2} to the decomposition $u=v_1 (v_2s)(sa)$ and obtain $M_{\mathcal R}(u)=v_1 M_{st}(v_2s)sa$. By Lemma \ref{davaw} and Property R5, we have that $M_{st}$ commutes with $\rho_s$, and the result follows. If $s\notin {D}_L(a)$ we can apply directly Theorem~\ref{fa2} to the decomposition $u=v_1v_2 a$ and conclude.
\end{proof}

We can now state the aimed classification theorem of special matchings of lower Bruhat intervals.
\begin{thm}
 Let $(W,S)$ be a Coxeter system and $w\in W$. Then
 \begin{enumerate}
 \item the matching associated with a right or left system of $w$ is special;
  \item a special matching of $w$ is the matching associated with a right or left system of $w$;
  \item if $\mathcal R=(J,s,t,M_{st})$ and $\mathcal R'=(J',s',t',M_{s't'})$ are right systems then $M_\mathcal R=M_{\mathcal R'}$ if and only if $s=s'$, $J\cup C_s=J'\cup C_s$ and one of the following conditions is satisfied:
  \begin{itemize}
   \item $M_{st}(u)=us$ for all $u\leq w_0(s,t)$ and $M_{s't'}(u)=us$ for all $u\leq w_0(s,t')$;
   \item $t=t'$ and $M_{st}=M_{s't'}$
  \end{itemize}
  \item if $\mathcal R=(J,s,t,M_{st})$ is a right system and $\mathcal L=(K,s',t',M_{s't'})$ is a left system then $M_\mathcal R=\,_\mathcal L M$ if and only if $s=s'$, $J\cap K \subseteq C_s$, $J\cup K \subseteq S\setminus C_s$, $M_{st}=\rho_s$, $M_{s't'}=\lambda_s$.
\end{enumerate}
\end{thm}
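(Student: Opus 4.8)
The plan is to handle the four parts in order, observing that the first two are essentially recapitulations of earlier results while the genuine content lies in the uniqueness statements (3) and (4). Part (1) is exactly Theorem~\ref{special} for right systems; for a left system $\mathcal L$ the map ${}_{\mathcal L}M$ is, by definition, the transpose of the right-system matching $M_{\mathcal L}$ of $w^{-1}$, so it is special by Theorem~\ref{special} together with Remark~\ref{oss}, which records that transposition preserves speciality and interchanges left and right systems. Part (2) is Theorem~\ref{caratteri}. Throughout parts (3) and (4) the main tool will be Lemma~\ref{diheequa}: a special matching $M$ with $M(e)=s$ is determined by its restrictions to the dihedral intervals $[e,w_0(s,r)]$, $r\in S$. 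I would combine this with the dichotomy of Theorem~\ref{ma16}: either $M_{\mathcal R}$ is a multiplication matching on every $[e,w_0(s,r)]$ (case (ii)), or there is a \emph{unique} $t$ on which it is not (case (i)).

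For part (3) I would first record how $M_{\mathcal R}$ acts on these dihedral intervals. Applying Theorem~\ref{fa2} with $u_1=u_3=e$ gives $M_{\mathcal R}(u)=M_{st}(u)$ for every $u\in W_{\{s,t\}}$ with $u\le w$, so $M_{\mathcal R}$ restricts to $M_{st}$ on $[e,w_0(s,t)]$; while for $r\ne t$ one reads off from the image of the generator $r$ (namely $M_{\mathcal R}(r)=sr$ if $r\in J$ and $M_{\mathcal R}(r)=rs$ otherwise) that the restriction to $[e,w_0(s,r)]$ is a multiplication matching whose handedness depends only on whether $r\in J$, hence only on $J\cup C_s$ (for $r\in C_s$ the two multiplication matchings coincide). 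For the ``only if'' direction, $s=M_{\mathcal R}(e)=M_{\mathcal R'}(e)=s'$, and comparing the images of the generators not in $C_s$ yields $J\cup C_s=J'\cup C_s$. One then splits according to whether $M_{st}=\rho_s$: if so, the common matching lies in case (ii), so $M_{\mathcal R}(u)=u^Jsu_J$ and the corollary to Theorem~\ref{fa1} forces $M_{s't'}(u)=us$ as well, giving the first alternative; if not, $M_{\mathcal R}$ is in case (i), the uniqueness of the non-multiplication direction forces $t=t'$, and restricting to $[e,w_0(s,t)]$ gives $M_{st}=M_{s't'}$, the second alternative. The ``if'' direction is the converse: the first alternative is precisely the corollary to Theorem~\ref{fa1}, and for the second I would check from the description above that $M_{\mathcal R}$ and $M_{\mathcal R'}$ have equal restrictions to every $[e,w_0(s,r)]$ and conclude by Lemma~\ref{diheequa}.

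For part (4), the heart of the matter, I would again begin with $s=s'$ (read off from $M(e)$). The crucial obstruction is to show that a matching realized simultaneously as $M_{\mathcal R}$ and as ${}_{\mathcal L}M$ cannot be genuinely dihedral, i.e.\ that $M_{st}=\rho_s$ and $M_{s't'}=\lambda_s$. Here I would use Remark~\ref{oss}: from $M_{\mathcal R}={}_{\mathcal L}M$ it follows that the transpose $\tilde M$ is at once a right-system matching of $w^{-1}$ (namely $M_{\mathcal L}$) and a left-system matching. Suppose $M_{st}\ne\rho_s$. Then $M$ is in case (i), genuinely non-multiplication on $[e,w_0(s,t)]$ with $M(t)=ts$; passing to $\tilde M$ keeps this interval non-multiplication but sends $t$ to $(M(t))^{-1}=st$. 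On the other hand, as a right-system matching $\tilde M$ must send its unique non-multiplication direction $r$ to $rs$, and uniqueness forces that direction to be $t$; thus $ts=\tilde M(t)=st$, whence $t\in C_s$, contradicting $m_{s,t}\ge 3$. Hence $M_{st}=\rho_s$, and the same argument applied to $\tilde M$ yields $M_{s't'}=\lambda_s$. Once both matchings are of multiplication type, so that $M_{\mathcal R}(u)=u^Jsu_J$ and ${}_{\mathcal L}M(u)={}_Ku\,s\,{}^Ku$, I would extract the conditions relating $J$, $K$ and $C_s$ by comparing the images of the generators: for $r\notin C_s$ one has $rs\ne sr$, which pins down whether $r$ lies in $J$ or in $K$, forcing $J\cap K\subseteq C_s$ and determining $J\cup K$ modulo $C_s$; the freedom to alter $J$ and $K$ by elements of $C_s$ (the corollary to Theorem~\ref{fa1}) accounts for the role of $C_s$ in the stated conditions. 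The converse ``if'' direction is then a direct verification via Lemma~\ref{diheequa}: for $r\in C_s$ the matchings $\rho_s$ and $\lambda_s$ agree on $[e,w_0(s,r)]$, while for $r\notin C_s$ the hypotheses on $J$ and $K$ guarantee that $M_{\mathcal R}$ and ${}_{\mathcal L}M$ restrict to the same multiplication matching.

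The step I expect to be the main obstacle is this incompatibility argument in part (4): proving that a single special matching cannot be genuinely (non-multiplication) dihedral when presented as both a right and a left system. The bookkeeping of the transpose operation of Remark~\ref{oss}, combined with the uniqueness clause of Theorem~\ref{ma16}, is what makes it work, but one must treat carefully the degenerate possibility that $\tilde M$ falls into case (ii) rather than case (i)---in which case it would be a multiplication matching on \emph{all} dihedral intervals, directly contradicting its non-multiplication behaviour on $[e,w_0(s,t)]$. By contrast, parts (1)--(3) are comparatively routine once Theorem~\ref{fa2} and Lemma~\ref{diheequa} are in hand.
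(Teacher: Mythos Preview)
Your proposal is correct and follows essentially the same approach as the paper, which simply records that the theorem ``follows immediately from Lemma~\ref{diheequa}, Theorem~\ref{caratteri}, Corollary~\ref{charpar}, and Theorem~\ref{special}''; your write-up is a faithful unpacking of that one-line proof. The only point worth noting is that your transpose argument in part~(4) via Remark~\ref{oss} and the uniqueness clause of Theorem~\ref{ma16}, while valid, is slightly more circuitous than necessary: the direct computation of dihedral restrictions you already carried out for part~(3) (namely that $M_{\mathcal R}\vert_{[e,w_0(s,r)]}$ is $\lambda_s$ or $\rho_s$ according as $r\in J$ or $r\notin J$, for $r\neq t$) applies equally to ${}_{\mathcal L}M$ and yields the conclusions $M_{st}=\rho_s$, $M_{s't'}=\lambda_s$ and the conditions on $J,K$ by simple comparison on each $[e,w_0(s,r)]$, with no detour through $\tilde M$.
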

\begin{proof}
 This follows immediately from Lemma \ref{diheequa}, Theorem \ref{caratteri}, Corollary \ref{charpar}, and Theorem \ref{special}.
\end{proof}
\begin{ack}
 We thank the referees for their comments and their careful reading of the manuscript.
\end{ack}

\end{document}